\documentclass[12pt]{amsart}

\usepackage[margin=1in]{geometry}
\usepackage{amsmath,amsthm,amssymb}
\usepackage{dsfont}
\usepackage{color}
\usepackage{hyperref}

\hypersetup{
    colorlinks=true, 
    linkcolor=blue, 
    urlcolor=red, 
    linktoc=all 
}

\input xy
\xyoption{all}

\newcommand{\C}{\mathbb{C}}
\newcommand{\PP}{\mathbb{P}}
\newcommand{\Z}{\mathbb{Z}}
\newcommand{\one}{\mathds{1}}
\newcommand{\G}{\mathcal{G}}
\newcommand{\map}{\textrm{Map}}
\newcommand{\sm}[1]{\langle{#1}\rangle}
\newcommand{\KK}{\tilde{K}}
\newcommand{\HH}[1]{\tilde{H}^{#1}}

\theoremstyle{plain}
\newtheorem{thm}{Theorem}[section]

\newtheorem{lemma}[thm]{Lemma}
\newtheorem{cor}[thm]{Corollary}


\begin{document}

\title{Homotopy types of $SU(n)$-gauge groups over non-spin 4-manifolds}
\author{Tseleung So}
\address{Mathematical Sciences, University of Southampton, SO17 1BJ, UK}
\email{tls1g14@soton.ac.uk}
\thanks{}

\subjclass[2010]{Primary 55P15; Secondary 54C35, 81T13}

\keywords{gauge groups, homotopy type, non-spin 4-manifolds}

\date{}

\begin{abstract}
Let $M$ be an orientable, simply-connected, closed, non-spin 4-manifold and let~$\G_k(M)$ be the gauge group of the principal $G$-bundle over $M$ with second Chern class~\mbox{$k\in\Z$}. It is known that the homotopy type of $\G_k(M)$ is determined by the homotopy type of~$\G_k(\C\PP^2)$. In this paper we investigate properties of $\G_k(\C\PP^2)$ when $G=SU(n)$ that partly classify the homotopy types of the gauge groups.
\end{abstract}

\maketitle

\section{Introduction}
Let $G$ be a simple, simply-connected, compact Lie group and let $M$ be an orientable, simply-connected, closed 4-manifold. Then a principal $G$-bundle $P$ over $M$ is classified by its second Chern class $k\in\Z$. The associated gauge group $\G_k(M)$ is the topological group of~$G$-equivariant automorphisms of $P$ which fix~$M$.

When $M$ is a spin 4-manifold, topologists have been studying the homotopy types of gauge groups over $M$ extensively over the last twenty years. On the one hand, Theriault showed that \cite{theriault10a} there is a homotopy equivalence
\[
\G_k(M)\simeq\G_k(S^4)\times\prod^d_{i=1}\Omega^2G,
\]
where $d$ is the second Betti number of $M$. Therefore to study the homotopy type of $\G_k(M)$ it suffices to study $\G_k(S^4)$. On the other hand, many cases of homotopy types of $\G_k(S^4)$'s are known. For examples, there are 6 distinct homotopy types of $\G_k(S^4)$'s for $G=SU(2)$ \cite{kono91}, and 8 distinct homotopy types for $G=SU(3)$ \cite{HK06}. When localized rationally or at any prime, there are 16 distinct homotopy types for $G=SU(5)$ \cite{theriault15} and 8 distinct homotopy types for $G=Sp(2)$ \cite{theriault10b}.

When $M$ is a non-spin 4-manifold, the author showed that \cite{so16} there is a homotopy equivalence
\[
\G_k(M)\simeq\G_k(\C\PP^2)\times\prod^{d-1}_{i=1}\Omega^2G,
\]
so the homotopy type of $\G_k(M)$ depends on the special case $\G_k(\C\PP^2)$. Compared to the extensive work on $\G_k(S^4)$, only two cases of $\G_k(\C\PP^2)$ have been studied, which are the~$SU(2)$- and $SU(3)$-cases \cite{KT96, theriault12}. As a sequel to \cite{so16}, this paper investigates the homotopy types of~$\G_k(\C\PP^2)$'s in order to explore gauge groups over non-spin 4-manifolds.

A common approach to classifying the homotopy types of gauge groups is as follows. Atiyah, Bott and Gottlieb \cite{AB83, gottlieb72} showed that the classifying space $B\G_k(M)$ is homotopy equivalent to the connected component $\map_k(M, BG)$ of the mapping space $\map(M, BG)$ containing the map $k\alpha\circ q$, where $q:M\to S^4$ is the quotient map and $\alpha$ is a generator of~$\pi_4(BG)\cong\Z$. The evaluation map $ev:B\G_k(M)\to BG$ induces a fibration sequence
\begin{equation}\label{fib_Gk(M) ev}
\G_k(M)\longrightarrow G\overset{\partial_k}{\longrightarrow}\map^*_0(M, BG)\longrightarrow B\G_k(M)\overset{ev}{\longrightarrow}BG.
\end{equation}
For $M=S^4$, the order of $\partial_1:G\to\Omega^3_0G$ helps determine the classification of $\G_k(S^4)$'s by the following theorem. The first part is due to~\cite{theriault10b} and the second is due to~\cite{KKT14}.

\begin{thm}[Theriault, \cite{theriault10b}; Kishimoto, Kono, Tsutaya \cite{KKT14}]\label{thm_counting lemma S4}
Let $m$ be the order of $\partial_1$. Denote the $p$-component of $a$ by $\nu_p(a)$ and the greatest common divisor of $a$ and $b$ by $(a, b)$.
\begin{enumerate}
\item
If $(m,k)=(m,l)$, then $\G_k(S^4)$ is homotopy equivalent to $\G_l(S^4)$ when localized rationally or at any odd prime.
\item
If  $\G_k(S^4)$ is homotopy equivalent to $\G_l(S^4)$ and $G$ is of low rank (for details please see~\cite{KKT14}), then $\nu_p(m,k)=\nu_p(m,l)$ for any odd prime $p$.
\end{enumerate}
\end{thm}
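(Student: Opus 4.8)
The plan is to treat the two parts separately. Throughout, recall from \eqref{fib_Gk(M) ev} that $\G_k(S^4)$ is the homotopy fibre of a map $\partial_k\colon G\to\map^*_0(S^4,BG)$, that $\map^*_0(S^4,BG)\simeq\Omega^3_0 G$, and that $m$ is finite (rationally the evaluation fibration is independent of $k$, so $\partial_1$ is already nullhomotopic). I will also use the standard fact that $\partial_k\simeq k\,\partial_1$, where the $k$-fold multiple is formed in the abelian group $[G,\Omega^3_0 G]$ (abelian because $\Omega^3_0 G$ is a double loop space); this follows from the bilinearity of the Whitehead/Samelson product together with $k\alpha=k\cdot\alpha$ in $\pi_4(BG)$.

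For part (1), fix an odd prime $p$ and localise there (the rational case is identical and easier, since then $\partial_1$ is nullhomotopic). After localisation $\partial_1$ has order $\nu_p(m)$ in $[G,(\Omega^3_0 G)_{(p)}]$, and, writing $\nu_p(m,k)$ for the $p$-component of $(m,k)$, which equals $(\nu_p(m),\nu_p(k))$, the hypothesis reads $(\nu_p(m),\nu_p(k))=(\nu_p(m),\nu_p(l))$. If $\nu_p(m)$ divides $\nu_p(k)$ then $\partial_k\simeq_{(p)}k\,\partial_1$ is nullhomotopic, and the hypothesis forces the same for $\partial_l$, so $\G_k(S^4)$ and $\G_l(S^4)$ are $p$-locally the fibres of nullhomotopic maps and hence equivalent. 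Otherwise $\nu_p(k)=\nu_p(l)$, and one can choose an integer $v$ prime to $p$ with $l\equiv vk\pmod{\nu_p(m)}$; then $p$-locally $\partial_l=v\,\partial_k=\lambda_v\circ\partial_k$, where $\lambda_v\colon\Omega^3_0 G\to\Omega^3_0 G$ is the $v$-th power map, a $p$-local homotopy equivalence since $v$ acts invertibly on $p$-local homotopy groups. Post-composing with an equivalence does not change the homotopy fibre, so again $\G_k(S^4)\simeq_{(p)}\G_l(S^4)$. Running this over all odd primes and rationally yields part (1); nothing is asserted at $p=2$, where $\lambda_v$ need not be an equivalence.

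Part (2) is the substantial direction: the goal is to realise the $p$-component $\nu_p(m,k)=(\nu_p(m),\nu_p(k))$ as a $p$-local homotopy invariant of $\G_k(S^4)$. The strategy is to recover from the homotopy type of $\G_k(S^4)$ the $p$-local order of $\partial_k$, namely $\nu_p(m)/(\nu_p(m),\nu_p(k))$, which determines $(\nu_p(m),\nu_p(k))$ once $\nu_p(m)$ is known. One reads this order off the fibration $\G_k(S^4)\to G\xrightarrow{\partial_k}\Omega^3_0 G$: restricting along the bottom cell $S^3\hookrightarrow G$, the induced homomorphism $\pi_3(G)=\Z\to\pi_3(\Omega^3_0 G)=\pi_6(G)$ is $k$ times a fixed class whose order $m'$ divides $m$, so has order $m'/(m',k)$; here the low-rank hypothesis is used to guarantee $\nu_p(m')=\nu_p(m)$ for odd $p$ (the relevant odd torsion is already detected on the bottom cell), with data from a few further skeleta added if needed. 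What then remains is to show that a homotopy equivalence $\G_k(S^4)\simeq\G_l(S^4)$ transports all of this.

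The main obstacle, and the part I expect to contain the real work, is exactly that last step. A homotopy equivalence of the total spaces $\G_k(S^4)$ and $\G_l(S^4)$ need not respect the maps to $G$ or the fibration structure, so one must show it does so up to the action of the self-homotopy-equivalence groups of $G$ and of $\Omega^3_0 G$, and then that this action can only rescale $\partial_k$ by a $p$-local unit on the pertinent homotopy groups, leaving $(\nu_p(m),\nu_p(k))$ unchanged. Determining these self-equivalence groups — equivalently, controlling $[G,G]$ and the low-dimensional Postnikov data — is precisely what forces the low-rank restriction in \cite{KKT14}; by comparison the forward direction and all the $\nu_p$-bookkeeping are routine.
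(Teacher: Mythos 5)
This theorem is quoted by the paper from \cite{theriault10b} and \cite{KKT14} without proof, so I am judging your attempt against the standard arguments and against the $\C\PP^2$ analogues the paper does prove (Section 2 for part (1), Section 5 for part (2)). Your part (1) is correct and is essentially the standard proof: $\partial_k\simeq k\partial_1$ by linearity of the Samelson product, and in the case $\nu_p(k)=\nu_p(l)<\nu_p(m)$ you realise $\partial_l$ as a $p$-local self-equivalence (a power map by a $p$-local unit $v$) composed with $\partial_k$, so the homotopy fibres agree. This is the same mechanism as the paper's Theorem 2.4/2.5. (Minor remark: since $v$ can be chosen odd, this argument in fact works at $p=2$ as well; the restriction to odd primes in the statement is not forced by your proof.)

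Part (2) contains a genuine gap. You correctly isolate the difficulty --- a homotopy equivalence $\G_k(S^4)\simeq\G_l(S^4)$ need not respect the fibrations over $G$ --- but the remedy you propose, namely determining the self-equivalence groups of $G$ and $\Omega^3_0G$ and showing they can only rescale $\partial_k$ by $p$-local units, is left entirely unexecuted (you say so yourself), and it is also not the route that succeeds. The argument in \cite{KKT14}, and likewise the paper's own proof of the $\C\PP^2$ analogue (Theorem~\ref{thm_necessary condition} in Section 5), sidesteps the transport problem completely: one applies $[A,-]$ for a suitable finite complex $A$ to the fibration sequence $\Omega^3_0G\to\G_k(S^4)\to G\overset{\partial_k}{\to}\Omega^3_0G$ and computes the resulting homotopy set $[A,\G_k(S^4)]$, whose cardinality is controlled by $|Im(\partial_k)_*|$ and hence by $(m,k)$. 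Since $[A,-]$ is automatically preserved by any homotopy equivalence of gauge groups, no control over self-equivalences or over the fibration structure is needed; the low-rank hypothesis enters only in making $[A,G]$ and $[A,\Omega^3_0G]$ computable and in ensuring the resulting invariant actually detects $\nu_p(m,k)$ for odd $p$. Your sketch of reading $\nu_p(m)$ off the bottom cell $S^3\hookrightarrow G$ is likewise asserted rather than proved. As written, part (2) is a plan with its decisive step missing, and the plan points in a harder direction than the one that works.
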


Therefore the classification problem reduces to calculating the order $m$ of $\partial_1$. Known examples are $m=12$ for $G=SU(2)$ \cite{kono91}, $m=24$ for $G=SU(3)$ \cite{HK06}, $m=120$ for $G=SU(5)$ \cite{theriault15} and $m=40$ for $G=Sp(2)$ \cite{theriault10b}. For most cases of $G$, the exact value of $m$ is difficult to compute, but we are still able to obtain partial results. When $G$ is $SU(n)$, the order of $\partial_1$ and $n(n^2-1)$ have the same odd primary components if $n<(p-1)^2+1$ \cite{KKT14, theriault17}. Moreover, Hamanaka and Kono showed a necessary condition $(n(n^2-1),k)=(n(n^2-1),l)$ for a homotopy equivalence $\G_k(S^4)\simeq\G_l(S^4)$ \cite{HK06}.

In this paper we consider gauge groups over $\C\PP^2$. Take $M=\C\PP^2$ in~(\ref{fib_Gk(M) ev}) and denote the boundary map by $\partial'_k:G\to\map^*_0(\C\PP^2, BG)$. Since $\map^*_0(\C\PP^2, BG)$ is not an H-space,~$[G,\map^*_0(\C\PP^2,BG)]$ is not a group so the order of $\partial'_k$ makes no sense. However, we can still define an ``order'' of $\partial'_k$ \cite{theriault12}, which will be mentioned in Section~2. We show that the ``order'' of $\partial'_1$ helps determine the homotopy type of $\G_k(\C\PP^2)$ like part~(1) of Theorem~\ref{thm_counting lemma S4}.

\begin{thm}
Let $m'$ be the ``order'' of $\partial'_1$. If $(m', k)=(m', l)$, then $\G_k(\C\PP^2)$ is homotopy equivalent to $\G_l(\C\PP^2)$ when localized rationally or at any prime.
\end{thm}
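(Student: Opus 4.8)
The plan is to realize $\G_k(\C\PP^2)$ as the homotopy fibre of $\partial'_k$ and to prove that, after localization at any prime $p$, the condition $(m',k)=(m',l)$ forces the homotopy fibres of $\partial'_k$ and $\partial'_l$ to be homotopy equivalent; the rational statement will be automatic. The inputs I would recall from Section~2 are: $\partial'_k\simeq q^*\circ\partial_k$, where $q\colon\C\PP^2\to S^4$ is the pinch map and $\partial_k\colon G\to\Omega^3_0G$ is the boundary map over $S^4$; the descriptions $\partial_k\simeq k\partial_1$ in the group $[G,\Omega^3_0G]$ and $\partial_k\simeq\partial_1\circ\phi_k$, where $\phi_k\colon G\to G$ is the $k$-th power map; and the fact that the fibration $\Omega^3_0G\xrightarrow{q^*}\map^*_0(\C\PP^2,BG)\to\Omega G$ coming from the cofibration $\C\PP^1\hookrightarrow\C\PP^2\xrightarrow{q}S^4$, together with its connecting map $\delta\colon\Omega^2G\to\Omega^3_0G$, identifies $\{\,k:\partial'_k\simeq\ast\,\}$ as a subgroup $m'\Z$ of $\Z$ — so that $m'$ is the ``order'' of $\partial'_1$, $m'\mid m$, and the homotopy class of $\partial'_k$ depends only on the residue of $k$ modulo $m'$.

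The point that distinguishes this from the $S^4$ counting lemma (part~(1) of Theorem~\ref{thm_counting lemma S4}) is that $\map^*_0(\C\PP^2,BG)$ is not an $H$-space, so one cannot postcompose $\partial'_k$ with a power map on the target; instead one precomposes with a power map on $G$, which is available thanks to the identity $\partial'_k\simeq\partial'_1\circ\phi_k$ and the relation $\phi_{kw}=\phi_k\circ\phi_w$. The one input special to $G=SU(n)$ is that, since $H^*(SU(n);\Z)$ is an exterior algebra on odd-degree primitive generators, $\phi_u$ acts by multiplication by $u$ on each generator and is therefore a $p$-local homotopy equivalence whenever $(u,p)=1$. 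Now fix a prime $p$, localize everything at $p$, and write $r=\nu_p(m')$ and $s=\nu_p(m',k)=\nu_p(m',l)$; then $s\mid r$, and unless $s=r$ one has $s=\nu_p(k)=\nu_p(l)$. If $s=r$, then $r\mid k$ and $r\mid l$; since $p$-locally the homotopy class of $\partial'_k$ depends only on $k$ modulo $r$, we get $\partial'_k\simeq\partial'_l\simeq\ast$ $p$-locally, and both gauge groups are $p$-locally homotopy equivalent to $G\times\Omega\map^*_0(\C\PP^2,BG)$. If $s<r$, write $k=su$ and $l=sv$ with $u,v$ coprime to $p$ and choose an integer $w$ coprime to $p$ with $uw\equiv v\pmod{r/s}$, so that $kw\equiv l\pmod r$; then $\partial'_{kw}\simeq\partial'_l$ $p$-locally, while $\partial'_{kw}\simeq\partial'_k\circ\phi_w$ with $\phi_w$ a $p$-local self-equivalence of $G$, whence
\[
\G_l(\C\PP^2)\simeq\mathrm{fib}(\partial'_l)\simeq\mathrm{fib}(\partial'_{kw})=\mathrm{fib}(\partial'_k\circ\phi_w)\simeq\mathrm{fib}(\partial'_k)\simeq\G_k(\C\PP^2)
\]
$p$-locally, using that homotopic maps have homotopy equivalent fibres and that precomposition with a homotopy equivalence preserves the homotopy fibre.

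For the rational statement, $\partial_1$ has finite order $m$, hence is rationally null-homotopic, hence $\partial'_k=q^*\circ\partial_k$ is rationally null for every $k$, so $\G_k(\C\PP^2)$ is rationally homotopy equivalent to $G\times\Omega\map^*_0(\C\PP^2,BG)$ for all $k$; combined with the $p$-local argument at every prime this proves the theorem. The step I expect to cost the most effort is the one packed into the ``inputs'': checking that the fibration from $\C\PP^1\hookrightarrow\C\PP^2\to S^4$ and its connecting map genuinely identify $\{\,k:\partial'_k\simeq\ast\,\}$ with a subgroup $m'\Z$ (so that $m'$ is well defined and $\partial'_k$ depends only on $k\bmod m'$), and establishing $\partial'_k\simeq\partial'_1\circ\phi_k$ — it is precisely this last identity that lets the $p$-local equivalence $\phi_w$ be inserted as a precomposition with $G$, which is what replaces the ``postcompose with a power map on the target'' step that is available over $S^4$.
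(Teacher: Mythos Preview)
Your argument is correct but takes a genuinely different route from the paper's. The paper does not re-run the ``power map on $G$'' counting argument at all; instead it bootstraps from Theriault's earlier result (Theorem~\ref{thm_stephen counting lemma}, which already gives the conclusion under the stronger hypothesis $(m,k)=(m,l)$) via Lemma~\ref{lemma_partial'}, which says $m\in\{m',2m'\}$. At odd primes and rationally one has $(m,k)=(m',k)$, so Theorem~\ref{thm_stephen counting lemma} applies directly; only the case $m=2m'$ at the prime $2$ needs a separate short argument, where the paper checks by hand that if $(m',k)=(m',l)=m'$ then both $\partial'_k$ and $\partial'_l$ are null and the gauge groups split as $G\times\map^*_0(\C\PP^2,G)$.

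By contrast, you essentially reprove the counting lemma from scratch with $m'$ in place of $m$: the key observation that $\partial'_k$ depends only on $k\bmod m'$ (equivalently, that $\partial'_k=k\partial'_1$ in the abelian group $Im(q^*)$ of Lemma~\ref{lemma_dfn Im group}) lets you feed the identity $\partial'_k\simeq\partial'_1\circ\phi_k$ directly into the standard $p$-local ``choose $w$ with $kw\equiv l\pmod{\nu_p(m')}$'' trick. This is more self-contained and uniform across primes---no special $2$-primary case---at the cost of redoing machinery the paper simply quotes. One minor point: you flag the $p$-local invertibility of $\phi_w$ as ``special to $G=SU(n)$,'' but the theorem is stated for arbitrary simple simply-connected compact $G$, and the fact that $\phi_w$ is a $p$-local equivalence when $(w,p)=1$ holds in that generality (rationally $G$ is a product of odd spheres and $\phi_w$ acts by $w$ on primitive generators); dropping that restriction your argument proves the full statement.
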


In Section 4, we study the $SU(n)$-gauge groups over $\C\PP^2$ and use unstable $K$-theory to give a lower bound on the ``order'' of $\partial'_1$. 

\begin{thm}\label{thm_main thm}
When $G$ is $SU(n)$, the ``order'' of $\partial'_1$ is at least $\frac{1}{2}n(n^2-1)$ for $n$ odd, and~$n(n^2-1)$ for $n$ even.
\end{thm}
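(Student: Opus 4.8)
\medskip
\noindent\textbf{Proof strategy.}\quad The plan is to convert the statement into a non-triviality assertion about a single map, detect that map with unstable $K$-theory, and reduce to a divisibility computation. Recall from Section~2 that the ``order'' $m'$ of $\partial'_1$ is set up via the relation $\partial'_k\simeq\partial'_1\circ\phi_k$, where $\phi_k\colon SU(n)\to SU(n)$ is the $k$-th power map; so it suffices to prove that $\partial'_r=\partial'_1\circ\phi_r$ is not null-homotopic for every $r$ with $1\le r<\tfrac12 n(n^2-1)$ ($n$ odd), resp. $1\le r<n(n^2-1)$ ($n$ even). Passing to adjoints, $\partial'_1$ corresponds to a map $\widehat{\partial'_1}\colon SU(n)\wedge\C\PP^2\to BSU(n)$; and because the bundle over $\C\PP^2$ with second Chern class $1$ is pulled back along the pinch map $q\colon\C\PP^2\to S^4$ from the one over $S^4$, we have $\widehat{\partial'_1}\simeq\widehat{\partial_1}\circ(1\wedge q)$, where $\widehat{\partial_1}\colon SU(n)\wedge S^4\to BSU(n)$ adjoins the $S^4$-boundary $\partial_1$ --- essentially the Samelson product of $1_{SU(n)}$ with the bottom-cell inclusion $\epsilon\colon S^3\hookrightarrow SU(n)$.

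Since $\widehat{\partial_1}$ is a Samelson product it is additive up to homotopy in its $SU(n)$-variable, so $\widehat{\partial_1}\circ(\phi_r\wedge1)=r\,[\widehat{\partial_1}]$ in the abelian group $[\,SU(n)\wedge S^4,\,BSU(n)\,]=[\,\Sigma^4 SU(n),\,BSU(n)\,]$, and likewise $\partial'_r$ adjoins $(1\wedge q)^*(r\,[\widehat{\partial_1}])$. The cofibre sequence $S^2\to\C\PP^2\xrightarrow{q}S^4$, smashed with $SU(n)$, produces an exact sequence whose upshot is that $(1\wedge q)^*$ has kernel exactly the subgroup of ``$\eta$-multiples'', i.e. the image of composition with $1_{SU(n)}\wedge\Sigma\eta\colon SU(n)\wedge S^4\to SU(n)\wedge S^3$. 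Hence $\partial'_r\simeq*$ precisely when $r\,[\widehat{\partial_1}]$ is an $\eta$-multiple, and the goal reduces to showing that $r\,[\widehat{\partial_1}]$ is neither zero nor an $\eta$-multiple for $r$ below the stated bound.

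To detect this I would use unstable $K$-theory. The class $\widehat{\partial_1}$ is torsion and therefore invisible in the torsion-free group $\widetilde K^0(SU(n)\wedge\C\PP^2)$; the detection must take place inside the unstable groups $[\,\Sigma^4 SU(n),\,BSU(n)\,]$ and $[\,SU(n)\wedge\C\PP^2,\,BSU(n)\,]$ themselves. I would compute these from the cell structures --- $SU(n)$ built from $S^3,S^5,\dots,S^{2n-1}$ and their products, $\C\PP^2=S^2\cup_\eta e^4$ --- using the exact sequences for unstable $K$-groups attached to the fibrations $SU(n-1)\to SU(n)\to S^{2n-1}$ and $SU(n)\to SU\to SU/SU(n)$ (the Hamanaka--Kono-type machinery), whose input data are $K^*(SU(n))=\Lambda(\beta_1,\dots,\beta_{n-1})$ and $K^*(BSU(n))$, the completed representation ring generated by reduced exterior powers (hence governed by the binomial coefficients $\binom{n}{i}$). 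Locating $[\widehat{\partial_1}]$ and the subgroup of $\eta$-multiples turns the requirement above into an explicit divisibility condition on $r$, and taking the appropriate greatest common divisor of the resulting coefficients over all generators produces the integer $n(n^2-1)$, parallel to the $S^4$ divisibility behind the condition $(n(n^2-1),k)=(n(n^2-1),l)$. At an odd prime $p$ nothing is lost: there $(\C\PP^2)_{(p)}\simeq S^2_{(p)}\vee S^4_{(p)}$, the $S^4$-summand splits off, the $\eta$-multiple ambiguity disappears, and $\nu_p(m')$ equals the $p$-component of the order of $\partial_1$ over $S^4$. At the prime $2$ the non-splitting of $\C\PP^2$ and the presence of $\eta$ let the argument recover the full $2$-primary divisibility when $n$ is even but force the loss of one factor of $2$ when $n$ is odd (exactly when $8\mid n^2-1$); assembling the local bounds yields the claimed estimate.

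I expect the main obstacle to be the unstable $K$-theory computation itself. Unlike $\widetilde K^0$, the groups $[\,\Sigma^4 SU(n),\,BSU(n)\,]$ and $[\,SU(n)\wedge\C\PP^2,\,BSU(n)\,]$ are reachable only through long exact sequences with genuine extension problems, and the $\eta$-attaching data of $\C\PP^2$ must be carried through all of them while keeping track of the binomial coefficients from $K^*(BSU(n))$. The secondary delicate point is the $2$-local analysis: deciding exactly how much divisibility survives there is what separates the odd-$n$ and even-$n$ statements, and pinning it down requires a careful $2$-local study of $\widehat{\partial_1}$ together with $\eta$.
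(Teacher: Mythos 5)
Your toolkit is the right one (the fibration $SU(n)\to SU(\infty)\to W_n$, Hamanaka--Kono unstable $K$-theory, and the cofibration bookkeeping identifying $\ker q^*$ with the $\eta$-multiples), but the plan has a genuine gap: you propose to locate $r[\widehat{\partial_1}]$ inside $[\Sigma^4 SU(n),BSU(n)]$ and $[SU(n)\wedge\C\PP^2,BSU(n)]$, and these groups are not accessible by the machinery you cite. The Hamanaka--Kono method computes $[X,SU(n)]$ only when $\dim X\leq 2n+2$, so that the relevant skeleton of $W_n$ consists of just the cells in degrees $2n+1$ and $2n+3$ and the two classes $a_{2n},a_{2n+2}$ detect everything; $\Sigma^4 SU(n)$ has dimension $n^2+3$, far outside this range, and carrying ``the $\eta$-attaching data through all the long exact sequences'' for the full flag of cells of $SU(n)$ is exactly the computation nobody knows how to do (it would give the exact order of $\partial'_1$, and even over $S^4$ the exact order of $\partial_1$ is unknown for general $n$). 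The missing idea is to give up on the exact order and restrict $\partial'_1$ along the generating complex $\epsilon\colon\Sigma\C\PP^{n-1}\to SU(n)$: the order of $\partial'_1\circ\epsilon$ is a lower bound for the order of $\partial'_1$, its adjoint $\gamma$ lives in $[\C\PP^2\wedge\C\PP^{n-1},SU(n)]$ with $\dim(\C\PP^2\wedge\C\PP^{n-1})=2n+2$, the group $[\C\PP^2\wedge\C\PP^{n-1},\Omega W_n]\cong\Z^{\oplus 3}$ is free so detection by $(a_{2n},a_{2n+2})$ is injective, and the formula $\Phi(f)=n!\,ch_{2n}(f)\oplus(n+1)!\,ch_{2n+2}(f)$ reduces everything to an explicit lattice computation. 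This restriction is also precisely why the theorem claims only ``at least''.

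A secondary problem is your account of the odd/even dichotomy. You attribute the extra factor of $2$ for $n$ even to a $2$-local analysis of $\eta$ and the non-splitting of $\C\PP^2$; in fact it falls out of a parity constraint in the Diophantine system
\[
m(1,0,1)=s\bigl(\tfrac12 n(n-1),\,n,\,\tfrac12 n(n+1)\bigr)+t\bigl(n(n-1),0,0\bigr)+r(0,2n,0),
\]
where $s=-2r$ must be even while $s=t(n-1)$, forcing $t$ to be even exactly when $n$ is even. The middle coordinate responsible for this comes from the $K$-theory generator with Chern character $(u+\tfrac12 u^2)(e^v-1)^i$, i.e.\ from the class on $\C\PP^2$ whose Chern character has a $u$-term, not from a localization of $\C\PP^2$ at $2$. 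Your heuristic is fitted to the known answer rather than derived, and would in any case have to be substantiated by a computation of this kind.
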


In Section 5, we prove a necessary condition for the homotopy equivalence~\mbox{$\G_k(\C\PP^2)\simeq\G_l(\C\PP^2)$} similar to that in~\cite{HK06}.

\begin{thm}\label{thm_necessary condition}
Let $G$ be $SU(n)$. If $\G_k(\C\PP^2)$ is homotopy equivalent to $\G_l(\C\PP^2)$, then
\[
\begin{cases}
(\frac{1}{2}n(n^2-1),k)=(\frac{1}{2}n(n^2-1),l),	&\text{for $n$ odd};\\
(n(n^2-1),k)=(n(n^2-1),l),							&\text{for $n$ even}.
\end{cases}
\]
\end{thm}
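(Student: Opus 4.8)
I would prove this by adapting to $\C\PP^2$ the strategy Hamanaka and Kono used for $S^4$ in~\cite{HK06}: produce from $\G_k(\C\PP^2)$ a finite homotopy invariant whose order depends on $k$ only through the greatest common divisor in the statement, using the unstable $K$-theory of Section~4 as the engine. Throughout set $F=\map^*_0(\C\PP^2,BSU(n))$ and write $c_n=\tfrac12 n(n^2-1)$ for $n$ odd, $c_n=n(n^2-1)$ for $n$ even.

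Recall the structure. By~(\ref{fib_Gk(M) ev}), $\G_k(\C\PP^2)$ is the homotopy fibre of $\partial'_k\colon SU(n)\to F$, and $\partial'_k=q^*\circ(k\cdot\partial_1)$, where $q\colon\C\PP^2\to S^4$ is the quotient map, $\partial_1\colon SU(n)\to\Omega^3_0 SU(n)$ is the $S^4$-boundary map, the $k$-fold multiple is taken using the loop multiplication on $\Omega^3_0 SU(n)$, and $q^*\colon\Omega^3_0 SU(n)\to F$ is induced by $q$. Because post-composition with a fixed map is additive on $[\Sigma W,-]$ for any $W$, it follows that on homotopy groups --- and more generally on any group $[\Sigma W,-]$ --- one has $(\partial'_k)_*=k\cdot(\partial'_1)_*$, the multiple being formed in the abelian group at hand; that $F$ is not an $H$-space is harmless here, since $[\Sigma W,F]$ is still an honest group. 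The input I would draw from Section~4 is that the proof of Theorem~\ref{thm_main thm} exhibits a degree $j$ in which $(\partial'_1)_*\colon\pi_{j}(SU(n))\to\pi_{j}(F)$ has cyclic image of some order $N$ divisible by $c_n$; the unstable $K$-theory of $\C\PP^2$ is precisely the tool that identifies and computes this image.

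Now feed the fibration $\G_k(\C\PP^2)\to SU(n)\xrightarrow{\partial'_k}F$ into the homotopy exact sequence around degree $j-1$:
\[
\pi_{j}(SU(n))\xrightarrow{(\partial'_k)_*}\pi_{j}(F)\longrightarrow\pi_{j-1}(\G_k(\C\PP^2))\longrightarrow\pi_{j-1}(SU(n))\xrightarrow{(\partial'_k)_*}\pi_{j-1}(F).
\]
Hence $\pi_{j-1}(\G_k(\C\PP^2))$ is an extension of $\ker\big((\partial'_k)_*\colon\pi_{j-1}(SU(n))\to\pi_{j-1}(F)\big)$ by $\operatorname{coker}\big((\partial'_k)_*\colon\pi_{j}(SU(n))\to\pi_{j}(F)\big)$. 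As $(\partial'_k)_*=k\cdot(\partial'_1)_*$, the image of $(\partial'_k)_*$ in degree $j$ is $k$ times a cyclic group of order $N$, hence has index $\gcd(N,k)$ in it, so the cokernel term --- and therefore $\pi_{j-1}(\G_k(\C\PP^2))$ --- contains a cyclic subgroup of order $\gcd(N,k)$. It remains to turn $\gcd(N,k)$ into an honest invariant: with the degree $j$ (and, if convenient, a prime $p$ at which to localize) chosen so that the ambient groups $\pi_{j}(F)$, $\pi_{j}(SU(n))$, $\pi_{j-1}(SU(n))$, $\pi_{j-1}(F)$ are finite with $k$-independent orders and so that the kernel term above is $k$-independent (for instance because $(\partial'_1)_*$ vanishes, or is injective, on the relevant part in degree $j-1$), the order of the finite group $\pi_{j-1}(\G_k(\C\PP^2))$ is a fixed constant times $\gcd(N,k)$. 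A homotopy equivalence $\G_k(\C\PP^2)\simeq\G_l(\C\PP^2)$ then forces $\gcd(N,k)=\gcd(N,l)$, and since $c_n\mid N$ this yields $\gcd(c_n,k)=\gcd(c_n,l)$, which is the assertion.

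The bulk of the work, and the main obstacle, is the Section~4 input: pinning down a degree $j$ and a class on which $\partial'_1$ acts with order divisible by $c_n$. This is where unstable $K$-theory does its job and where the attaching map $\eta$ of the top cell of $\C\PP^2$ enters; in particular the factor $\tfrac12$ for $n$ odd should be traceable to the $2$-torsion contributed by $\eta$, so that $\C\PP^2$ only sees $\tfrac12 n(n^2-1)$ rather than the $n(n^2-1)$ of $S^4$. The secondary difficulty is the clean isolation of the cokernel contribution inside $\pi_{j-1}(\G_k(\C\PP^2))$ --- controlling the $k$-dependence of the kernel term and any extension problem --- so that the order of that homotopy group really does determine $\gcd(N,k)$; the greatest-common-divisor arithmetic at the end is routine.
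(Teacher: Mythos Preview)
Your outline has the right architecture---use the fibration to produce a homotopy invariant of $\G_k(\C\PP^2)$ whose size depends on $k$ through a gcd---but the two places you flag as ``the bulk of the work'' and ``the secondary difficulty'' are not merely technicalities: they are the entire proof, and your sketch does not indicate how to carry them out. In particular, Section~4 does \emph{not} supply a degree $j$ in which $(\partial'_1)_*\colon\pi_j(SU(n))\to\pi_j(F)$ has image of order divisible by $c_n$; what Section~4 computes is the order of $\partial'_1\circ\epsilon$ as an element of $[\Sigma\C\PP^{n-1},F]$, which is information about a map out of a large complex, not about a single homotopy group. There is no obvious way to read off from that calculation a single sphere on which $\partial'_1$ acts with the required order, and the paper itself remarks (end of Section~3) that restricting to too small a test space ``gives a much weaker conclusion than Theorem~\ref{thm_necessary condition}.''

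The paper's proof differs from your plan in exactly this respect: instead of a sphere it uses the two-cell complex $C=\C\PP^{n-1}/\C\PP^{n-3}$ as the test object, applying $[\Sigma C,-]$ to the fibration. The point is that $[\Sigma C,BSU(n)]$ is still in the stable range (so equals $\KK^0(\Sigma C)$ and is free), while $[\C\PP^2\wedge C,SU(n)]$ is large enough to see the full order. The computation then produces not a single cyclic invariant but a \emph{product} of two gcd's---for $n$ odd one finds $|Im(\partial'_k)_*|=\dfrac{\tfrac12 n(n^2-1)}{(\tfrac12 n(n^2-1),k)}\cdot\dfrac{n}{(n,k)}$, and something analogous for $n$ even---and a separate arithmetic lemma (Lemma~\ref{lemma_gcd prime argument}) is needed to extract the single gcd condition from the equality of these products. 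Your plan to ``choose $j$ so that the kernel term is $k$-independent and the extension problem disappears'' is precisely what the choice of $C$ and the explicit Smith-normal-form computation accomplish; without that concrete choice and calculation, the argument does not go through.
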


\section{Some facts about boundary map $\partial'_1$}
Take $M$ to be $S^4$ and $\C\PP^2$ respectively in fibration~(\ref{fib_Gk(M) ev}) to obtain fibration sequences
\begin{equation}\label{fib_Gk(S4)}
\G_k(S^4)\longrightarrow G\overset{\partial_k}{\longrightarrow}\Omega^3_0G\longrightarrow B\G_k(S^4)\overset{ev}{\longrightarrow}BG
\end{equation}
\begin{equation}\label{fib_Gk(CP2)}
\G_k(\C\PP^2)\longrightarrow G\overset{\partial'_k}{\longrightarrow}\map^*_0(\C\PP^2,BG)\longrightarrow B\G_k(\C\PP^2)\overset{ev}{\longrightarrow}BG.
\end{equation}
There is also a cofibration sequence
\begin{equation}\label{cofib_CP2}
S^3\overset{\eta}{\longrightarrow}S^2\longrightarrow\C\PP^2\overset{q}{\longrightarrow}S^4,
\end{equation}
where $\eta$ is Hopf map and $q$ is the quotient map. Due to the naturality of $q^*$, we combine fibrations~(\ref{fib_Gk(S4)}) and~(\ref{fib_Gk(CP2)}) to obtain a commutative diagram of fibration sequences
\begin{equation}\label{digm_dfn of tilde partial}
\xymatrix{
\G_k(S^4)\ar[d]^{q^*}\ar[r]	&G\ar@{=}[d]\ar[r]^{\partial_k}	&\Omega^3_0G\ar[d]^{q^*}\ar[r]		&B\G_k(S^4)\ar[d]^{q^*}\ar[r]	&BG\ar@{=}[d]\\
\G_k(\C\PP^2)\ar[r]			&G\ar[r]^-{\partial'_k}			&\map^*_0(\C\PP^2, BG)\ar[r]		&B\G_k(\C\PP^2)\ar[r]			&BG
}
\end{equation}
It is known that \cite{lang73} $\partial_k$ is triple adjoint to Samelson product
\[
\sm{k\imath,\one}:S^3\wedge G\overset{k\imath\wedge\one}{\longrightarrow}G\wedge G\overset{\sm{\one,\one}}{\longrightarrow}G,
\]
where $\imath:S^3\to SU(n)$ is the inclusion of the bottom cell and $\sm{\one,\one}$ is the Samelson product of the identity on $G$ with itself. The order of $\partial_k$ is its multiplicative order in the group~$[G, \Omega^3_0G]$.

Unlike $\Omega^3_0G$, $\map^*_0(\C\PP^2, BG)$ is not an H-space, so $\partial'_k$ has no order. In \cite{theriault12}, Theriault defined the ``order'' of $\partial'_k$ to be the smallest number $m'$ such that the composition
\[
G\overset{\partial_k}{\longrightarrow}\Omega^3_0G\overset{m'}{\longrightarrow}\Omega^3_0G\overset{q^*}{\longrightarrow}\map^*_0(\C\PP^2, BG)
\]
is null homotopic. In the following, we interpret the ``order'' of $\partial'_k$ as its multiplicative order in a group contained in $[\C\PP^2\wedge G, BG]$.

Apply $[-\wedge G, BG]$ to cofibration~(\ref{cofib_CP2}) to obtain an exact sequence of sets
\[
[\Sigma^3G, BG]\overset{(\Sigma\eta)^*}{\longrightarrow}[\Sigma^4G, BG]\overset{q^*}{\longrightarrow}[\C\PP^2\wedge G, BG].
\]
All terms except $[\C\PP^2\wedge G, BG]$ are groups and $(\Sigma\eta)^*$ is a group homomorphism since $\Sigma\eta$ is a suspension. We want to refine this exact sequence so that the last term is replaced by a group. Observe that $\C\PP^2$ is the cofiber of $\eta$ and so there is a coaction $\psi:\C\PP^2\to\C\PP^2\vee S^4$. We show that the coaction gives a group structure on $Im(q^*)$.

\begin{lemma}\label{lemma_dfn Im group}
Let $Y$ be a space and let $A\overset{f}{\to}B\overset{g}{\to}C\overset{h}{\to}\Sigma A$ be a cofibration sequence. If $\Sigma A$ is homotopy cocommutative, then $Im(h^*)$ is an abelian group and
\[
[\Sigma B, Y]\overset{(\Sigma f)^*}{\longrightarrow}[\Sigma A, Y]\overset{h^*}{\longrightarrow}Im(h^*)\longrightarrow0
\]
is an exact sequence of groups and group homomorphisms.
\end{lemma}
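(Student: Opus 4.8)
The plan is to exploit the coaction map $\psi\colon C\to C\vee\Sigma A$ coming from the cofibration $A\xrightarrow{f}B\xrightarrow{g}C\xrightarrow{h}\Sigma A$. For any space $Y$, this coaction induces a map $\mu_Y\colon[C,Y]\to[C,Y]\times[\Sigma A,Y]$ whose second coordinate, after dualizing the usual argument for the cofiber of $f$, records an action of the group $[\Sigma A,Y]$ on the set $[C,Y]$. I would first set up notation: write $\theta\colon[C,Y]\times[\Sigma A,Y]\to[C,Y]$ for this action, so that $h^*\colon[\Sigma A,Y]\to[C,Y]$ is the orbit map of the basepoint, i.e. $h^*(x)=\theta(\mathrm{const},x)$. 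The standard coaction identities (which hold because $C$ is a cofiber) give that two elements of $[C,Y]$ have the same image under $g^*\colon[C,Y]\to[B,Y]$ precisely when they lie in the same orbit of $\theta$; in particular $h^*(x)=h^*(x')$ iff $x-x'\in\mathrm{Im}((\Sigma f)^*)$ in the group $[\Sigma A,Y]$, which already gives exactness of the sequence at the level of sets together with surjectivity onto $\mathrm{Im}(h^*)$.

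**Next I would establish the group structure.** The key point is that $\mathrm{Im}(h^*)$ is exactly the orbit of the basepoint, and I claim the restriction of $\theta$ to $\{\mathrm{const}\}\times[\Sigma A,Y]$ descends to a well-defined binary operation on $\mathrm{Im}(h^*)$. Concretely, define for $h^*(x),h^*(y)\in\mathrm{Im}(h^*)$ the product $h^*(x)\cdot h^*(y):=h^*(x+y)$. This is well-defined iff $h^*$ is a group homomorphism modulo its ``kernel'', i.e. iff $h^*(x)=h^*(x')$ implies $h^*(x+y)=h^*(x'+y)$; by the previous paragraph the hypothesis means $x-x'\in\mathrm{Im}((\Sigma f)^*)$, hence $(x+y)-(x'+y)=x-x'\in\mathrm{Im}((\Sigma f)^*)$ as well (using that $[\Sigma A,Y]$ is a group, even if nonabelian at this stage, the subgroup $\mathrm{Im}((\Sigma f)^*)$ is normal because $(\Sigma f)^*$ is a homomorphism and cosets behave — here I would be slightly careful and note that $\Sigma A$ being a suspension already makes $[\Sigma A,Y]$ a group, and $\mathrm{Im}((\Sigma f)^*)$ is a subgroup; the hocommutativity hypothesis will be used to upgrade to abelian and hence normality is automatic). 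With the operation well-defined, associativity, identity $h^*(0)=\mathrm{const}$, and inverses $h^*(x)^{-1}=h^*(-x)$ are inherited directly from the group structure on $[\Sigma A,Y]$, so $\mathrm{Im}(h^*)$ is a group and $h^*$ is a surjective homomorphism by construction. Commutativity of $\mathrm{Im}(h^*)$ follows once we know $[\Sigma A,Y]$ is abelian, which is where the hypothesis that $\Sigma A$ is homotopy cocommutative enters: cocommutativity of the comultiplication on $\Sigma A$ forces $[\Sigma A,Y]$ to be an abelian group for every $Y$, and the quotient-type structure on $\mathrm{Im}(h^*)$ inherits this.

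**For exactness of the three-term sequence as groups:** we already have $h^*$ surjective onto $\mathrm{Im}(h^*)$ and a homomorphism; and $(\Sigma f)^*$ is a homomorphism of groups because $\Sigma f$ is a suspension. Exactness at $[\Sigma A,Y]$, namely $\ker(h^*)=\mathrm{Im}((\Sigma f)^*)$, is exactly the set-level statement ``$h^*(x)=\mathrm{const}$ iff $x\in\mathrm{Im}((\Sigma f)^*)$'' that comes from the cofiber/coaction analysis, now reinterpreted: since $\mathrm{Im}(h^*)$ is a group with identity $\mathrm{const}=h^*(0)$, saying $h^*(x)$ equals the identity is the same as the set-theoretic equality $h^*(x)=h^*(0)$, which by the coaction argument means $x-0=x\in\mathrm{Im}((\Sigma f)^*)$. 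This closes the argument.

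**The main obstacle** I expect is the bookkeeping around well-definedness: one must verify carefully that the operation $h^*(x)\cdot h^*(y)=h^*(x+y)$ does not depend on the chosen preimages, and this rests on correctly identifying the fibers of $h^*$ with cosets of $\mathrm{Im}((\Sigma f)^*)$. Getting that identification cleanly requires the coaction $\psi$ and its compatibility with $h$ and $g$ (the ``dual James sequence'' / Puppe-sequence coaction formalism); the homotopy cocommutativity of $\Sigma A$ is what guarantees both that $[\Sigma A,Y]$ is abelian (so the coset description is unambiguous and the resulting quotient is abelian) and that the coaction behaves symmetrically enough for the product to be independent of order. Everything else — that suspensions induce homomorphisms, that identities and inverses transport — is routine once this core point is nailed down.
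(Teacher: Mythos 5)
Your argument is correct, and it shares the paper's essential engine: both proofs define the product on $Im(h^*)$ by $h^*(x)\cdot h^*(y):=h^*(x+y)$, both use the coaction $\psi\colon C\to C\vee\Sigma A$ to make this well-defined in the first variable, and both invoke cocommutativity of $\Sigma A$ at exactly the same spot (to make $[\Sigma A,Y]$ abelian and thereby handle the second variable). The difference is in how well-definedness is certified. The paper works at the homotopy level with a single identity, $\sigma\circ h\simeq(h\vee\one)\circ\psi$, and writes out the string of homotopies $h^*(\alpha+\beta)\simeq\triangledown\circ(\alpha\vee\beta)\circ(h\vee\one)\circ\psi\simeq h^*(\alpha'+\beta)$ directly. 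You instead invoke the full action of $[\Sigma A,Y]$ on $[C,Y]$ induced by $\psi$ (unitality and associativity of that action, plus exactness of the Puppe sequence at $[\Sigma A,Y]$) to identify the fibers of $h^*$ with the cosets of $\mathrm{Im}((\Sigma f)^*)$; this packages the whole lemma as the statement $Im(h^*)\cong[\Sigma A,Y]/\mathrm{Im}((\Sigma f)^*)$, from which the group structure, abelianness, surjectivity of $h^*$, and the kernel computation all drop out at once. Your version is conceptually tidier but leans on more unproved (though standard) coaction formalism -- note in particular that the "same orbit iff same image under $g^*$" fact you cite is not actually needed; only the stabilizer-of-the-basepoint computation is -- whereas the paper's version is more self-contained, needing only the one compatibility homotopy. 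Your flagging of the normality issue, and its resolution via cocommutativity, is exactly right and matches where the hypothesis is used in the paper.
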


\begin{proof}
Apply $[-,Y]$ to the cofibration to get an exact sequence of sets
\begin{equation}\label{exact seq_set exact seq}
[\Sigma B, Y]\overset{(\Sigma f)^*}{\longrightarrow}[\Sigma A, Y]\overset{h^*}{\longrightarrow}[C, Y].
\end{equation}
Note that $[\Sigma B, Y]$ and $[\Sigma A, Y]$ are groups, and $(\Sigma f)^*$ is a group homomorphism. We will replace $[C, Y]$ by $Im(h^*)$ and define a group structure on it such that $h^*:[\Sigma A, Y]\to Im(h^*)$ is a group homomorphism.

For any $\alpha$ and $\beta$ in $[\Sigma A, Y]$, we define a binary operator $\boxtimes$ on $Im(h^*)$ by
\[
h^*\alpha\boxtimes h^*\beta=h^*(\alpha+\beta).
\]
To check this is well-defined we need to show $h^*(\alpha+\beta)\simeq h^*(\alpha'+\beta)\simeq h^*(\alpha+\beta')$ for any~$\alpha,\alpha',\beta,\beta'$ satisfying $h^*\alpha\simeq h^*\alpha'$ and $h^*\beta\simeq h^*\beta'$.

First we show $h^*(\alpha+\beta)\simeq h^*(\alpha'+\beta)$. By definition, we have
\[
h^*(\alpha+\beta)=(\alpha+\beta)\circ h=\triangledown\circ(\alpha\vee\beta)\circ\sigma\circ h,
\]
where $\sigma:\Sigma A\to\Sigma A\vee\Sigma A$ is the comultiplication and $\triangledown:Y\vee Y\to Y$ is the folding map. Since $C$ is a cofiber, there is a coaction $\psi:C\to C\vee\Sigma A$ such that $\sigma\circ h\simeq(h\vee\one)\circ\psi$.
\[\xymatrix{
C\ar[r]^-{\psi}\ar[d]^-{h}	&C\vee\Sigma A\ar[d]^-{h\vee\one}\\
\Sigma A\ar[r]^-{\sigma}	&\Sigma A\vee\Sigma A
}\]
Then we obtain a string of equivalences
\begin{eqnarray*}
h^*(\alpha+\beta)
&=&\triangledown\circ(\alpha\vee\beta)\circ\sigma\circ h\\
&\simeq&\triangledown\circ(\alpha\vee\beta)\circ(h\vee\one)\circ\psi\\
&\simeq&\triangledown\circ(\alpha'\vee\beta)\circ(h\vee\one)\circ\psi\\
&\simeq&\triangledown\circ(\alpha'\vee\beta)\circ\sigma\circ h\\
&=&h^*(\alpha'+\beta)
\end{eqnarray*}
The third line is due to the assumption $h^*\alpha\simeq h^*\alpha'$. Therefore we have $h^*(\alpha+\beta)\simeq h^*(\alpha'+\beta)$. Since $\Sigma A$ is cocommutative, $[\Sigma A, Y]$ is abelian and $h^*(\alpha+\beta)\simeq h^*(\beta+\alpha)$. Then we have
\[
h^*(\alpha+\beta)\simeq h^*(\beta+\alpha)\simeq h^*(\beta'+\alpha)\simeq h^*(\alpha+\beta').
\]
This implies $\boxtimes$ is well-defined.

Due to the associativity of $+$ in $[\Sigma A, Y]$, $\boxtimes$ is associative since
\begin{eqnarray*}
(h^*\alpha\boxtimes h^*\beta)\boxtimes h^*\gamma
&=&h^*(\alpha+\beta)\boxtimes h^*\gamma\\
&=&h^*((\alpha+\beta)+\gamma)\\
&=&h^*(\alpha+(\beta+\gamma))\\
&=&h^*\alpha\boxtimes h^*(\beta+\gamma)\\
&=&h^*\alpha\boxtimes (h^*\beta\boxtimes h^*\gamma).
\end{eqnarray*}
Clearly the trivial map $\ast:C\to Y$ is the identity of $\boxtimes$ and $h^*(-\alpha)$ is the inverse of $h^*\alpha$. Therefore $\boxtimes$ is indeed a group multiplication.

By definition of $\boxtimes$, $h^*:[\Sigma A, Y]\to Im(h^*)$ is a group homomorphism, and hence an epimorphism. Since $[\Sigma A, Y]$ is abelian, so is $Im(h^*)$. We replace $[C,Y]$ by $Im(h^*)$ in~(\ref{exact seq_set exact seq}) to obtain a sequence of groups and group homomorphisms
\[
[\Sigma B, Y]\overset{(\Sigma f)^*}{\longrightarrow}[\Sigma A, Y]\overset{h^*}{\longrightarrow}Im(h^*)\longrightarrow0.
\]
The exactness of~(\ref{exact seq_set exact seq}) implies $ker(h^*)=Im(\Sigma f)^*$, so the sequence is exact.
\end{proof}

Applying Lemma~\ref{lemma_dfn Im group} to cofibration $\Sigma^3G\to\Sigma^2G\to\C\PP^2\wedge G$ and the space $Y=BG$, we obtain an exact sequence of abelian groups
\begin{equation}\label{exact seq_general q^* refined}
[\Sigma^3G, BG]\overset{(\Sigma\eta)^*}{\longrightarrow}[\Sigma^4G, BG]\overset{q^*}{\longrightarrow}Im(q^*)\longrightarrow0.
\end{equation}
In the middle square of~(\ref{digm_dfn of tilde partial}) $\partial'_k\simeq q^*\partial_k$, so $\partial'_k$ is in $Im(q^*)$. For any number $m$,~\mbox{$q^*(m\partial_k)=mq^*\partial_k$}, so the ``order'' of $\partial'_k$ defined in \cite{theriault12} coincides with the multiplicative order of $\partial'_k$ in $Im(q^*)$. The exact sequence~(\ref{exact seq_general q^* refined}) allows us to compare the orders of $\partial_1$ and $\partial'_1$.

\begin{lemma}\label{lemma_partial'}
Let $m$ be the order of $\partial_1$ and let $m'$ be the order of $\partial'_1$. Then $m$ is $m'$ or $2m'$.
\end{lemma}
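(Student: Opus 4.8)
The plan is to read off the relationship between $m$ and $m'$ from the exact sequence (\ref{exact seq_general q^* refined}) of abelian groups, viewing $\partial_1$ as an element of $[\Sigma^4 G, BG]$ and $\partial'_1 = q^*\partial_1$ as an element of $Im(q^*)$. First I would dispose of the easy divisibility: since $q^*\colon [\Sigma^4 G, BG]\to Im(q^*)$ is a group homomorphism and $m\partial_1 = 0$, we have $m\,\partial'_1 = q^*(m\partial_1) = 0$, so $m' \mid m$.

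The real content is the reverse estimate $m \mid 2m'$. By definition of $m'$ we have $m'\,\partial'_1 = q^*(m'\partial_1) = 0$, so exactness of (\ref{exact seq_general q^* refined}) puts $m'\partial_1$ in $\ker(q^*) = Im\big((\Sigma\eta)^*\big)$; write $m'\partial_1 = (\Sigma\eta)^*(\theta)$ for some $\theta\in[\Sigma^3 G, BG]$. The key step is to show that $Im\big((\Sigma\eta)^*\big)$ is annihilated by $2$. Since $(\Sigma\eta)^*$ is precomposition with the map $(\Sigma\eta)\wedge\one_G\colon \Sigma^4 G\to\Sigma^3 G$, it suffices to check that $2\cdot\big((\Sigma\eta)\wedge\one_G\big)$ is null homotopic. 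Smashing the standard pinch-fold presentation of $2\cdot(\Sigma\eta)\colon S^4\to S^3$ with $\one_G$, and using that $\Sigma^4 G$ is a suspension so that these additions are compatible, gives $2\cdot\big((\Sigma\eta)\wedge\one_G\big) = (2\Sigma\eta)\wedge\one_G$; and $2\Sigma\eta = 0$ because $\Sigma\eta$ generates $\pi_4(S^3)\cong\Z/2$. Hence $2\cdot(\Sigma\eta)^*(\theta) = \theta\circ\big(2\cdot((\Sigma\eta)\wedge\one_G)\big) = 0$, i.e.\ $(2m')\partial_1 = 0$, so $m \mid 2m'$.

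Combining $m' \mid m$ and $m \mid 2m'$ and writing $m = am'$ forces $a \mid 2$, so $m = m'$ or $m = 2m'$. The only point that needs care is the identity $2\cdot\big((\Sigma\eta)\wedge\one_G\big) = (2\Sigma\eta)\wedge\one_G$, i.e.\ the compatibility of the co-$H$ additions in $[\Sigma^4 G,\Sigma^3 G]$ with smashing elements of $\pi_4(S^3)$ by $\one_G$; this is routine given that $\Sigma^4 G$ and $\Sigma^3 G$ are iterated suspensions, and after that the only external input is the classical computation $\pi_4(S^3)\cong\Z/2$.
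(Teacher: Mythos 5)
Your proposal is correct and follows essentially the same route as the paper: use exactness of (\ref{exact seq_general q^* refined}) to write $m'\partial_1$ as $(\Sigma\eta)^*(\theta)$, then kill it after multiplying by $2$ because $\Sigma\eta$ has order $2$, and combine with $m'\mid m$. The only difference is that you spell out the compatibility $2\cdot((\Sigma\eta)\wedge\one_G)=(2\Sigma\eta)\wedge\one_G$, which the paper leaves implicit in the phrase ``since $\Sigma\eta$ has order 2.''
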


\begin{proof}
By exactness of~(\ref{exact seq_general q^* refined}), there is some $f\in[\Sigma^3G, BG]$ such that $(\Sigma\eta)^*f\simeq m'\partial_1$. Since~$\Sigma\eta$ has order 2, $2m'\partial_1$ is null homotopic. It follows that $2m'$ is a multiple of $m$. Since $m$ is greater than or equal to $m'$, $m$ is either $m'$ or $2m'$.
\end{proof}

When $G=SU(2)$, the order $m$ of $\partial_1$ is 12 and the order $m'$ of $\partial'_1$ is 6 \cite{KT96}. When~\mbox{$G=SU(3)$}, $m=24$ and $m'=12$ \cite{theriault12}. It is natural to ask whether $m=2m'$ for all~$G$. However, this is not the case. In a preprint by Theriault and the author, we showed that~$m=m'=40$ for~$G=Sp(2)$.

In the $S^4$ case, part~(1) of Theorem~\ref{thm_counting lemma S4} gives a sufficient condition for $\G_k(S^4)\simeq\G_l(S^4)$ when localized rationally or at any prime. In the $\C\PP^2$ case, Theriault showed a similar counting statement, in which the sufficient condition depends on the order of $\partial_1$ instead of~$\partial'_1$.

\begin{thm}[Theriault, \cite{theriault12}]\label{thm_stephen counting lemma}
Let $m$ be the order of $\partial_1$. If $(m,k)=(m,l)$, then $\G_k(\C\PP^2)$ is homotopy equivalent to $\G_l(\C\PP^2)$ when localized rationally or at any prime.
\end{thm}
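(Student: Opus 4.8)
The plan is to adapt the proof of part~(1) of Theorem~\ref{thm_counting lemma S4} and then transport the resulting self-equivalence through the naturality square~(\ref{digm_dfn of tilde partial}). Localizing, the statement can be checked one prime at a time together with the rationalization, and if $m$ is finite then $\partial_1$ is torsion, hence rationally null and $p$-locally null for every prime $p\nmid m$; in those cases $\partial'_1\simeq q^*\partial_1$ is null, fibration~(\ref{fib_Gk(CP2)}) splits, and all the $\G_k(\C\PP^2)$ are equivalent. So fix a prime $p\mid m$ and put $s=\nu_p(m)$. Since $\partial_k\simeq k\partial_1$ in $[G,\Omega^3_0G]$ (triple adjunction is additive and $\langle k\imath,\one\rangle=k\langle\imath,\one\rangle$), and since $(m,k)=(m,l)$ forces $\min(s,\nu_p(k))=\min(s,\nu_p(l))$, I would pick an integer $b$ coprime to $p$ with $bk\equiv l\pmod{p^{s}}$ (one may take $b=1$ when $p^{s}\mid k$). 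As $\Omega^3_0G$ is a homotopy-commutative, homotopy-associative $H$-space, the $b$-th power map $\mu_b\colon\Omega^3_0G\to\Omega^3_0G$ is a $p$-local homotopy equivalence and induces multiplication by $b$ on $[-,\Omega^3_0G]$; because $\partial_1$ has $p$-exponent $p^{s}$, this yields $\mu_b\circ\partial_k\simeq(bk)\partial_1\simeq l\partial_1=\partial_l$ $p$-locally.

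Next I would carry this down to $\C\PP^2$. Applying $\map^*(-,BG)$ to the cofibration~(\ref{cofib_CP2}), and noting that the cofibre of $q\colon\C\PP^2\to S^4$ is $\Sigma\C\PP^1\simeq S^3$, produces a fibration
\[
\Omega^2G\overset{\iota}{\longrightarrow}\Omega^3_0G\overset{q^*}{\longrightarrow}\map^*_0(\C\PP^2,BG),
\]
where $\iota=(\Sigma\eta)^*$. The key point is that $\iota$ is the loop map $\Omega(\eta^*)$ on $\eta^*\colon\Omega G\to\Omega^2G$, hence an $H$-map, so it commutes with power maps: $\mu_b\circ\iota\simeq\iota\circ\mu'_b$, with $\mu'_b$ the $b$-th power map on $\Omega^2G$, again a $p$-local equivalence. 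By the middle square of~(\ref{digm_dfn of tilde partial}), $\G_k(\C\PP^2)\simeq\mathrm{hofib}(q^*\circ\partial_k)$, and the homotopy fibre of a composite is a homotopy pullback, so $\G_k(\C\PP^2)$ is the homotopy pullback of $\partial_k\colon G\to\Omega^3_0G$ along the fibre inclusion $\iota\colon\Omega^2G\to\Omega^3_0G$, and likewise for $l$. Now replace $\partial_l$ by $\mu_b\circ\partial_k$ $p$-locally: the homotopy pullback computing $\G_l(\C\PP^2)$ becomes the pullback of $\partial_k$ against $\mu_b^{-1}\circ\iota\simeq\iota\circ(\mu'_b)^{-1}$, which is exactly the pullback of the fibration $\G_k(\C\PP^2)\to\Omega^2G$ along the $p$-local equivalence $(\mu'_b)^{-1}\colon\Omega^2G\to\Omega^2G$. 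Pulling a fibration back along a homotopy equivalence of the base gives a homotopy-equivalent total space, so $\G_l(\C\PP^2)\simeq\G_k(\C\PP^2)$ $p$-locally; running this for all $p\mid m$ and rationally completes the proof.

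The arithmetic and the power-map input in the first paragraph are essentially the $S^4$ counting argument and should be routine. The step I expect to be the main obstacle is the transfer in the second paragraph: since $\map^*_0(\C\PP^2,BG)$ is not an $H$-space, $\mu_b$ does not descend to a self-map of it, and one cannot simply insert $\mu_b$ into the naturality square~(\ref{digm_dfn of tilde partial}); the point is to work instead with the homotopy-pullback description of $\G_k(\C\PP^2)$ and to use that $\mu_b$ restricts compatibly to the fibre $\Omega^2G$ precisely because $\iota=\Omega(\eta^*)$ is a loop map. Verifying these homotopy-pullback identifications carefully — keeping track of basepoints, fibrant replacements, and the passage between $\Omega^3G$ and its basepoint component $\Omega^3_0G$ — is where the real work lies.
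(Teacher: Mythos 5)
The paper does not actually prove this statement: it is quoted from \cite{theriault12} and used as a black box, so there is no in-paper argument to compare yours against. Judged on its own, your proof is correct, and it reconstructs the result in the spirit of the published argument. The two genuinely load-bearing points are exactly the ones you flag: (i) the identification of the homotopy fibre of $q^*\colon\Omega^3_0G\to\map^*_0(\C\PP^2,BG)$ with $\Omega^2G$ via $(\Sigma\eta)^*$, together with the observation that this fibre inclusion is $\Omega(\eta^*)$ and hence an H-map commuting with $b$-th power maps; and (ii) the rewriting of $\G_k(\C\PP^2)\simeq\mathrm{hofib}(q^*\circ\partial_k)$ as the homotopy pullback of $\partial_k$ along that fibre inclusion, which is what lets you move the $p$-local unit power map $\mu_b$ off the non-H-space $\map^*_0(\C\PP^2,BG)$ and onto $\Omega^2G$, where pulling back along the equivalence $(\mu'_b)^{-1}$ does no harm. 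The arithmetic (existence of $b$ coprime to $p$ with $bk\equiv l\pmod{p^s}$, using $\min(s,\nu_p(k))=\min(s,\nu_p(l))$) and the degenerate cases ($p\nmid m$, the rationalization, and $p^s\mid k$, where both boundary maps are null and both fibrations split) all check out. The only standing hypotheses you should make explicit are that $\partial_k\simeq k\partial_1$ in $[G,\Omega^3_0G]$ (Lang's linearity of Samelson products, which the paper also invokes) and that $G$, $\Omega^2G$, $\Omega^3_0G$ and $\map^*_0(\C\PP^2,BG)$ are nilpotent, so that $p$-localization is defined and preserves the homotopy pullback squares and the $p$-local order $p^{s}$ of $\partial_1$.
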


Lemma~\ref{lemma_partial'} can be used to improve the sufficient condition of Theorem~\ref{thm_stephen counting lemma}.

\begin{thm}
Let $m'$ be the order of $\partial'_1$. If $(m',k)=(m',l)$, then $\G_k(\C\PP^2)$ is homotopy equivalent to $\G_l(\C\PP^2)$ when localized rationally or at any prime.
\end{thm}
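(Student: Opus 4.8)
The plan is to establish the equivalence after localization at each prime $p$, and rationally. The rational case is immediate: $\partial'_1$ has finite order $m'$, hence is rationally null, so $\partial'_k=k\,\partial'_1$ is rationally null for every $k$ and each $\G_k(\C\PP^2)$ is rationally equivalent to $G\times\Omega\map^*_0(\C\PP^2,BG)$. So fix a prime $p$. The structural input from Section~2 is that $\partial'_k=q^*\partial_k=k\,\partial'_1$ inside the abelian group $Im(q^*)$ of Lemma~\ref{lemma_dfn Im group}, in which $\partial'_1$ has order exactly $m'$; thus $\partial'_k$ and $\partial'_l$ have order $m'/(m',k)=m'/(m',l)$ and generate the same finite cyclic subgroup of $Im(q^*)$, whose $p$-localization depends only on $\nu_p((m',k))$. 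Since $\G_k(\C\PP^2)$ is the homotopy fibre of $\partial'_k$ in~(\ref{fib_Gk(CP2)}), the aim is to show that its $p$-local homotopy type is governed by this subgroup.

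I would then split into cases using Lemma~\ref{lemma_partial'}, which says $m$ is $m'$ or $2m'$. If $p$ is odd, or $p=2$ and $m=m'$, then $\nu_p(m)=\nu_p(m')$, so $\nu_p((m,k))=\nu_p((m',k))=\nu_p((m',l))=\nu_p((m,l))$; hence $(m,k)$ and $(m,l)$ have the same $p$-primary component, and the prime-$p$ part of the argument proving Theorem~\ref{thm_stephen counting lemma} (which runs one prime at a time) gives $\G_k(\C\PP^2)_{(p)}\simeq\G_l(\C\PP^2)_{(p)}$. The remaining case is $p=2$ and $m=2m'$; put $s=\nu_2(m')$, so $\nu_2(m)=s+1$. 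If $\nu_2(k)<s$, then $(m',k)=(m',l)$ forces $\nu_2(l)=\nu_2(k)$, so $\nu_2((m,k))=\nu_2((m,l))$ and we conclude as above. If $\nu_2(k)\ge s$, then $(m',k)=(m',l)$ forces $\nu_2(l)\ge s$ as well, so $2^s$ divides both $k$ and $l$ in $\Z_{(2)}$; since $\partial'_1$ has $2$-local order $2^s$, both $\partial'_k=k\partial'_1$ and $\partial'_l=l\partial'_1$ vanish after $2$-localization, whence $\G_k(\C\PP^2)_{(2)}$ and $\G_l(\C\PP^2)_{(2)}$ are each the homotopy fibre of a null map, hence both equivalent to $G_{(2)}\times\Omega\map^*_0(\C\PP^2,BG)_{(2)}$.

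The main obstacle is twofold. First, one must avoid simply quoting Theorem~\ref{thm_stephen counting lemma}, since $(m',k)=(m',l)$ does \emph{not} imply $(m,k)=(m,l)$; the dichotomy above is precisely what confines the gap between $m$ and $m'$ to the prime $2$ and, there, substitutes $\partial'_k$ (whose order is governed by $m'$) for $\partial_k$ (whose order is governed by $m$) as the relevant boundary map — this is where Lemma~\ref{lemma_dfn Im group} and Lemma~\ref{lemma_partial'} are used, and why the weaker hypothesis suffices. Second, the appeal to the prime-$p$ case of Theorem~\ref{thm_stephen counting lemma} rests on Theriault's argument that when $\partial_k$ and $\partial_l$ differ by a $p$-local unit the homotopy fibres of $q^*\partial_k$ and $q^*\partial_l$ agree $p$-locally; because $\map^*_0(\C\PP^2,BG)$ is not an $H$-space this cannot be seen by multiplying on the target, and one instead realises the unit by a power map on $\Omega^3_0G\simeq\map^*(S^4,BG)$ (a $p$-local self-equivalence) and transports it through $q^*$ via the coaction $\psi\colon\C\PP^2\to\C\PP^2\vee S^4$, keeping track of path components. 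This second ingredient is taken directly from \cite{theriault12}; the new content is the bookkeeping that lets $m'$ play the role of $m$.
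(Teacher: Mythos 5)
Your proposal is correct and follows essentially the same route as the paper: reduce to Theorem~\ref{thm_stephen counting lemma} via Lemma~\ref{lemma_partial'} whenever the $p$-parts of $(m,k)$ and $(m,l)$ agree (which covers the rational, odd-prime, and $m=m'$ cases, as well as the $2$-local case with $\nu_2(k)<\nu_2(m')$), and in the remaining $2$-local case with $m=2m'$ and $\nu_2(k)\geq\nu_2(m')$ observe that $\partial'_k=k\partial'_1$ and $\partial'_l=l\partial'_1$ are both null because $\partial'_1$ has $2$-local order $2^{\nu_2(m')}$, so both gauge groups split as $G\times\map^*_0(\C\PP^2,G)$. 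The only cosmetic difference is that you organise the case analysis by $\nu_2(k)$ while the paper phrases it via $(2^{n-1},k)=2^i$; the substance is identical.
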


\begin{proof}
By Lemma~\ref{lemma_partial'}, $m$ is either $m'$ or $2m'$. If $m=m'$, then the statement is same as Theorem~\ref{thm_stephen counting lemma}. If we localize rationally or at any odd prime, then $(m,k)=(m',k)$ for any~$k$, so a homotopy equivalence $\G_k(\C\PP^2)\simeq\G_l(\C\PP^2)$ follows by Theorem~\ref{thm_stephen counting lemma}. It remains to consider the case where $m=2m'$ when localized at 2.

Assume $m=2^n$ and $m'=2^{n-1}$. For any $k$, $(2^{n-1},k)=2^i$ where $i$ an integer such that~$0\leq i\leq n-1$. If $i\leq n-2$, then $k=2^it$ for some odd number $t$ and $(2^{n-1},k)=2^i$. The sufficient condition $(2^{n-1},k)=(2^{n-1},l)$ is equivalent to $(2^n,k)=(2^n,l)$. Again the homotopy equivalence $\G_k(\C\PP^2)\simeq\G_l(\C\PP^2)$ follows by Theorem~\ref{thm_stephen counting lemma}. If $i=n-1$, then $(2^n,k)$ is either~$2^n$ or $2^{n-1}$. We claim that $\G_k(\C\PP^2)$ has the same homotopy type for both $(2^n,k)=2^n$ or $(2^n,k)=2^{n-1}$.

Consider fibration~(\ref{fib_Gk(CP2)})
\[
\map^*_0(\C\PP^2, G)\longrightarrow\G_k(\C\PP^2)\longrightarrow G\overset{\partial'_k}{\longrightarrow}\map^*_0(\C\PP^2,BG).
\]
If $(2^n,k)=2^n$, then $k=2^nt$ for some number $t$. By linearity of Samelson products, $\partial_k\simeq k\partial_1$. Since $\partial'_k\simeq q^*k\partial_1\simeq q^*2^nt\partial_1$ and $\partial_1$ has order $2^n$, $\partial'_k$ is null homotopic and we have
\[
\G_k(\C\PP^2)\simeq G\times\map^*_0(\C\PP^2,G).
\]
If $(2^n,k)=2^{n-1}$, then $k=2^{n-1}t$ for some odd number $t$. Writing $t=2s+1$ gives~\mbox{$k=2^ns+2^{n-1}$}. Since $\partial'_k\simeq q^*k\partial_1\simeq q^*(2^ns+2^{n-1})\partial_1\simeq q^*2^{n-1}\partial_1$ and $\partial'_1$ has order $2^{n-1}$, $\partial'_k$ is null homotopic and we have
\[
\G_k(\C\PP^2)\simeq G\times\map^*_0(\C\PP^2,G).
\]
The same is true for $\G_l(\C\PP^2)$ and hence $\G_k(\C\PP^2)\simeq\G_l(\C\PP^2)$.
\end{proof}

\section{Plan for the proofs of Theorems~\ref{thm_main thm} and~\ref{thm_necessary condition}}

From this section onward, we will focus on $SU(n)$-gauge groups over $\C\PP^2$. There is a fibration
\begin{equation}\label{exact seq_U fibration}
SU(n)\longrightarrow SU(\infty)\overset{p}{\longrightarrow}W_n,
\end{equation}
where $p:SU(\infty)\to W_n$ is the projection and $W_n$ is the symmetric space $SU(\infty)/SU(n)$. Then we have
\begin{eqnarray*}
\HH{*}(SU(\infty))&=&\Lambda(x_3,\cdots,x_{2n-1},\cdots),\\
\HH{*}(SU(n))&=&\Lambda(x_3,\cdots,x_{2n-1}),\\
\HH{*}(BSU(n))&=&\Z[c_2,\cdots,c_n],\\
\HH{*}(W_n)&=&\Lambda(\bar{x}_{2n+1},\bar{x}_{2n+3},\cdots),
\end{eqnarray*}
where $x_{2n+1}$ has degree $2n+1$, $c_i$ is the $i^{\text{th}}$ universal Chern class and $x_{2i+1}=\sigma(c_{i+1})$ is the image of $c_{i+1}$ under the cohomology suspension $\sigma$, and $p^*(\bar{x}_{2i+1})=x_{2i+1}$. Furthermore,~$H^{2n}(\Omega W_n)\cong\Z$ and $H^{2n+2}(\Omega W_n)\cong\Z$ are generated by $a_{2n}$ and $a_{2n+2}$, where $a_{2i}$ is the transgression of $x_{2i+1}$.

The $(2n+4)$-skeleton of $W_n$ is $\Sigma^{2n-1}\C\PP^2$ for $n$ odd, and is $S^{2n+3}\vee S^{2n+1}$ for $n$ even, so its homotopy groups are as follows:
\begin{equation}\label{table_pi Wn}
\begin{array}{c|c c c c}
				&\multicolumn{4}{c}{\pi_i(W_n)}\\	\hline
i				&\leq 2n	&2n+1	&2n+2	&2n+3\\	\hline
n\text{ odd}	&0			&\Z		&0		&\Z\\
n\text{ even}	&0			&\Z		&\Z/2\Z	&\Z\oplus\Z/2\Z\\	
\end{array}
\end{equation}

The canonical map $\epsilon:\Sigma\C\PP^{n-1}\to SU(n)$ induces the inclusion~\mbox{$\epsilon_*:H_*(\Sigma\C\PP^{n-1})\to H_*(SU(n))$} of the generating set. Let $C$ be the quotient $\C\PP^{n-1}/\C\PP^{n-3}$ and let $\bar{q}:\Sigma\C\PP^{n-1}\to\Sigma C$ be the quotient map. Then there is a diagram
\[
\xymatrix{
[\Sigma C, SU(n)]\ar[r]^-{(\partial'_k)_*}\ar[d]^-{\bar{q}^*}	&[\Sigma C, \map^*(\C\PP^2,BSU(n))]\ar[r]\ar[d]^-{\bar{q}^*}	&[\Sigma C, B\G_k(\C\PP^2)]\ar[d]^-{\bar{q}^*}\\
[\Sigma\C\PP^{n-1}, SU(n)]\ar[r]^-{(\partial'_k)_*}				&[\Sigma\C\PP^{n-1}, \map^*(\C\PP^2,BSU(n))]\ar[r]				&[\Sigma\C\PP^{n-1}, B\G_k(\C\PP^2)],
}\]
where $(\partial'_k)_*$ sends $f$ to $\partial'_k\circ f$ and the rows are induced by fibration~(\ref{fib_Gk(CP2)}). In particular, in the second row the map $\epsilon:\Sigma\C\PP^{n-1}\to SU(n)$ is sent to $(\partial'_k)_*(\epsilon)=\partial'_k\circ\epsilon$. In Section 4, we use unstable $K$-theory to calculate the order of $\partial'_1\circ\epsilon$, giving a lower bound on the order of $\partial'_1$. Furthermore, in \cite{HK06} Hamanaka and Kono considered an exact sequence similar to the first row to give a necessary condition for $\G_k(S^4)\simeq\G_l(S^4)$. In Section 5 we follow the same approach and use the first row to give a necessary condition for $\G_k(\C\PP^2)\simeq\G_l(\C\PP^2)$.

We remark that it is difficult to use only one of the two rows to prove both Theorems~\ref{thm_main thm} and~\ref{thm_necessary condition}. On the one hand, $\partial'_1\circ\epsilon$ factors through a map $\bar{\partial}:\Sigma C\to\map^*(\C\PP^2, BSU(n))$. There is no obvious method to show that $\bar{\partial}$ and $\partial'_1\circ\epsilon$ have the same orders except direct calculation. Therefore we cannot compare the orders of $\bar{\partial}$ and $\partial'_1$ to prove Theorem~\ref{thm_main thm} without calculating the order of $\partial'_1\circ\epsilon$. On the other hand, applying the method used in Section 5 to the second row gives a much weaker conclusion than Theorem~\ref{thm_necessary condition}. This is because $[\Sigma C,B\G_k(\C\PP^2)]$ is a much smaller group than $[\Sigma\C\PP^{n-1},B\G_k(\C\PP^2)]$ and much information is lost by the map $\bar{q}^*$.

\section{A lower bound on the order of $\partial'_1$}

The restriction of $\partial_1$ to $\Sigma\C\PP^{n-1}$ is $\partial_1\circ\epsilon$, which is the triple adjoint of the composition
\[
\sm{\imath,\epsilon}:S^3\wedge\Sigma\C\PP^{n-1}\overset{\imath\wedge\epsilon}{\longrightarrow}SU(n)\wedge SU(n)\overset{\sm{\one,\one}}{\longrightarrow}SU(n).
\]
Since $SU(n)\simeq\Omega BSU(n)$, we can further take its adjoint and get
\[
\rho:\Sigma S^3\wedge\Sigma\C\PP^{n-1}\overset{\Sigma\imath\wedge\epsilon}{\longrightarrow}\Sigma SU(n)\wedge SU(n)\overset{[ev,ev]}{\longrightarrow}BSU(n),
\]
where $[ev,ev]$ is the Whitehead product of the evaluation map
\[
ev:\Sigma SU(n)\simeq\Sigma\Omega BSU(n)\to BSU(n)
\]
with itself. Similarly, the restriction $\partial'_1\circ\epsilon$ is adjoint to the composition
\[
\rho':\C\PP^2\wedge\Sigma\C\PP^{n-1}\overset{q\wedge\one}{\longrightarrow}S^4\wedge\Sigma\C\PP^{n-1}\overset{\Sigma\imath\wedge\epsilon}{\longrightarrow}\Sigma SU(n)\wedge SU(n)\overset{[ev,ev]}{\longrightarrow}BSU(n).
\]
Since we will frequently refer to the facts established in~\cite{HK03,HK06}, it is easier to follow their setting and consider its adjoint
\[
\gamma=\tau(\rho'\circ T):\C\PP^2\wedge\C\PP^{n-1}\to SU(n),
\]
where $T:\Sigma\C\PP^2\wedge\C\PP^{n-1}\to\C\PP^2\wedge\Sigma\C\PP^{n-1}$ is the swapping map and $\tau:[\Sigma\C\PP^2\wedge\C\PP^{n-1},BSU(n)]\to[\C\PP^2\wedge\C\PP^{n-1},SU(n)]$ is the adjunction. By adjunction, the orders of $\partial'_1\circ\epsilon,\rho'$ and $\gamma$ are the same. We will calculate the order of $\gamma$ using unstable $K$-theory to prove Theorem~\ref{thm_main thm}.

Apply $[\C\PP^2\wedge\C\PP^{n-1},-]$ to fibration~(\ref{exact seq_U fibration}) to obtain the exact sequence
\[
\KK^0(\C\PP^2\wedge\C\PP^{n-1})\overset{p_*}{\longrightarrow}[\C\PP^2\wedge\C\PP^{n-1},\Omega W_n]\longrightarrow[\C\PP^2\wedge\C\PP^{n-1},SU(n)]\longrightarrow\KK^1(\C\PP^2\wedge\C\PP^{n-1}).
\]
Since $\C\PP^2\wedge\C\PP^{n-1}$ is a CW-complex with even dimensional cells, $\KK^1(\C\PP^2\wedge\C\PP^{n-1})$ is zero. First we identify the term $[\C\PP^2\wedge\C\PP^{n-1},\Omega W_n]$.

\begin{lemma}\label{lemma_[Sigma CP^n-1,Omega W_n]}
We have the following:
\begin{itemize}
\item	$[\Sigma^{2n-4}\C\PP^2,\Omega W_n]\cong\Z$;
\item	$[\Sigma^{2n-3}\C\PP^2,\Omega W_n]=0$ for $n$ odd;
\item	$[\Sigma^{2n-2}\C\PP^2,\Omega W_n]\cong\Z\oplus\Z$.
\end{itemize}
\end{lemma}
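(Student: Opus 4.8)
The plan is to read off each group from the cofibre sequence that defines the relevant suspension of $\C\PP^2$. Suspending~(\ref{cofib_CP2}) $j$ times gives a cofibre sequence $S^{j+3}\to S^{j+2}\to\Sigma^j\C\PP^2\to S^{j+4}\to S^{j+3}$ whose first and third maps are suspensions of $\eta$. Since $\Omega W_n$ is a loop space, applying $[-,\Omega W_n]$ yields an exact sequence
\[
\pi_{j+3}(\Omega W_n)\overset{\circ\eta}{\longrightarrow}\pi_{j+4}(\Omega W_n)\longrightarrow[\Sigma^j\C\PP^2,\Omega W_n]\longrightarrow\pi_{j+2}(\Omega W_n)\overset{\circ\eta}{\longrightarrow}\pi_{j+3}(\Omega W_n),
\]
whose outer maps are composition with a suspension of $\eta$. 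Using $\pi_i(\Omega W_n)\cong\pi_{i+1}(W_n)$ and table~(\ref{table_pi Wn}), I would substitute $j=2n-4,\,2n-3,\,2n-2$ in turn.

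For $j=2n-4$ the flanking terms $\pi_{2n-2}(\Omega W_n)\cong\pi_{2n-1}(W_n)$ and $\pi_{2n-1}(\Omega W_n)\cong\pi_{2n}(W_n)$ both vanish, so $[\Sigma^{2n-4}\C\PP^2,\Omega W_n]\cong\pi_{2n}(\Omega W_n)\cong\pi_{2n+1}(W_n)\cong\Z$. For $j=2n-3$ with $n$ odd, $\pi_{2n+1}(\Omega W_n)\cong\pi_{2n+2}(W_n)=0$ and $\pi_{2n-1}(\Omega W_n)\cong\pi_{2n}(W_n)=0$, so that group is $0$. For $j=2n-2$ the sequence is
\[
\pi_{2n+1}(\Omega W_n)\overset{\circ\eta}{\longrightarrow}\pi_{2n+2}(\Omega W_n)\longrightarrow[\Sigma^{2n-2}\C\PP^2,\Omega W_n]\longrightarrow\pi_{2n}(\Omega W_n)\overset{\circ\eta}{\longrightarrow}\pi_{2n+1}(\Omega W_n).
\]
When $n$ is odd this reads $0\to\Z\to[\Sigma^{2n-2}\C\PP^2,\Omega W_n]\to\Z\to0$, which splits since $\Z$ is free, giving $\Z\oplus\Z$. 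When $n$ is even it reads $\Z/2\overset{\circ\eta}{\to}\Z\oplus\Z/2\to[\Sigma^{2n-2}\C\PP^2,\Omega W_n]\to\Z\overset{\circ\eta}{\to}\Z/2$; the kernel of the right-hand map is isomorphic to $\Z$ whatever that map is, so I again get a short exact sequence $0\to\operatorname{coker}\big(\Z/2\overset{\circ\eta}{\to}\Z\oplus\Z/2\big)\to[\Sigma^{2n-2}\C\PP^2,\Omega W_n]\to\Z\to0$, and it remains to show that this cokernel is $\Z$, equivalently that $\circ\eta\colon\Z/2\to\Z\oplus\Z/2$ is injective.

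The main obstacle is precisely this injectivity, which needs the cell structure of $W_n$ rather than the formal bookkeeping above. By the loop-suspension adjunction the relevant map is $\circ\eta\colon\pi_{2n+2}(W_n)\to\pi_{2n+3}(W_n)$. Since the $(2n+4)$-skeleton of $W_n$ is $S^{2n+3}\vee S^{2n+1}$ for $n$ even, the bottom-cell inclusion $j\colon S^{2n+1}\hookrightarrow W_n$ induces split monomorphisms on $\pi_i$ for $i\le 2n+3$, and in degrees $\le 2n+2$ neither the wedge summand $S^{2n+3}$ nor any Whitehead product of the wedge (which lives in dimension $\ge 4n+3$) contributes, so $\pi_{2n+1}(W_n)=\Z\langle j\rangle$ and $\pi_{2n+2}(W_n)=\Z/2\langle j\circ\eta\rangle$. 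Hence $\circ\eta$ sends the generator $j\circ\eta$ of $\pi_{2n+2}(W_n)$ to $j\circ\eta^2=j_*(\eta^2)$, which is nonzero because $\eta^2$ generates $\pi_{2n+3}(S^{2n+1})\cong\Z/2$ (we are in the stable range, as $2n+1\ge5$) and $j_*$ is injective on $\pi_{2n+3}$. Therefore the cokernel is $\Z$ and $[\Sigma^{2n-2}\C\PP^2,\Omega W_n]\cong\Z\oplus\Z$, settling all three cases. The only step that demands genuine care is this check that $\eta$, and then $\eta^2$, stay nonzero on the bottom cell of $W_n$ when $n$ is even; everything else is a formal consequence of the exact sequence together with the homotopy groups tabulated in~(\ref{table_pi Wn}).
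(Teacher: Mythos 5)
Your proposal is correct and follows essentially the same route as the paper: apply $[-,\Omega W_n]$ to the suspended cofibration $S^3\overset{\eta}{\to}S^2\to\C\PP^2$, read off the outer terms from table~(\ref{table_pi Wn}), and in the $n$ even case use the skeleton $S^{2n+1}\vee S^{2n+3}$ to see that composition with $\eta$ carries the generator $i_1\circ\eta$ of $\pi_{2n+2}(W_n)$ to the nonzero class $i_1\circ\eta^2$, so the cokernel is $\Z$. The paper's proof of that last step is phrased via the two generators $j_1,j_2$ rather than via stability of $\eta^2$, but the content is identical.
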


\begin{proof}
First, apply $[\Sigma^{2n-4}-, \Omega W_n]$ to cofibration~(\ref{cofib_CP2}) to obtain the exact sequence
\[
\pi_{2n}(W_n)\longrightarrow\pi_{2n+1}(W_n)\longrightarrow[\Sigma^{2n-4}\C\PP^2,\Omega W_n]\longrightarrow\pi_{2n-1}(W_n).
\]
We refer to Table~(\ref{table_pi Wn}) freely for the homotopy groups of $W_n$. Since $\pi_{2n-1}(W_n)$ and $\pi_{2n}(W_n)$ are zero, $[\Sigma^{2n-4}\C\PP^{n-1},\Omega W_n]$ is isomorphic to $\pi_{2n+1}(W_n)\cong\Z$.

Second, apply $[\Sigma^{2n-3}-, \Omega W_n]$ to~(\ref{cofib_CP2}) to obtain
\[
\pi_{2n+2}(W_n)\longrightarrow[\Sigma^{2n-3}\C\PP^2,\Omega W_n]\longrightarrow\pi_{2n}(W_n).
\]
Since $\pi_{2n}(W_n)$ and $\pi_{2n+2}(W_n)$ are zero for $n$ odd, so is $[\Sigma^{2n-3}\C\PP^2,\Omega W_n]$.

Third, apply $[\Sigma^{2n-2}-, \Omega W_n]$ to~(\ref{cofib_CP2}) to obtain
\[
\pi_{2n+2}(W_n)\overset{\eta_1}{\longrightarrow}\pi_{2n+3}(W_n)\longrightarrow[\Sigma^{2n-2}\C\PP^2,\Omega W_n]\overset{j}{\longrightarrow}\pi_{2n+1}(W_n)\overset{\eta_2}{\longrightarrow}\pi_{2n+2}(W_n),
\]
where $\eta_1$ and $\eta_2$ are induced by Hopf maps $\Sigma^{2n}\eta:S^{2n+3}\to S^{2n+2}$ and \mbox{$\Sigma^{2n-1}\eta:S^{2n+2}\to S^{2n+1}$}, and $j$ is induced by the inclusion $S^{2n+1}\hookrightarrow\Sigma^{2n-2}\C\PP^2$ of the bottom cell. When $n$ is odd,~$\pi_{2n+2}(W_n)$ is zero and $\pi_{2n+1}(W_n)$ and $\pi_{2n+3}(W_n)$ are $\Z$, so $[\Sigma^{2n-2}\C\PP^{n-1},\Omega W_n]$ is $\Z\oplus\Z$. When $n$ is even, the $(2n+4)$-skeleton of $W_n$ is $S^{2n+1}\vee S^{2n+3}$. The inclusions
\[
\begin{array}{c c c}
i_1:S^{2n+1}\to S^{2n+1}\vee S^{2n+3}
&\text{and}
&i_2:S^{2n+3}\to S^{2n+1}\vee S^{2n+3}
\end{array}
\]
generate $\pi_{2n+1}(W_n)$ and the $\Z$-summand of $\pi_{2n+3}(W_n)$, and the compositions
\[
\begin{array}{c c c}
j_1:S^{2n+2}\overset{\Sigma^{2n-1}\eta}{\longrightarrow}S^{2n+1}\overset{i_1}{\longrightarrow}W_n
&\text{and}
&j_2:S^{2n+3}\overset{\Sigma^{2n}\eta}{\longrightarrow}S^{2n+2}\overset{\Sigma^{2n-1}\eta}{\longrightarrow}S^{2n+1}\overset{i_1}{\longrightarrow}W_n
\end{array}
\]
generate $\pi_{2n+2}(W_n)$ and the $\Z/2\Z$-summand of $\pi_{2n+3}(W_n)$ respectively. Since $\eta_1$ sends $j_1$ to~$j_2$, the cokernel of $\eta_1$ is $\Z$. Similarly, $\eta_2$ sends $i_1$ to $j_1$, so $\eta_2:\Z\to\Z/2\Z$ is surjective. This implies the preimage of $j$ is a $\Z$-summand. Therefore $[\Sigma^{2n-2}\C\PP^2,\Omega W_n]\cong\Z\oplus\Z$.
\end{proof}

Let $C$ be the quotient $\C\PP^{n-1}/\C\PP^{n-3}$. Since $\Omega W_n$ is $(2n-1)$-connected, $[\C\PP^2\wedge\C\PP^{n-1},\Omega W_n]$ is isomorphic to $[\C\PP^2\wedge C,\Omega W_n]$ which is easier to determine.

\begin{lemma}\label{lemma_[CP CP] is free}
The group $[\C\PP^2\wedge\C\PP^{n-1},\Omega W_n]\cong[\C\PP^2\wedge C,\Omega W_n]$ is isomorphic to $\Z^{\oplus3}$.
\end{lemma}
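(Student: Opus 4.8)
The plan is to compute $[\C\PP^2\wedge C,\Omega W_n]$ directly from the two--cell structure of the stunted projective space $C=\C\PP^{n-1}/\C\PP^{n-3}$, splitting into the cases $n$ even and $n$ odd. The identification with $[\C\PP^2\wedge\C\PP^{n-1},\Omega W_n]$ is the connectivity remark already made above: smashing the cofibration $\C\PP^{n-3}\to\C\PP^{n-1}\to C$ with $\C\PP^2$ and applying $[-,\Omega W_n]$ gives an exact sequence whose outer terms are $[\C\PP^2\wedge\C\PP^{n-3},\Omega W_n]$ and $[\Sigma(\C\PP^2\wedge\C\PP^{n-3}),\Omega W_n]$, and both vanish since $\C\PP^2\wedge\C\PP^{n-3}$ has dimension $2n-2$ while $\Omega W_n$ is $(2n-1)$-connected. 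Here I assume $n\ge 3$, so that $C$ has exactly two cells, in dimensions $2n-4$ and $2n-2$.

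First I would pin down the homotopy type of $C$. The attaching map of its top cell is detected by $\mathrm{Sq}^2:H^{2n-4}(C;\Z/2)\to H^{2n-2}(C;\Z/2)$, which is multiplication by $n-2$, so it vanishes exactly when $n$ is even. Thus for $n$ even $C\simeq S^{2n-4}\vee S^{2n-2}$, hence $\C\PP^2\wedge C\simeq\Sigma^{2n-4}\C\PP^2\vee\Sigma^{2n-2}\C\PP^2$ and, by Lemma~\ref{lemma_[Sigma CP^n-1,Omega W_n]},
\[
[\C\PP^2\wedge C,\Omega W_n]\cong[\Sigma^{2n-4}\C\PP^2,\Omega W_n]\oplus[\Sigma^{2n-2}\C\PP^2,\Omega W_n]\cong\Z\oplus(\Z\oplus\Z).
\]
For $n$ odd the attaching map is (a suspension of) $\eta$, and smashing with $\C\PP^2$ exhibits $\C\PP^2\wedge C$ as the mapping cone of a map $\psi:\Sigma^{2n-3}\C\PP^2\to\Sigma^{2n-4}\C\PP^2$. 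Applying $[-,\Omega W_n]$ to the Puppe sequence
\[
\Sigma^{2n-3}\C\PP^2\overset{\psi}{\longrightarrow}\Sigma^{2n-4}\C\PP^2\longrightarrow\C\PP^2\wedge C\longrightarrow\Sigma^{2n-2}\C\PP^2\overset{\Sigma\psi}{\longrightarrow}\Sigma^{2n-3}\C\PP^2\longrightarrow\cdots
\]
and inserting the values $[\Sigma^{2n-3}\C\PP^2,\Omega W_n]=0$, $[\Sigma^{2n-4}\C\PP^2,\Omega W_n]\cong\Z$ and $[\Sigma^{2n-2}\C\PP^2,\Omega W_n]\cong\Z\oplus\Z$ from Lemma~\ref{lemma_[Sigma CP^n-1,Omega W_n]}, the vanishing of $[\Sigma^{2n-3}\C\PP^2,\Omega W_n]$ kills $\psi^*$ and $(\Sigma\psi)^*$, so the long exact sequence collapses to the short exact sequence
\[
0\longrightarrow\Z\oplus\Z\longrightarrow[\C\PP^2\wedge C,\Omega W_n]\longrightarrow\Z\longrightarrow0,
\]
which splits since $\Z$ is free. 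Hence $[\C\PP^2\wedge C,\Omega W_n]\cong\Z^{\oplus3}$ in both cases.

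I expect the one delicate point to be the case $n$ even, where the homotopy groups of $W_n$ contain $2$-torsion (see Table~(\ref{table_pi Wn})) that could in principle survive into $[\C\PP^2\wedge C,\Omega W_n]$. The plan disposes of this by splitting $C$ as a wedge of spheres for $n$ even, which reduces everything to the two torsion--free groups already computed in Lemma~\ref{lemma_[Sigma CP^n-1,Omega W_n]}. (If one instead wanted to run the cofibre--sequence argument uniformly in $n$, one would also need $[\Sigma^{2n-3}\C\PP^2,\Omega W_n]=0$ for $n$ even; this holds because that group is the cokernel of the $\eta$-multiplication $\eta_2:\pi_{2n+1}(W_n)\to\pi_{2n+2}(W_n)$, which is shown to be surjective in the proof of Lemma~\ref{lemma_[Sigma CP^n-1,Omega W_n]}.)
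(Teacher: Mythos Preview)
Your proof is correct and follows essentially the same approach as the paper. Both split into the $n$ even and $n$ odd cases, use the wedge splitting of $C$ for $n$ even, and for $n$ odd run the same Puppe sequence (the paper phrases it as applying $[\Sigma^{2n-6}\C\PP^2\wedge-,\Omega W_n]$ to cofibration~(\ref{cofib_CP2}), while you smash the attaching--map cofibration of $C$ with $\C\PP^2$; since $C=\Sigma^{2n-6}\C\PP^2$ for $n$ odd these are the same sequence). Your added justifications for the isomorphism $[\C\PP^2\wedge\C\PP^{n-1},\Omega W_n]\cong[\C\PP^2\wedge C,\Omega W_n]$ and for the homotopy type of $C$ are welcome extra detail that the paper leaves implicit.
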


\begin{proof}
When $n$ is even, $C$ is $S^{2n-2}\vee S^{2n-4}$. By Lemma~\ref{lemma_[Sigma CP^n-1,Omega W_n]}, $[\C\PP^2\wedge C, \Omega W_n]$ is $[\Sigma^{2n-2}\C\PP^2, \Omega W_n]\oplus[\Sigma^{2n-4}\C\PP^2, \Omega W_n]\cong\Z^{\oplus3}$.

When $n$ is odd, $C$ is $\Sigma^{2n-6}\C\PP^2$. Apply $[\Sigma^{2n-6}\C\PP^2\wedge-, \Omega W_n]$ to cofibration~(\ref{cofib_CP2}) to obtain the exact sequence
\[
[\Sigma^{2n-3}\C\PP^2,\Omega W_n]\longrightarrow[\Sigma^{2n-2}\C\PP^2,\Omega W_n]\longrightarrow[\Sigma^{2n-6}\C\PP^2\wedge\C\PP^2,\Omega W_n]\longrightarrow
\]
\[
\longrightarrow[\Sigma^{2n-4}\C\PP^2,\Omega W_n]\longrightarrow[\Sigma^{2n-3}\C\PP^2,\Omega W_n]
\]
By Lemma~\ref{lemma_[Sigma CP^n-1,Omega W_n]}, the first and the last terms $[\Sigma^{2n-3}\C\PP^2, \Omega W_n]$ are zero, while the second term~$[\Sigma^{2n-2}\C\PP^2, \Omega W_n]$ is $\Z\oplus\Z$ and the fourth $[\Sigma^{2n-4}\C\PP^2, \Omega W_n]$ is $\Z$. Therefore~\mbox{$[\C\PP^2\wedge C,\Omega W_n]$} is $\Z^{\oplus3}$.
\end{proof}

Define $a:[\C\PP^2\wedge\C\PP^{n-1}, \Omega W_n]\to H^{2n}(\C\PP^2\wedge\C\PP^{n-1})\oplus H^{2n+2}(\C\PP^2\wedge\C\PP^{n-1})$ to be a map sending $f\in[\C\PP^2\wedge\C\PP^{n-1}, \Omega W_n]$ to $a(f)=f^*(a_{2n})\oplus f^*(a_{2n+2})$. The cohomology class $\bar{x}_{2n+1}$ represents a map $\bar{x}_{2n+1}:W_n\to K(\Z,2n+1)$ and $a_{2n}=\sigma(\bar{x}_{2n+1})$ represents its loop~\mbox{$\Omega\bar{x}_{2n+1}:\Omega W_n\to \Omega K(\Z,2n+1)$}. Similarly $a_{2n+2}=\sigma(\bar{x}_{2n+3})$ represents a loop map. This implies $a$ is a group homomorphism. Furthermore, $a_{2n}$ and $a_{2n+2}$ induce isomorphisms between $H^i(\Omega W_n)$ and $H^i(K(2n,\Z)\times K(2n+2,\Z))$ for $i=2n$ and $2n+2$. Since~\mbox{$[\C\PP^2\wedge\C\PP^{n-1},\Omega W_n]$} is a free $\Z$-module by Lemma~\ref{lemma_[CP CP] is free}, $a$ is a monomorphism. Consider the diagram
\begin{equation}\label{digm_injective Phi}
{\footnotesize\xymatrix{
\KK^0(\C\PP^2\wedge\C\PP^{n-1})\ar[r]^-{p_*}\ar@{=}[d]	&[\C\PP^2\wedge\C\PP^{n-1}, \Omega W_n]\ar[r]\ar[d]^-{a}						&[\C\PP^2\wedge\C\PP^{n-1}, SU(n)]\ar[d]^-{b}\ar[r]	&0\\
\KK^0(\C\PP^2\wedge\C\PP^{n-1})\ar[r]^-{\Phi}			&H^{2n}(\C\PP^2\wedge\C\PP^{n-1})\oplus H^{2n+2}(\C\PP^2\wedge\C\PP^{n-1})\ar[r]^-{\psi}	&Coker(\Phi)\ar[r]	&0
}}\end{equation}
In the left square, $\Phi$ is defined to be $a\circ p^*$. In the right square, $\psi$ is the quotient map and~$b$ is defined as follows. Any $f\in[\C\PP^2\wedge\C\PP^{n-1},SU(n)]$ has a preimage $\tilde{f}$ and $b(f)$ is defined to be $\psi(a(\tilde{f}))$. An easy diagram chase shows that $b$ is well-defined and injective. Since $b$ is injective, the order of $\gamma\in[\C\PP^2\wedge\C\PP^{n-1}, SU(n)]$ equals the order of $b(\gamma)\in Coker(\Phi)$. In \cite{HK03}, Hamanaka and Kono gave an explicit formula for $\Phi$.

\begin{thm}[Hamanaka, Kono, \cite{HK03}]\label{thm_Phi}
For any $f\in\KK^0(Y)$, we have
\[
\Phi(f)=n!ch_{2n}(f)\oplus(n+1)!ch_{2n+2}(f),
\]
where $ch_{2i}(f)$ is the $2i^{\text{th}}$ part of $ch(f)$.
\end{thm}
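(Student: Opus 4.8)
The plan is to unwind the definition $\Phi=a\circ p_*$ into a statement about universal cohomology classes and then reduce everything to one computation with the cohomology suspension on $BU$. By Bott periodicity there is a homotopy equivalence $\Omega SU(\infty)\simeq BU$ through which $[Y,\Omega SU(\infty)]=[\Sigma Y,SU(\infty)]\cong\KK^1(\Sigma Y)\cong\KK^0(Y)$, and under this identification a class $f\in\KK^0(Y)$, regarded as a map $f\colon Y\to BU\simeq\Omega SU(\infty)$, pulls back the universal Chern classes to $c_i(f)$ and the degree $2i$ component of the universal Chern character to $ch_{2i}(f)$. Since $p_*$ is induced by $\Omega p\colon\Omega SU(\infty)\to\Omega W_n$ and $a$ sends a map to the pullbacks of $a_{2n}$ and $a_{2n+2}$, naturality gives
\[
\Phi(f)=a(\Omega p\circ f)=f^*\bigl((\Omega p)^*a_{2n}\bigr)\oplus f^*\bigl((\Omega p)^*a_{2n+2}\bigr),
\]
so it suffices to identify the two universal classes $(\Omega p)^*a_{2n}\in H^{2n}(BU)$ and $(\Omega p)^*a_{2n+2}\in H^{2n+2}(BU)$.

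Next I would identify these as double cohomology suspensions of Chern classes. Since $a_{2i}$ is the transgression of $\bar x_{2i+1}$ in the path--loop fibration $\Omega W_n\to PW_n\to W_n$, equivalently $a_{2i}=\sigma(\bar x_{2i+1})$, and since $p^*\bar x_{2i+1}=x_{2i+1}=\sigma(c_{i+1})$, naturality of the cohomology suspension along the loop map $\Omega p$ yields
\[
(\Omega p)^*a_{2i}=(\Omega p)^*\sigma(\bar x_{2i+1})=\sigma(p^*\bar x_{2i+1})=\sigma(x_{2i+1})=\sigma^2(c_{i+1}),
\]
computed in $H^{2i}(\Omega^2 BSU(\infty))=H^{2i}(\Omega SU(\infty))=H^{2i}(BU)$. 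Hence the theorem reduces to showing $\sigma^2(c_{i+1})=i!\,ch_{2i}$ in $H^{2i}(BU)$ for $i=n$ and $i=n+1$, up to a sign absorbed into the choice of generators $a_{2n},a_{2n+2}$.

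For the core computation I would argue as follows. The image of the cohomology suspension consists of primitive elements, and $PH^{2i}(BU;\Z)$ is infinite cyclic, generated by the $i$-th Newton polynomial $s_i$ in the Chern classes, which satisfies $s_i=i!\,ch_{2i}$ in rational cohomology. Thus $\sigma^2(c_{i+1})=\lambda_i\,s_i$ for some $\lambda_i\in\Z$, and it remains to show $\lambda_i=\pm1$. I would pin this down by restricting to the bottom cell: a generator of $\pi_{2i+2}(BSU(\infty))\cong\pi_{2i+1}(SU(\infty))\cong\KK^0(S^{2i+2})$ is the Bott generator $\beta_{i+1}$, for which $ch_{2i+2}(\beta_{i+1})$ is a generator of $H^{2i+2}(S^{2i+2};\Z)$ by Bott integrality; combined with the classical relation $c_{i+1}=(-1)^{i}i!\,ch_{2i+2}+(\text{decomposables})$, which on a sphere loses the decomposables, this evaluates $\langle c_{i+1},\beta_{i+1}\rangle=(-1)^i i!$. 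Transporting this evaluation across the (double) suspension adjunction to $\pi_{2i}(BU)$ and comparing with $\langle s_i,\beta_i\rangle=i!$ forces $\lambda_i=\pm1$, and choosing the signs of $a_{2n},a_{2n+2}$ appropriately gives $\Phi(f)=n!\,ch_{2n}(f)\oplus(n+1)!\,ch_{2n+2}(f)$.

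I expect the main obstacle to be the sign and normalization bookkeeping in this last step: verifying that primitivity together with a single sphere-level evaluation really pins the coefficient to $\pm1$, and tracking the signs contributed by the two cohomology suspensions, by the conventions for transgression and for the generators of $H^{2i}(\Omega W_n)$, and by the relation between $c_{i+1}$ and $ch_{2i+2}$. These signs are immaterial for the order computations in Section~4 but must be handled honestly to state the formula as written. A secondary point deserving care is checking that the Bott equivalence $\Omega SU(\infty)\simeq BU$ genuinely intertwines the $K$-theoretic identification of $[Y,\Omega SU(\infty)]$ with the Chern character used above; this is classical but should be stated explicitly.
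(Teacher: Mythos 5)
The paper gives no proof of this statement: it is imported verbatim from Hamanaka--Kono \cite{HK03}, so there is nothing internal to compare against. Your reconstruction is correct and is essentially the argument of \cite{HK03}: reduce $\Phi=a\circ p_*$ by naturality to the two universal classes $(\Omega p)^*a_{2i}\in H^{2i}(\Omega SU(\infty))=H^{2i}(BU)$ for $i=n,n+1$; identify these with $\sigma^2(c_{i+1})$ via naturality of the cohomology suspension and $p^*\bar{x}_{2i+1}=x_{2i+1}=\sigma(c_{i+1})$; and then locate $\sigma^2(c_{i+1})$ inside $PH^{2i}(BU;\Z)\cong\Z$, which is generated by the Newton class $s_i=i!\,ch_{2i}$. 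The two points you flag are genuinely the only delicate ones, and both close as you expect: the coefficient $\lambda_i$ is pinned to $\pm1$ because the suspension image is primitive and both $\sigma^2(c_{i+1})$ and $s_i$ evaluate to $\pm i!$ times a generator on the Bott class of a sphere (Bott integrality, the Newton identity $c_{i+1}=(-1)^i i!\,ch_{2i+2}+\text{decomposables}$, and compatibility of $ch$ with Bott periodicity and double suspension); and the residual signs $(-1)^i$ can be absorbed into the choice of the generators $a_{2n},a_{2n+2}$, which is harmless for this paper since only $Im(\Phi)$ and the resulting orders are used. One small presentational suggestion: since the paper fixes $a_{2i}$ as the transgression of $\bar{x}_{2i+1}$, if you want the formula literally as stated rather than up to sign you should either track the $(-1)^i$ explicitly or note that replacing $a_{2i}$ by $(-1)^i a_{2i}$ changes nothing downstream.
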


Let $u$ and $v$ be the generators of $H^2(\C\PP^2)$ and $H^2(\C\PP^{n-1})$. For $1\leq i\leq n-1$, denote $L_i$ and $L'_i$ as the generators of $\KK^0(\C\PP^2\wedge\C\PP^{n-1})$ with Chern characters $ch(L_i)=u^2(e^v-1)^i$ and $ch(L'_i)=(u+\frac{1}{2}u^2)\cdot(e^v-1)^i$. By Theorem~\ref{thm_Phi} we have
\begin{eqnarray*}
\Phi(L_i)
&=&n(n-1)A_iu^2v^{n-2}+n(n+1)B_iu^2v^{n-1},\\
\Phi(L'_i)&=&\frac{n(n-1)}{2}A_iu^2v^{n-2}+nB_iuv^{n-1}+\frac{n(n+1)}{2}B_iu^2v^{n-1},
\end{eqnarray*}
where
\[
\begin{array}{c c c}
A_i=\sum^i_{j=1}(-1)^{i+j}\binom{i}{j}j^{n-2}
&\text{and}
&B_i=\sum^i_{j=1}(-1)^{i+j}\binom{i}{j}j^{n-1}.
\end{array}
\]

Write an element $xu^2v^{n-2}+yuv^{n-1}+zu^2v^{n-1}\in H^{2n}(\C\PP^2\wedge\C\PP^{n-1})\oplus H^{2n+2}(\C\PP^2\wedge\C\PP^{n-1})$ as $(x, y, z)$. Then the coordinates of $\Phi(L_i)$ and $\Phi(L'_i)$ are $(n(n-1)A_i, 0, n(n+1)B_i)$ and~$(\frac{n(n-1)}{2}A_i, nB_i, \frac{n(n+1)}{2}B_i)$ respectively.

\begin{lemma}\label{lemma_simplify span of Im Phi}
For $n\geq3$, $Im(\Phi)$ is spanned by $(\frac{n(n-1)}{2},n,\frac{n(n+1)}{2})$, $(n(n-1),0,0)$ and $(0,2n,0)$.
\end{lemma}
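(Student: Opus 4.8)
\hspace{1em} By the formulas recorded above, $Im(\Phi)$ is the subgroup of $H^{2n}(\C\PP^2\wedge\C\PP^{n-1})\oplus H^{2n+2}(\C\PP^2\wedge\C\PP^{n-1})\cong\Z^{\oplus3}$ generated by the $2(n-1)$ elements $\Phi(L_i)=(n(n-1)A_i,0,n(n+1)B_i)$ and $\Phi(L'_i)=(\tfrac{n(n-1)}{2}A_i,nB_i,\tfrac{n(n+1)}{2}B_i)$, $1\le i\le n-1$. Write $v_1=(\tfrac{n(n-1)}{2},n,\tfrac{n(n+1)}{2})$, $v_2=(n(n-1),0,0)$ and $v_3=(0,2n,0)$. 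The plan is to establish the two inclusions $\langle v_1,v_2,v_3\rangle\subseteq Im(\Phi)$ and $Im(\Phi)\subseteq\langle v_1,v_2,v_3\rangle$, each of which I expect to reduce to elementary arithmetic of the integers $A_i$ and $B_i$.

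For the inclusion $\langle v_1,v_2,v_3\rangle\subseteq Im(\Phi)$, I would first use $A_1=B_1=1$ (valid since $n\ge3$) to get $\Phi(L'_1)=v_1$ and $2\Phi(L'_1)-\Phi(L_1)=v_3$, so $v_1,v_3\in Im(\Phi)$. Only $v_2$ needs work. The key arithmetic fact is that $B_i-A_i$ is even for all $i$: writing $B_i-A_i=\sum_{j=2}^{i}(-1)^{i+j}\binom{i}{j}j^{n-2}(j-1)$, each summand is even because either $j$ is even (and then $j^{n-2}$ is even, as $n\ge3$) or $j$ is odd (and then $j-1$ is even). So I may put $k_i=\tfrac12(B_i-A_i)\in\Z$; a short computation should give $k_2=2^{n-3}$ and $k_3=3(3^{n-3}-2^{n-3})$, which are coprime for all $n\ge3$ (for $n\ge4$ because $3^{n-3}-2^{n-3}$ is odd, and $k_2=1$ when $n=3$). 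Since $\Phi(L'_i)-B_i\,\Phi(L'_1)=(-n(n-1)k_i,0,0)$, picking $s,t\in\Z$ with $sk_2+tk_3=1$ writes $-v_2$ as an integral combination of $\Phi(L'_1),\Phi(L'_2),\Phi(L'_3)$, so $v_2\in Im(\Phi)$; the indices $2,3$ lie in the allowed range for $n\ge4$, and only $i=1,2$ is needed when $n=3$.

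For the reverse inclusion I would check the identities
\[
\Phi(L_i)=A_i\,v_2+B_i(2v_1-v_2-v_3),\qquad \Phi(L'_i)=B_i\,v_1-k_i\,v_2,
\]
which follow at once from $2v_1-v_2-v_3=(0,0,n(n+1))$ together with $2k_i=B_i-A_i$. These present every generator of $Im(\Phi)$ as an integral combination of $v_1,v_2,v_3$, which finishes the proof.

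The only genuinely non-formal point is the arithmetic in the middle paragraph: identifying $A_i$ and $B_i$ as signed finite differences so that $B_i-A_i$ is manifestly even, and evaluating $k_2$ and $k_3$ to see that they are coprime. I expect this coprimality --- equivalently, the fact that two of the classes $\Phi(L'_i)$ already suffice to recover the ``pure'' direction $v_2=(n(n-1),0,0)$, which is invisible to $\Phi(L_1)$ and $\Phi(L'_1)$ alone --- to be the main obstacle; everything else is bookkeeping with generators of a rank-three free abelian group.
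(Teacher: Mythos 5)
Your proposal is correct and follows essentially the same route as the paper: isolate $(0,2n,0)$ from $\Phi(L_1),\Phi(L'_1)$, use the evenness of $B_i-A_i$ to write $\Phi(L'_i)-B_i\Phi(L'_1)$ as an integer multiple of $(n(n-1),0,0)$, and recover that vector from the coprimality of $k_2=2^{n-3}$ and $k_3=3^{n-2}-3\cdot2^{n-3}$. Your explicit treatment of $n=3$ (where $L'_3$ does not exist and $k_2=1$ already suffices) is a small point the paper's proof glosses over, but otherwise the two arguments coincide.
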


\begin{proof}
By definition, $Im(\Phi)=span\{\Phi(L_i),\Phi(L'_i)\}^{n-1}_{i=1}$. For $i=1$, $A_1=B_1=1$. Then
\begin{eqnarray*}
\Phi(L_1)
&=&(n(n-1), 0, n(n+1))\\
&=&2(\frac{1}{2}n(n-1), n, \frac{1}{2}n(n+1))-(0,2n,0)\\
&=&2\Phi(L'_1)-(0,2n,0)
\end{eqnarray*}
Equivalently $(0,2n,0)=2\Phi(L'_1)-\Phi(L_1)$, so $span\{\Phi(L_1),\Phi(L'_1)\}=span\{\Phi(L'_1),(0,2n,0)\}$. For other $i$'s,
\begin{eqnarray*}
\Phi(L_i)
&=&(n(n-1)A_i, 0, n(n+1)B_i)\\
&=&2(\frac{1}{2}n(n-1)A_i, nB_i, \frac{1}{2}n(n+1)B_i)-(0,2nB_i,0)\\
&=&2\Phi(L'_i)-B_i(0,2n,0)
\end{eqnarray*}
is a linear combination of $\Phi(L'_i)$ and $(0, 2n, 0)$, so $Im(\Phi)=span\{\Phi(L'_1),\cdots,\Phi(L'_{n-1}),(0,2n,0)\}$.

We claim that $span\{\Phi(L'_i)\}^{n-1}_{i=1}=span\{\Phi(L'_1), (n(n-1),0,0)\}$. Observe that
\begin{eqnarray*}
\Phi(L'_i)
&=&(\frac{n(n-1)}{2}A_i,nB_i,\frac{n(n+1)}{2}B_i)\\
&=&(\frac{n(n-1)}{2}B_i,nB_i,\frac{n(n+1)}{2}B_i)+(\frac{n(n-1)}{2}(A_i-B_i),0,0)\\
&=&B_i\Phi(L'_1)+\frac{A_i-B_i}{2}\cdot(n(n-1),0,0).
\end{eqnarray*}
The difference
\begin{eqnarray*}
A_i-B_i
&=&\sum^i_{j=1}(-1)^{i+j}\binom{i}{j}j^{n-2}-\sum^i_{j=1}(-1)^{i+j}\binom{i}{j}j^{n-1}\\
&=&\sum^i_{j=1}(-1)^{i+j+1}\binom{i}{j}(j^{n-1}-j^{n-2})\\
&=&\sum^i_{j=1}(-1)^{i+j+1}\binom{i}{j}(j-1)j^{n-2}
\end{eqnarray*}
is even since each term $(j-1)j^{n-2}$ is even and $n\geq3$. Therefore $\frac{A_i-B_i}{2}$ is an integer and~$\Phi(L'_i)$ is a linear combination of $\Phi(L'_1)$ and $(n(n-1),0,0)$.

Furthermore,
\begin{eqnarray*}
\Phi(L'_2)
&=&B_2\Phi(L'_1)+(A_2-B_2)(\frac{n(n-1)}{2},0,0)\\
&=&B_2\Phi(L'_1)-2^{n-3}(n(n-1),0,0)
\end{eqnarray*}
and
\begin{eqnarray*}
\Phi(L'_3)
&=&B_3\Phi(L'_1)+(A_3-B_3)(\frac{n(n-1)}{2},0,0)\\
&=&B_3\Phi(L'_1)-(3^{n-2}-3\cdot2^{n-3})(n(n-1),0,0).
\end{eqnarray*}
Since $2^{n-3}$ and $3^{n-2}-3\cdot2^{n-3}$ are coprime to each other, there exist integers $s$ and $t$ such that $2^{n-3}s+(3^{n-2}-3\cdot2^{n-3})t=1$ and
\[
(n(n-1),0,0)=(sB_2+tB_3)\Phi(L'_1)-s\Phi(L'_2)-t\Phi(L'_3).
\]
Therefore $(n(n-1),0,0)$ is a linear combination of $\Phi(L'_1),\Phi(L'_2)$ and $\Phi(L'_3)$. This implies $span\{\Phi(L'_1),(n(n-1),0,0)\}=span\{\Phi(L'_i)\}^{n-1}_{i=1}$.

Combine all these together to obtain
\begin{eqnarray*}
Im(\Phi)
&=&span\{\Phi(L_i),\Phi(L'_i)\}^{n-1}_{i=1}\\
&=&span\{\Phi(L'_1),(n(n-1),0,0),(0,2n, 0)\}\\
&=&span\{(\frac{n(n-1)}{2},n,\frac{n(n+1)}{2}),(n(n-1),0,0),(0,2n,0)\}.
\end{eqnarray*}
\end{proof}

Back to diagram~(\ref{digm_injective Phi}). The map $\gamma$ has a lift $\tilde{\gamma}:\C\PP^2\wedge\C\PP^{n-1}\to\Omega W_n$. By exactness, the order of $\gamma$ equals the minimum number $m$ such that $m\tilde{\gamma}$ is contained in $Im(p_*)$. Since~$a$ and~$b$ are injective, the order of $\gamma$ equals the minimum number $m'$ such that $m'a(\tilde{\gamma})$ is contained in $Im(\Phi)$.

\begin{lemma}\label{lemma_lifting}
Let $\alpha:\Sigma X\to SU(n)$ be a map for some space $X$. If $\alpha':\C\PP^2\wedge X\to SU(n)$ is the adjoint of the composition
\[
\C\PP^2\wedge\Sigma X\overset{q\wedge\one}{\longrightarrow}\Sigma S^3\wedge\Sigma X\overset{\Sigma\imath\wedge\alpha}{\longrightarrow}\Sigma SU(n)\wedge SU(n)\overset{[ev,ev]}{\longrightarrow}BSU(n),
\]
then there is a lift $\tilde{\alpha}$ of $\alpha'$ such that $\tilde{\alpha}^*(a_{2i})=u^2\otimes\Sigma^{-1}\alpha^*(x_{2i-3})$, where $\Sigma$ is the cohomology suspension isomorphism.
\[
\xymatrix{
	&\Omega W_n\ar[d]\\
\C\PP^2\wedge X\ar[r]^-{\alpha'}\ar@{-->}[ur]^-{\tilde{\alpha}}	&SU(n)
}\]
\end{lemma}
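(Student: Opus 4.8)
The plan is to first establish the existence of the lift $\tilde{\alpha}$ and then compute its effect on the cohomology classes $a_{2i}$. For the existence: by fibration~(\ref{exact seq_U fibration}), a map $\alpha':\C\PP^2\wedge X\to SU(n)$ lifts to $\Omega W_n$ precisely when the composite with $SU(n)\hookrightarrow SU(\infty)$ is null homotopic, or equivalently when $\alpha'$ pulls back the generators of $\HH{*}(SU(n))$ above a certain range appropriately; more directly, one observes that $\alpha'=\tau([ev,ev]\circ(\Sigma\imath\wedge\alpha)\circ(q\wedge\one))$ factors through a Whitehead product, and Whitehead products into $BSU(n)$ become null after composing with the inclusion $BSU(n)\to BSU(\infty)$ since $BSU(\infty)$ is an infinite loop space (Whitehead products vanish in H-spaces). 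Hence $\alpha'$ composed into $SU(\infty)$ is null, so a lift $\tilde{\alpha}:\C\PP^2\wedge X\to\Omega W_n$ exists.

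Next I would pin down $\tilde{\alpha}$ up to the indeterminacy coming from $p_*:\KK^0(\C\PP^2\wedge X)\to[\C\PP^2\wedge X,\Omega W_n]$, and compute $\tilde{\alpha}^*(a_{2i})$. The key point is that $a_{2i}$ is the transgression (cohomology suspension) of $x_{2i+1}$ in $W_n$, so $\Omega\bar{x}_{2i+1}:\Omega W_n\to K(\Z,2i)$ represents $a_{2i}$, and under adjunction the composite $\Sigma(\C\PP^2\wedge X)\xrightarrow{\Sigma\tilde{\alpha}}\Sigma\Omega W_n\to W_n\xrightarrow{\bar{x}_{2i+1}}K(\Z,2i+1)$ detects $\tilde{\alpha}^*(a_{2i})$ after one desuspension. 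Unwinding the adjunction, this composite is (up to the evaluation/counit) the map $\Sigma(\C\PP^2\wedge X)=\C\PP^2\wedge\Sigma X\xrightarrow{q\wedge\one} \Sigma S^3\wedge\Sigma X\xrightarrow{\Sigma\imath\wedge\alpha}\Sigma SU(n)\wedge SU(n)\xrightarrow{[ev,ev]}BSU(n)\to BSU(\infty)$ followed by the map representing $\sigma^{-1}$ of the relevant class. I would then compute the induced map on cohomology cell by cell: $q^*$ hits $u^2\in H^4(\C\PP^2)$, the factor $\Sigma\imath$ carries the bottom class of $\Sigma S^3$, the factor $\alpha$ contributes $\alpha^*(x_{2i-3})$ from the $SU(n)$ summand (note the degree bookkeeping: $x_{2i-3}$ lives in $\Sigma X$ after one desuspension, giving the factor $\Sigma^{-1}\alpha^*(x_{2i-3})$), and the Whitehead product $[ev,ev]$ together with the standard formula for how Whitehead products act on cohomology (the reduced diagonal) produces exactly the tensor product $u^2\otimes\Sigma^{-1}\alpha^*(x_{2i-3})$. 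This is essentially the computation carried out for the $S^4$ case in \cite{HK03, HK06}, transported across the cofibration $q:\C\PP^2\to S^4$; the factor $u^2$ replaces the top class of $S^4$ precisely because $q^*$ sends the generator of $H^4(S^4)$ to $u^2\in H^4(\C\PP^2)$.

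The main obstacle I expect is the careful bookkeeping of the adjunction isomorphisms and the sign/normalization conventions: there are three adjunctions floating around ($\rho'$ versus $\gamma$ versus $\tilde\alpha$) together with the swapping map $T$ and the suspension isomorphism $\Sigma$, and one must verify that the cohomology class $a_{2i}$, defined as a transgression in $W_n$, pulls back correctly through $p^*$ and the loop map $\Omega\bar{x}_{2i+1}$. Concretely, the subtle step is justifying that evaluating $\tilde\alpha^*$ on $a_{2i}$ is the same as evaluating the adjoint composite $BSU(\infty)$-map on the suspension of the relevant generator — this uses that $a_{2i}=\sigma(\bar x_{2i+1})$ and that $\bar x_{2i+1}$ restricts (via $p^*$) to $x_{2i+1}$, together with the fact that $[ev,ev]$ composed into $BSU(\infty)$ is detected on cohomology by the reduced coproduct. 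Once that identification is in hand, the rest is a direct Chern-character / cohomology-ring computation of the type already appearing in the quoted work of Hamanaka and Kono, so I would cite those computations rather than reproduce them.
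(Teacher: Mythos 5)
Your proposal is correct and follows essentially the same route as the paper: the lift is obtained from the Hamanaka--Kono lift $\Gamma:\Sigma SU(n)\wedge SU(n)\to W_n$ of $[ev,ev]$ (which exists because Whitehead products die in $BSU(\infty)$), whose formula $\Gamma^*(\bar{x}_{2i+1})=\sum_{j+k=i-1}\Sigma x_{2j+1}\otimes x_{2k+1}$ is cited from \cite{HK03,HK06} and then pulled back along $(\Sigma\imath\wedge\alpha)\circ(q\wedge\one)$ and adjointed. The only difference is organizational: the paper skips your separate abstract existence step and directly defines $\tilde{\alpha}$ as the adjoint of $\Gamma\circ(\Sigma\imath\wedge\alpha)\circ(q\wedge\one)\circ T$, which automatically produces the specific lift with the stated cohomology (as you note, this matters, since lifts differ by $Im(p_*)$).
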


\begin{proof}
In \cite{HK03,HK06}, Hamanaka and Kono constructed a lift $\Gamma:\Sigma SU(n)\wedge SU(n)\to W_n$ of $[ev,ev]$ such that $\Gamma^*(\bar{x}_{2i+1})=\sum_{j+k=i-1}\Sigma x_{2j+1}\otimes x_{2k+1}$. Let $\tilde{\Gamma}$ be the composition
\[
\tilde{\Gamma}:\C\PP^2\wedge\Sigma X\overset{q\wedge\one}{\longrightarrow}\Sigma S^3\wedge\Sigma X\overset{\Sigma\imath\wedge\alpha}{\longrightarrow}\Sigma SU(n)\wedge SU(n)\overset{\Gamma}{\longrightarrow}W_n.
\]
Then we have
\begin{eqnarray*}
\tilde{\Gamma}^*(\bar{x}_{2i+1})
&=&(q\wedge\one)^*(\Sigma\imath\wedge\alpha)^*\Gamma^*(\bar{x}_{2i+1})\\
&=&(q\wedge\one)^*(\Sigma\imath\wedge\alpha)^*\left(\sum_{j+k=i-1}\Sigma x_{2j+1}\otimes x_{2k+1}\right)\\
&=&(q\wedge\one)^*(\Sigma u_3\otimes\alpha^*(x_{2i-3}))\\
&=&u^2\otimes\alpha^*(x_{2i-3}),
\end{eqnarray*}
where $u_3$ is the generator of $H^3(S^3)$.

Let $T:\Sigma\C\PP^2\wedge X\to\C\PP^2\wedge\Sigma X$ be the swapping map and let $\tau:[\Sigma\C\PP^2\wedge X,W_n]\to[\C\PP^2\wedge X,\Omega W_n]$ be the adjunction. Take $\tilde{\alpha}:\C\PP^2\wedge X\to\Omega W_n$ to be the adjoint of $\tilde{\Gamma}$, that is $\tilde{\alpha}=\tau(\tilde{\Gamma}\circ T)$. Then $\tilde{\alpha}$ is a lift of $\alpha'$. Since
\[
(\tilde{\Gamma}\circ T)^*(\bar{x}_{2i+1})=T^*\circ\tilde{\Gamma}^*(\bar{x}_{2i+1})=T^*(u^2\otimes\alpha^*(x_{2i-3}))=\Sigma u^2\otimes\Sigma^{-1}\alpha^*(x_{2i-3}),
\]
we have $\tilde{\alpha}^*(a_{2i})=u^2\otimes\Sigma^{-1}\alpha^*(x_{2i-3})$.
\end{proof}

\begin{lemma}\label{lemma_tilde gamma coordinate}
In diagram~(\ref{digm_injective Phi}), $\gamma$ has a lift $\tilde{\gamma}$ such that $a(\tilde{\gamma})=u^2v^{n-2}\oplus u^2v^{n-1}$.
\end{lemma}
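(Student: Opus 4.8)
The plan is to recognize $\gamma$ as an instance of the map $\alpha'$ in Lemma~\ref{lemma_lifting} and then read off the cohomology of the lift it produces. Concretely, I would apply Lemma~\ref{lemma_lifting} with $X=\C\PP^{n-1}$ and $\alpha=\epsilon:\Sigma\C\PP^{n-1}\to SU(n)$. Unwinding the definitions, the composite
\[
\C\PP^2\wedge\Sigma\C\PP^{n-1}\overset{q\wedge\one}{\longrightarrow}\Sigma S^3\wedge\Sigma\C\PP^{n-1}\overset{\Sigma\imath\wedge\epsilon}{\longrightarrow}\Sigma SU(n)\wedge SU(n)\overset{[ev,ev]}{\longrightarrow}BSU(n)
\]
is exactly $\rho'$, and its adjoint in the sense of Lemma~\ref{lemma_lifting} is $\tau(\rho'\circ T)=\gamma$. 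So the lemma supplies a lift $\tilde{\gamma}:\C\PP^2\wedge\C\PP^{n-1}\to\Omega W_n$ of $\gamma$ with $\tilde{\gamma}^*(a_{2i})=u^2\otimes\Sigma^{-1}\epsilon^*(x_{2i-3})$, where $u\in H^2(\C\PP^2)$ is the generator and we use the Künneth identification $H^*(\C\PP^2\wedge\C\PP^{n-1})\cong \bar{H}^*(\C\PP^2)\otimes\bar{H}^*(\C\PP^{n-1})$.

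Next I would compute $\epsilon^*$ on the two generators that matter. Since $\epsilon_*:H_*(\Sigma\C\PP^{n-1})\to H_*(SU(n))$ is the inclusion of the generating set and $H^*(\Sigma\C\PP^{n-1})$ carries no decomposables, dualizing gives $\epsilon^*(x_{2j+1})=\Sigma v^{j}$ for $1\le j\le n-1$, with $v\in H^2(\C\PP^{n-1})$ the generator. Plugging $i=n$ into the formula above yields $\epsilon^*(x_{2n-3})=\Sigma v^{n-2}$, hence $\tilde{\gamma}^*(a_{2n})=u^2v^{n-2}$; plugging in $i=n+1$ yields $\epsilon^*(x_{2n-1})=\Sigma v^{n-1}$, hence $\tilde{\gamma}^*(a_{2n+2})=u^2v^{n-1}$. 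A degree check ($u^2v^{n-2}$ has degree $2n$, $u^2v^{n-1}$ has degree $2n+2$) confirms these live in the correct summands, so $a(\tilde{\gamma})=\tilde{\gamma}^*(a_{2n})\oplus\tilde{\gamma}^*(a_{2n+2})=u^2v^{n-2}\oplus u^2v^{n-1}$, which is the assertion.

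Essentially all the substance is already contained in Lemma~\ref{lemma_lifting} and in the standard description of $\epsilon^*$, so the only place demanding care is the first step: checking that the composite defining $\rho'$ matches the composite in Lemma~\ref{lemma_lifting} verbatim when $\alpha=\epsilon$, and that the adjunction $\tau$ and swapping map $T$ used to define $\gamma$ are the same ones used in the lemma, so that no stray sign or transposition slips in. I expect this bookkeeping — rather than any genuine difficulty — to be the main thing to pin down.
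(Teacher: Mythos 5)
Your proposal is correct and follows the same route as the paper: both apply Lemma~\ref{lemma_lifting} with $\alpha=\epsilon$ after identifying $\gamma$ as the adjoint of $\rho'$, and then evaluate $u^2\otimes\Sigma^{-1}\epsilon^*(x_{2i-3})$ at $i=n$ and $i=n+1$. The only difference is that you spell out the computation $\epsilon^*(x_{2j+1})=\Sigma v^j$, which the paper asserts without comment.
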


\begin{proof}
Recall that $\gamma$ is the adjoint of the composition
\[
\rho':\C\PP^2\wedge\Sigma\C\PP^{n-1}\overset{q\wedge\one}{\longrightarrow}\Sigma S^3\wedge\C\PP^{n-1}\overset{\Sigma\imath\wedge\epsilon}{\longrightarrow}\Sigma SU(n)\wedge SU(n)\overset{[ev,ev]}{\longrightarrow}BSU(n).
\]
Now we use Lemma~\ref{lemma_lifting} and take $\alpha$ to be $\epsilon:\Sigma\C\PP^{n-1}\to SU(n)$. Then $\gamma$ has a lift $\tilde{\gamma}$ such that $\tilde{\gamma}^*(a_{2i})=u^2\otimes\Sigma^{-1}\epsilon^*(x_{2i-3})=u^2\otimes v^{i-2}$. This implies
\[
a(\tilde{\gamma})=\tilde{\gamma}^*(a_{2n})\oplus\tilde{\gamma}^*(a_{2n+2})=u^2v^{n-2}\oplus u^2v^{n-1}.
\]
\end{proof}

Now we can calculate the order of $\partial'_1\circ\epsilon$, which gives a lower bound on the order of $\partial'_1$.

\begin{thm}\label{thm_order of gamma}
When $n\geq3$, the order of $\partial'_1\circ\epsilon$ is $\frac{1}{2}n(n^2-1)$ for $n$ odd and $n(n^2-1)$ for $n$ even.
\end{thm}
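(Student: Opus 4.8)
The plan is to reduce the computation of the order of $\partial'_1\circ\epsilon$ to an elementary problem in integer linear algebra, using the machinery already assembled. By the adjunction remarks preceding Lemma~\ref{lemma_[Sigma CP^n-1,Omega W_n]}, the order of $\partial'_1\circ\epsilon$ equals the order of $\gamma\in[\C\PP^2\wedge\C\PP^{n-1},SU(n)]$; and by the exactness argument following Lemma~\ref{lemma_simplify span of Im Phi}, together with the injectivity of $a$ and $b$ in diagram~(\ref{digm_injective Phi}), this order equals the least positive integer $m$ such that $m\cdot a(\tilde\gamma)$ lies in $Im(\Phi)$. So the first step is simply to put these facts side by side.

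Next I would substitute the explicit data. Lemma~\ref{lemma_tilde gamma coordinate} gives $a(\tilde\gamma)=u^2v^{n-2}\oplus u^2v^{n-1}$, that is, $a(\tilde\gamma)=(1,0,1)$ in the coordinates in which $(x,y,z)$ denotes $xu^2v^{n-2}+yuv^{n-1}+zu^2v^{n-1}$; and by Lemma~\ref{lemma_simplify span of Im Phi}, $Im(\Phi)=span\{(\tfrac{n(n-1)}{2},n,\tfrac{n(n+1)}{2}),\,(n(n-1),0,0),\,(0,2n,0)\}$. I must therefore find the smallest $m>0$ admitting integers $a,b,c$ with
\[
(m,0,m)=a\cdot(\tfrac{n(n-1)}{2},n,\tfrac{n(n+1)}{2})+b\cdot(n(n-1),0,0)+c\cdot(0,2n,0).
\]

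The heart of the argument is then the Diophantine analysis of this equation. Comparing middle coordinates forces $a=-2c$, so $a=2a'$ is even; comparing the first and third coordinates then gives $m=(a'+b)\,n(n-1)=a'\,n(n+1)$, equivalently $b(n-1)=2a'$ and $m=a'\,n(n+1)$. When $n$ is odd, $n-1$ is even, every integer $b$ is admissible, $a'=b(n-1)/2$, and $m=\tfrac{b}{2}n(n^2-1)$ runs through $\tfrac12 n(n^2-1)\cdot\Z$, whose least positive element is $\tfrac12 n(n^2-1)$ (an integer, as $n^2-1$ is even). When $n$ is even, $n-1$ is odd, so $a'\in\Z$ forces $b$ to be even, $b=2b'$, $a'=b'(n-1)$, and $m=b'\,n(n^2-1)$ runs through $n(n^2-1)\cdot\Z$, with least positive element $n(n^2-1)$. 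This is exactly the claimed value of the order, and Theorem~\ref{thm_main thm} then follows, since the order of $\partial'_1$ is bounded below by that of its restriction along $\epsilon$.

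I do not expect a serious obstacle: the topological input is entirely contained in the lemmas quoted above, and what remains is the bookkeeping just sketched. The one place that rewards care is the lower bound --- showing that no smaller $m$ occurs --- which comes down to the parity observations that $a$ must be even and, when $n$ is even, that $b$ must also be even; I would also note explicitly that the hypothesis $n\ge3$ is precisely what licenses the appeal to Lemma~\ref{lemma_simplify span of Im Phi}.
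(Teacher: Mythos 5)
Your proposal is correct and follows essentially the same route as the paper: reduce to the order of $\gamma$ via adjunction, use the injectivity of $a$ and $b$ in diagram~(\ref{digm_injective Phi}) together with Lemmas~\ref{lemma_simplify span of Im Phi} and~\ref{lemma_tilde gamma coordinate}, and solve the resulting linear Diophantine system, with the same parity observation (the coefficient of the first spanning vector must be even) supplying the lower bound. The only difference is cosmetic: the paper phrases the parity condition as $s=t(n-1)$ with $s$ even, whereas you write $b(n-1)=2a'$, and these are equivalent.
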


\begin{proof}
Since $\partial'_1\circ\epsilon$ is adjoint to $\gamma$ , it suffices to calculate the order of $\gamma$. By Lemma~\ref{lemma_simplify span of Im Phi},~$Im(\Phi)$ is spanned by $(\frac{1}{2}n(n-1),n,\frac{1}{2}n(n+1)),(n(n-1),0,0)$ and $(0,2n,0)$. By Lemma~\ref{lemma_tilde gamma coordinate}, $a(\tilde{\gamma})$ has coordinates $(1,0,1)$. Let $m$ be a number such that $ma(\tilde{\gamma})$ is contained in $Im(\Phi)$. Then
\[
m(1, 0, 1)=s(\frac{1}{2}n(n-1),n,\frac{1}{2}n(n+1))+t(n(n-1),0,0)+r(0,2n,0)
\]
for some integers $s, t$ and $r$. Solve this to get
\[
\begin{array}{c c c}
m=\frac{1}{2}tn(n^2-1),
&s=-2r,
&s=t(n-1).
\end{array}
\]
Since $s=-2r$ is even, the smallest positive value of $t$ satisfying $s=t(n-1)$ is 1 for $n$ odd and 2 for $n$ even. Therefore $m$ is $\frac{1}{2}n(n^2-1)$ for $n$ odd and $n(n^2-1)$ for $n$ even.
\end{proof}

For $SU(n)$-gauge groups over $S^4$, the order $m$ of $\partial_1$ has the form $m=n(n^2-1)$ for $n=3$ and $5$ \cite{HK06,theriault15}. If $p$ is an odd prime and $n<(p-1)^2+1$, then $m$ and $n(n^2-1)$ have the same $p$-components \cite{KKT14,theriault17}. These facts suggest it may be the case that $m=n(n^2-1)$ for any~$n>2$. In fact, one can follow the method Hamanaka and Kono used in \cite{HK06} and calculate the order of $\partial\circ\epsilon$ to obtain a lower bound $n(n^2-1)$ for $n$ odd. However, it does not work for the $n$ even case since $[S^4\wedge\C\PP^{n-1},\Omega W_n]$ is not a free $\Z$-module. An interesting corollary of Theorem~\ref{thm_order of gamma} is to give a lower bound on the order of $\partial_1$ for $n$ even.

\begin{cor}
When $n$ is even and greater than 2, the order of $\partial_1$ is at least $n(n^2-1)$.
\end{cor}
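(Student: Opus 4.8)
The plan is to deduce this formally from the $\C\PP^2$ computation of Theorem~\ref{thm_order of gamma} (equivalently Theorem~\ref{thm_main thm}) together with the comparison of orders in Lemma~\ref{lemma_partial'}. Write $m$ for the order of $\partial_1$ and $m'$ for the order of $\partial'_1$. Since $n$ is even and $n>2$ we have $n\geq 4$, so both results apply. First I would record that $m'\geq n(n^2-1)$, which is exactly Theorem~\ref{thm_main thm} in the even case. Then Lemma~\ref{lemma_partial'} says $m$ is either $m'$ or $2m'$, so in all cases $m\geq m'$, and combining the two inequalities gives $m\geq m'\geq n(n^2-1)$, as claimed.

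If one prefers to run the argument directly from Theorem~\ref{thm_order of gamma}, the only extra point is that the order of $\partial'_1$ dominates the order of its restriction $\partial'_1\circ\epsilon$. I would see this through adjunction: $\partial'_1\circ\epsilon$ corresponds to $\rho'$, which by construction factors as $\rho\circ(q\wedge\one)$, and since $q\wedge\one_{\Sigma\C\PP^{n-1}}$ is (canonically) a suspension, precomposition with it is a homomorphism between genuine abelian groups of the form $[-,BSU(n)]$ applied to suspensions. Hence $m'\partial'_1\simeq\ast$ forces $m'(\partial'_1\circ\epsilon)\simeq\ast$, so the order of $\partial'_1\circ\epsilon$ divides $m'$; by Theorem~\ref{thm_order of gamma} that order equals $n(n^2-1)$ when $n$ is even, so $m'\geq n(n^2-1)$ again.

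I expect no genuine obstacle: the corollary is a bookkeeping consequence of work already done. The one place that merits a sentence of justification is the inequality $m\geq m'$ together with the restriction step just above, and both are formal, resting on the naturality encoded in diagram~(\ref{digm_dfn of tilde partial}) and on the observation, central to this section, that $[\C\PP^2\wedge\C\PP^{n-1},\Omega W_n]$ is free (Lemma~\ref{lemma_[CP CP] is free}) while the corresponding group over $S^4$ is not; that is precisely why the bound for $n$ even is obtainable over $\C\PP^2$ and must then be transported back to $\partial_1$ via $q^*$.
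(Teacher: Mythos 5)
Your argument is correct and is essentially the paper's own proof: both deduce the bound by noting that the order of $\partial'_1\circ\epsilon$ (computed in Theorem~\ref{thm_order of gamma} to be $n(n^2-1)$ for $n$ even) bounds the order $m'$ of $\partial'_1$ from below, and that $m\geq m'$ by Lemma~\ref{lemma_partial'}. Your extra justification of the restriction step via the suspension $q\wedge\one$ is a fine way to make explicit what the paper treats as immediate.
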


\begin{proof}
The order of $\partial'_1\circ\epsilon$ is a lower bound on the order of $\partial'_1$, which is either the same as or half of the order of $\partial_1$ by Lemma~\ref{lemma_partial'}. The corollary follows from Theorem~\ref{thm_order of gamma}.
\end{proof}

\section{A necessary condition for $\G_k(\C\PP^2)\simeq\G_l(\C\PP^2)$}
In this section we follow the approach in \cite{HK06} to prove Theorem~\ref{thm_necessary condition}. The techniques used are similar to that in Section 4, except we are working with the quotient $\Sigma C=\Sigma\C\PP^{n-1}/\Sigma\C\PP^{n-1}$ instead of $\Sigma\C\PP^{n-1}$. When $n$ is odd, $C$ is $\Sigma^{2n-6}\C\PP^2$, and when $n$ is even, $C$ is $S^{2n-2}\vee S^{2n-4}$. Apply $[\Sigma C,-]$ to fibration~(\ref{fib_Gk(CP2)}) to obtain the exact sequence
\[
[\Sigma C, SU(n)]\overset{(\partial'_k)_*}{\longrightarrow}[\Sigma C, \map^*_0(\C\PP^2,BSU(n))]\longrightarrow[\Sigma C, B\G_k(\C\PP^2)]\longrightarrow[\Sigma C, BSU(n)],
\]
where $(\partial'_k)_*$ sends $f\in[\Sigma C, SU(n)]$ to $\partial'_k\circ f\in[\Sigma C, \map^*_0(\C\PP^2, BSU(n))]$. Since $BSU(n)\to BSU(\infty)$ is a $2n$-equivalence and $\Sigma C$ has dimension $2n-1$, $[\Sigma C, BSU(n)]$ is $\KK^0(\Sigma C)$ which is zero. Similarly, $[\Sigma C, SU(n)]\cong[\Sigma^2C, BSU(n)]$ is $\KK^0(\Sigma^2C)\cong\Z\oplus\Z$. Furthermore, by adjunction we have $[\Sigma C, \map^*_0(\C\PP^2,BSU(n))]\cong[\Sigma C\wedge\C\PP^2, BSU(n)]$. The exact sequence becomes
\begin{equation}\label{exact seq_necessary Sigma C}
\KK^0(\Sigma^2C)\overset{(\partial'_k)_*}{\longrightarrow}[\Sigma C\wedge\C\PP^2, BSU(n)]\longrightarrow[\Sigma C, B\G_k(\C\PP^2)]\longrightarrow0.
\end{equation}
This implies $[\Sigma C, B\G_k(\C\PP^2)]\cong[C, \G_k(\C\PP^2)]$ is $Coker(\partial'_k)_*$. Also, apply $[\C\PP^2\wedge C, -]$ to fibration~(\ref{exact seq_U fibration}) to obtain the exact sequence
\begin{equation}\label{exact seq_necessary CP2 C}
[\C\PP^2\wedge C, \Omega SU(\infty)]\overset{p_*}{\longrightarrow}[\C\PP^2\wedge C, \Omega W_n]\longrightarrow[\C\PP^2\wedge C, SU(n)]\longrightarrow[\C\PP^2\wedge C, SU(\infty)].
\end{equation}

Observe that $[\C\PP^2\wedge C, \Omega SU(\infty)]\cong\KK^0(\C\PP^2\wedge C)$ is $\Z^{\oplus4}$ and~\mbox{$[\C\PP^2\wedge C, SU(\infty)]\cong\KK^1(\C\PP^2\wedge C)$} is zero. Combine exact sequences~(\ref{exact seq_necessary Sigma C}) and~(\ref{exact seq_necessary CP2 C}) to obtain the diagram
\[
\xymatrix{
	&\KK^0(\C\PP^2\wedge C)\ar[d]^-{p_*}\ar[dr]^-{\Phi}	&	&\\
	&[\C\PP^2\wedge C, \Omega W_n]\ar[d]\ar[r]^-{a}	&H^{2n}(\C\PP^2\wedge C)\oplus H^{2n+2}(\C\PP^2\wedge C)	&\\
\KK^0(\Sigma^2C)\ar[r]^-{(\partial'_k)_*}	&[\C\PP^2\wedge C, SU(n)]\ar[r]\ar[d]	&[C, B\G_k(\C\PP^2)]\ar[r]	&0\\
	&0	&	&
}\]
where $a(f)=f^*(a_{2n})\oplus f^*(a_{2n+2})$ for any $f\in[\C\PP^2\wedge C, \Omega W_n]$, and $\Phi$ is defined to be $a\circ p_*$. By Lemma~\ref{lemma_[CP CP] is free} $[\C\PP^2\wedge C,\Omega W_n]$ is free. Following the same argument in Section 4 implies the injectivity of $a$.

Our strategy to prove Theorem~\ref{thm_necessary condition} is as follows. If $\G_k(\C\PP^2)$ is homotopy equivalent to~$\G_l(\C\PP^2)$, then $[C, \G_k(\C\PP^2)]\cong[C, \G_l(\C\PP^2)]$ and exactness in~(\ref{exact seq_necessary CP2 C}) implies that $Im(\partial'_k)_*$ and~$Im(\partial'_l)_*$ have the same order in $[\C\PP^2\wedge C, SU(n)]$, resulting in a necessary condition for a homotopy equivalence $\G_k(\C\PP^2)\simeq\G_l(\C\PP^2)$. To calculate the order of $Im(\partial'_k)_*$, we will find a preimage $\tilde{\partial}_k$ of $Im(\partial'_k)_*$ in $[\C\PP^2\wedge C,\Omega W_n]$. Since $a$ is injective, we can embed $\tilde{\partial}_k$ into~$H^{2n}(\C\PP^2\wedge C)\oplus H^{2n+2}(\C\PP^2\wedge C)$ and work out the order of $Im(\partial'_k)_*$ there.

Let $u, v_{2n-4}$ and $v_{2n-2}$ be generators of $H^2(\C\PP^2)$, $H^{2n-4}(C)$ and $H^{2n-2}(C)$. Then we write an element $xu^2v_{2n-4}+yuv_{2n-2}+zu^2v_{2n-2}\in H^{2n}(\C\PP^2\wedge C)\oplus H^{2n+2}(\C\PP^2\wedge C)$ as $(x, y, z)$. First we need to find the submodule $Im(a)$.

\begin{lemma}\label{lemma_im lambda}
For $n$ odd, $Im(a)$ is $\{(x, y, z)|x+y\equiv z\pmod{2}\}$, and for $n$ even, $Im(a)$ is~$\{(x, y, z)|y\equiv0\pmod{2}\}$.
\end{lemma}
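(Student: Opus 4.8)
The plan is to compute $Im(a)$ by chasing the exact sequence~(\ref{exact seq_necessary CP2 C}) together with the defining formula $\Phi = a\circ p_*$ and the Hamanaka--Kono formula from Theorem~\ref{thm_Phi}. Since $a$ is injective on the free group $[\C\PP^2\wedge C,\Omega W_n]\cong\Z^{\oplus 3}$ (Lemma~\ref{lemma_[CP CP] is free}), the key observation is that $Im(a)$ is a rank-$3$ subgroup of the rank-$3$ group $H^{2n}(\C\PP^2\wedge C)\oplus H^{2n+2}(\C\PP^2\wedge C)\cong\Z^{\oplus 3}$, so it is finite index and completely determined by a congruence condition. The point is that $Im(a)$ contains $Im(\Phi)=Im(a\circ p_*)$, and in fact $Im(a)$ is generated by $Im(\Phi)$ together with any lifts to $\Omega W_n$ of generators of the torsion part of $[\C\PP^2\wedge C,\Omega W_n]/Im(p_*)$ — but since everything in sight is free, it is cleaner to argue directly.

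First I would note that for $n$ odd, $\C\PP^2\wedge C=\C\PP^2\wedge\Sigma^{2n-6}\C\PP^2 = \Sigma^{2n-6}(\C\PP^2\wedge\C\PP^2)$, so by the suspension isomorphism the computation of $Im(a)$ is identical to the one implicit in Section~4 for $\C\PP^2\wedge\C\PP^{n-1}$ (after the reduction to $C$), and Lemma~\ref{lemma_simplify span of Im Phi} already tells us $Im(\Phi)$ is spanned by $(\tfrac{1}{2}n(n-1),n,\tfrac{1}{2}n(n+1))$, $(n(n-1),0,0)$, $(0,2n,0)$. I would then identify the generators of $[\C\PP^2\wedge C,\Omega W_n]$ mapping under $a$ to lattice points, using Lemma~\ref{lemma_lifting}: the generator $\epsilon$-type lift has $a$-image with coordinates $(1,0,1)$ (as in Lemma~\ref{lemma_tilde gamma coordinate}), which already forces $x+y\equiv z\pmod 2$ on that generator, and then I would exhibit two further generators whose $a$-images are, say, $(0,1,1)$ (or some such) coming from the $L'$-classes divided appropriately, and check the three generate exactly $\{(x,y,z)\mid x+y\equiv z\pmod 2\}$ — equivalently, that this index-$2$ sublattice is spanned by $(1,0,1),(0,1,1),(0,0,2)$ or similar, which is a finite $3\times 3$ determinant check. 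For $n$ even, $C=S^{2n-2}\vee S^{2n-4}$, so $\C\PP^2\wedge C = \Sigma^{2n-4}\C\PP^2 \vee \Sigma^{2n-6}\C\PP^2$ and $[\C\PP^2\wedge C,\Omega W_n]$ splits as the direct sum $[\Sigma^{2n-2}\C\PP^2,\Omega W_n]\oplus[\Sigma^{2n-4}\C\PP^2,\Omega W_n]\cong(\Z\oplus\Z)\oplus\Z$; here I would compute $a$ on each summand separately using the structure of $\pi_*(W_n)$ from Table~(\ref{table_pi Wn}), the point being that on the $S^{2n-2}$-smash-$S^4$ wedge summand the relevant homotopy $\pi_{2n+2}(W_n)=\Z/2$ gives the constraint $y\equiv 0\pmod 2$ while the $S^4$-factor (the $u^2$-part) contributes freely.

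The main obstacle I expect is bookkeeping the $\Z/2$ contributions precisely: in the $n$ even case one must be careful which generator of $\pi_{2n+3}(W_n)\cong\Z\oplus\Z/2\Z$ survives to $[\Sigma^{2n-2}\C\PP^2,\Omega W_n]$ and what its $a$-coordinates are (Lemma~\ref{lemma_[Sigma CP^n-1,Omega W_n]} establishes the group is $\Z\oplus\Z$, but pinning down the lattice embedding under $a$ requires tracing through the maps $\eta_1,\eta_2$ and the $\Gamma$-lift of Lemma~\ref{lemma_lifting}). In the $n$ odd case the subtlety is instead checking that the half-integer coordinate $\tfrac{1}{2}n(n-1)$ etc.\ really do land where claimed modulo $2$, which amounts to reducing the binomial sums $A_i,B_i$ modulo small powers of $2$ — this is the same parity analysis as in the proof of Lemma~\ref{lemma_simplify span of Im Phi} (the identity $A_i-B_i$ even), so it should go through routinely. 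I would organize the proof as two cases ($n$ odd, $n$ even), in each case exhibiting an explicit $\Z$-basis of $[\C\PP^2\wedge C,\Omega W_n]$, computing its image under $a$ via Lemma~\ref{lemma_lifting} and Theorem~\ref{thm_Phi}, and verifying by an elementary Smith-normal-form computation that the span is the asserted congruence sublattice.
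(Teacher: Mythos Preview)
Your plan has a genuine gap: you have no mechanism for the \emph{upper bound} on $Im(a)$, i.e., for showing that $Im(a)$ is \emph{contained} in the stated congruence sublattice. The tools you invoke---Lemma~\ref{lemma_lifting}, Theorem~\ref{thm_Phi}, and the image $Im(\Phi)$---only produce specific elements \emph{inside} $Im(a)$, so they can give at best a lower bound. In particular, the elements of $Im(\Phi)$ have coordinates divisible by $\tfrac{1}{2}n(n-1)$, $n$, $\tfrac{1}{2}n(n+1)$ (see Lemma~\ref{lemma_simplify span of Im Phi}), so they are far from primitive in $\Z^{\oplus 3}$; you cannot ``divide the $L'$-classes appropriately'' to obtain something like $(0,1,1)$. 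Lemma~\ref{lemma_lifting} does yield the single primitive element $(1,0,1)$ via the $\epsilon$-lift, but to complete a $\Z$-basis of $[\C\PP^2\wedge C,\Omega W_n]\cong\Z^{\oplus 3}$ you would still need two more maps together with an independent argument that the three of them generate the whole group---and that is precisely the content of the lemma. Your proposed Smith-normal-form check only confirms the span of whatever elements you feed it; it cannot certify that nothing lies outside.

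The paper supplies the missing upper bound with Steenrod squares: reducing mod $2$ and applying $Sq^2$ to $f^*(a_{2n})=xu^2v_{2n-4}+yuv_{2n-2}$ forces the congruence, using $Sq^2u=u^2$ together with the fact that $Sq^2v_{2n-4}$ and $Sq^2a_{2n}$ are nonzero or zero according to whether the relevant top cells of $C$ and of the low skeleton of $\Omega W_n$ are attached by $\eta$---which is exactly what depends on the parity of $n$. For the lower bound the paper does not use Lemma~\ref{lemma_lifting} at all; it writes down explicit geometric maps $\C\PP^2\wedge C\to\Omega W_n$ built from quotients and skeletal inclusions (e.g., for $n$ odd the $(2n+3)$-skeleton of $\Omega W_n$ is $\Sigma^{2n-2}\C\PP^2$, so one maps $\C\PP^2\wedge C\twoheadrightarrow S^4\wedge C\cong\Sigma^{2n-2}\C\PP^2\hookrightarrow\Omega W_n$ directly to realise $(1,0,1)$, and similarly for the other generators). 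Your wedge-splitting idea for $n$ even is on the right track for producing such maps, but it still needs the $Sq^2$ constraint to rule out $y$ odd.
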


\begin{proof}
When $n$ is odd, $C$ is $\Sigma^{2n-6}\C\PP^2$ and the $(2n+3)$-skeleton of $\Omega W_n$ is $\Sigma^{2n-2}\C\PP^2$. To say $(x, y, z)\in Im(a)$ means there exists $f\in[\C\PP^2\wedge C,\Omega W_n]$ such that
\begin{equation}\label{equation in proof of lemma Im a}
\begin{array}{c c c}
f^*(a_{2n})=xu^2v_{2n-4}+yuv_{2n-2}
&\text{and}
&f^*(a_{2n+2})=zu^2v_{2n-2}.
\end{array}
\end{equation}
Reducing to homology with $\Z/2\Z$-coefficients, we have
\[
\begin{array}{c c c}
Sq^2(u)=u^2,
&Sq^2(v_{2n-4})=v_{2n-2},
&Sq^2(a_{2n})=a_{2n+2}.
\end{array}
\]
Apply $Sq^2$ to~(\ref{equation in proof of lemma Im a}) to get $x+y\equiv z\pmod{2}$. Therefore $Im(a)$ is contained in $\{(x, y, z)|x+y\equiv z\pmod{2}\}$. To show that they are equal, we need to show that $(1, 0, 1),(0, 1, 1)$ and $(0, 0, 2)$ are in $Im(a)$. Consider maps
\[
\begin{array}{l}
f_1:\C\PP^2\wedge C\overset{q_1}{\longrightarrow}S^4\wedge C\simeq\Sigma^{2n-2}\C\PP^2\hookrightarrow\Omega W_n\\
f_2:\C\PP^2\wedge C\overset{q_2}{\longrightarrow}\C\PP^2\wedge S^{2n-2}\hookrightarrow\Omega W_n\\
f_3:\C\PP^2\wedge C\overset{q_3}{\longrightarrow}S^{2n+2}\overset{\theta}{\longrightarrow}\Omega W_n
\end{array}
\]
where $q_1,q_2$ and $q_3$ are quotient maps and $\theta$ is the generator of $\pi_{2n+3}(W_n)$. Their images are
\[
\begin{array}{c c c}
a(f_1)=(1, 0, 1)
&a(f_2)=(0, 1, 1)
&a(f_3)=(0, 0, 2)
\end{array}
\]
respectively, so $Im(a)=\{(x, y, z)|x+y\equiv z\pmod{2}\}$.

When $n$ is even, $C$ is $S^{2n-2}\vee S^{2n-4}$ and the $(2n+3)$-skeleton of $\Omega W_n$ is $S^{2n+2}\vee S^{2n}$. Reducing to homology with $\Z/2\Z$-coefficients, $Sq^2(v_{2n-4})=0$ and $Sq^2(a_{2n})=0$. Apply $Sq^2$ to~(\ref{equation in proof of lemma Im a}) to get $y\equiv 0\pmod{2}$. Therefore $Im(a)$ is contained in $\{(x, y, z)|y\equiv0\pmod{2}\}$. To show that they are equal, we need to show that $(1, 0, 0),(0, 2, 0)$ and $(0, 0, 1)$ are in $Im(a)$. The maps
\[
\begin{array}{l}
f'_1:\C\PP^2\wedge C\overset{q'_1}{\longrightarrow}S^4\wedge(S^{2n-2}\vee S^{2n-4})\overset{p_1}{\longrightarrow}S^4\wedge S^{2n-4}\hookrightarrow\Omega W_n\\
f'_2:\C\PP^2\wedge C\overset{q'_2}{\longrightarrow}S^4\wedge(S^{2n-2}\vee S^{2n-4})\overset{p_2}{\longrightarrow}S^4\wedge S^{2n-2}\hookrightarrow\Omega W_n
\end{array}
\]
where $q'_1$ and $q'_2$ are quotient maps and $p_1$ and $p_2$ are pinch maps, have images $a(f'_1)=(1, 0, 0)$ and $a(f'_2)=(0, 0, 1)$. To find $(0, 2, 0)$, apply $[-\wedge S^{2n-2},\Omega W_n]$ to cofibration~(\ref{cofib_CP2}) to obtain the exact sequence
\[
\pi_{2n+3}(W_n)\longrightarrow[\C\PP^2\wedge S^{2n-2},\Omega W_n]\overset{i^*}{\longrightarrow}\pi_{2n+1}(W_n)\overset{\eta^*}{\longrightarrow}\pi_{2n+2}(W_n)
\]
where $i^*$ is induced by the inclusion $i:S^2\hookrightarrow\C\PP^2$ and $\eta^*$ is induced by Hopf map $\eta$. The third term $\pi_{2n+1}(W_n)\cong\Z$ is generated by $i':S^{2n+1}\to W_n$, the inclusion of the bottom cell, and the fourth term $\pi_{2n+2}(W_n)\cong\Z/2\Z$ is generated by $i'\circ\eta$, so $\eta^*:\Z\to\Z/2\Z$ is a surjection. By exactness $[\C\PP^2\wedge S^{2n-2},\Omega W_n]$ has a $\Z$-summand with the property that $i^*$ sends its generator $g$ to $2i'$. Therefore the composition
\[
f'_3:\C\PP^2\wedge(S^{2n-2}\vee S^{2n-4})\overset{pinch}{\longrightarrow}\C\PP^2\wedge S^{2n-2}\overset{g}{\longrightarrow}\Omega W_n
\]
has image $(0,2,0)$. It follows that $Im(a)=\{(x, y, z)|y\equiv0\pmod{2}\}$.
\end{proof}

Now we split into the $n$ odd and $n$ even cases to calculate the order of $Im(\partial'_k)_*$.

\subsection{The order of $Im(\partial'_k)_*$ for $n$ odd}
When $n$ is odd, $C$ is $\Sigma^{2n-6}\C\PP^2$. First we find~$Im(\Phi)$ in $Im(a)$. For $1\leq i\leq4$, let $L_i$ be the generators of $\KK^0(\C\PP^2\wedge C)\cong\Z^{\oplus 4}$ with Chern characters
\[
\begin{array}{l l}
ch(L_1)=(u+\frac{1}{2}u^2)\cdot(v_{2n-4}+\frac{1}{2}v_{2n-2})
&ch(L_2)=(u+\frac{1}{2}u^2)v_{2n-2}\\[15pt]
ch(L_3)=u^2(v_{2n-4}+\frac{1}{2}v_{2n-2})
&ch(L_4)=u^2v_{2n-2}.
\end{array}
\]
By Theorem~\ref{thm_Phi}, we have
\begin{eqnarray*}
\Phi(L_1)&=&\frac{n!}{2}u^2v_{2n-4}+\frac{n!}{2}uv_{2n-2}+\frac{(n+1)!}{4}u^2v_{2n-2}\\
\Phi(L_2)&=&n!uv_{2n-2}+\frac{(n+1)!}{2}u^2v_{2n-2}\\
\Phi(L_3)&=&n!u^2v_{2n-4}+\frac{(n+1)!}{2}u^2v_{2n-2}\\
\Phi(L_4)&=&(n+1)!u^2v_{2n-2}.
\end{eqnarray*}

By Lemma~\ref{lemma_im lambda}, $Im(a)$ is spanned by $(1, 0, 1), (0, 1, 1)$ and $(0, 0, 2)$. Under this basis, the coordinates of the $\Phi(L_i)$'s are
\[
\begin{array}{l l}
\Phi(L_1)=(\frac{n!}{2},\frac{n!}{2},\frac{(n-3)\cdot n!}{8}),
&\Phi(L_2)=(0,n!,\frac{(n-1)\cdot n!}{4}),\\[15pt]
\Phi(L_3)=(n!,0,\frac{(n-1)\cdot n!}{4}),
&\Phi(L_4)=(0,0,\frac{(n+1)!}{2}).
\end{array}
\]
We represent their coordinates by the matrix
\[
M_{\Phi}=L
\begin{pmatrix}
\frac{n(n-1)}{2}	&\frac{n(n-1)}{2}	&\frac{n(n-1)(n-3)}{8}\\
0					&n(n-1)				&\frac{n(n-1)^2}{4}\\
n(n-1)				&0					&\frac{n(n-1)^2}{4}\\
0					&0					&\frac{n(n^2-1)}{2}
\end{pmatrix},
\]
where $L=(n-2)!$. Then $Im(\Phi)$ is spanned by the row vectors of $M_{\Phi}$.

Next, we find a preimage of $Im(\partial'_k)_*$ in $[\C\PP^2\wedge C,\Omega W_n]$. In exact sequence~(\ref{exact seq_necessary Sigma C}) $\KK^0(\Sigma^2C)$ is $\Z\oplus\Z$. Let $\alpha_1$ and $\alpha_2$ be its generators with Chern classes
\[
\begin{array}{l l}
c_{n-1}(\alpha_1)=(n-2)!\Sigma^2v_{2n-4}	&c_n(\alpha_1)=\frac{(n-1)!}{2}\Sigma^2v_{2n-2}\\
c_{n-1}(\alpha_2)=0							&c_n(\alpha_2)=(n-1)!\Sigma^2v_{2n-2}.
\end{array}
\]

\begin{lemma}\label{lemma_lift of alpha}
For $i=1,2$, $\xi_k(\alpha_i)$ has a lift $\tilde{\alpha}_{i,k}:\C\PP^2\wedge C\to\Omega W_n$ such that
\[
a(\tilde{\alpha}_{i,k})=ku^2\otimes\Sigma^{-2}c_{n-1}(\alpha_i)\oplus ku^2\otimes\Sigma^{-2}c_{n}(\alpha_i).
\]
\end{lemma}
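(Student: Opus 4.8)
The plan is to realize $\tilde{\alpha}_{i,k}$ as the output of the construction in Lemma~\ref{lemma_lifting} applied to $\alpha_i$, and then to rewrite the cohomology classes it produces in terms of the Chern classes of $\alpha_i$. The first task is to identify $\xi_k(\alpha_i)$ --- the image of $\alpha_i\in\KK^0(\Sigma^2 C)\cong[\Sigma C, SU(n)]$ under the adjointed map $(\partial'_k)_*$ --- with an explicit composition. Since $\partial'_k=q^*\partial_k$ and $\partial_k$ is the triple adjoint of $\sm{k\imath,\one}=k\sm{\imath,\one}$, unravelling the adjunctions exactly as in the paragraph preceding Lemma~\ref{lemma_lifting} shows that $\xi_k(\alpha_i):\C\PP^2\wedge C\to SU(n)$ is the adjoint of
\[
\C\PP^2\wedge\Sigma C\xrightarrow{q\wedge\one}\Sigma S^3\wedge\Sigma C\xrightarrow{\Sigma(k\imath)\wedge\alpha_i}\Sigma SU(n)\wedge SU(n)\xrightarrow{[ev,ev]}BSU(n).
\]
This is precisely the map called $\alpha'$ in Lemma~\ref{lemma_lifting} for $X=C$ and $\alpha=\alpha_i$, with the single modification that the bottom cell inclusion $\imath$ is replaced by $k\imath$.

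Next I would re-run the proof of Lemma~\ref{lemma_lifting} on this input. Composing the lift $\Gamma:\Sigma SU(n)\wedge SU(n)\to W_n$ of $[ev,ev]$ used there with $\Sigma(k\imath)\wedge\alpha_i$ and $q\wedge\one$, and then adjointing through the swap map, produces a lift $\tilde{\alpha}_{i,k}:\C\PP^2\wedge C\to\Omega W_n$ of $\xi_k(\alpha_i)$. The only change in the cohomology computation is that $\imath^*(x_3)=u_3$ is replaced by $(k\imath)^*(x_3)=ku_3$, which multiplies the entire output by $k$; this gives
\[
\tilde{\alpha}_{i,k}^*(a_{2j})=ku^2\otimes\Sigma^{-1}\alpha_i^*(x_{2j-3})
\]
for all $j$, the only nonzero cases being $j=n$ and $j=n+1$, since $\Sigma C$ has cohomology concentrated in degrees $2n-3$ and $2n-1$.

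It then remains to convert $\alpha_i^*(x_{2j-3})$ into a Chern class. Here I would use that $x_{2j-3}=\sigma(c_{j-1})$ and that $\alpha_i$, as a map $\Sigma C\to SU(n)\simeq\Omega BSU(n)$, is the loop-adjoint of the element $\alpha_i\in\KK^0(\Sigma^2 C)$; naturality of the cohomology suspension then gives $\alpha_i^*(x_{2j-3})=\Sigma^{-1}c_{j-1}(\alpha_i)$, where $c_{j-1}(\alpha_i)\in H^{2j-2}(\Sigma^2 C)$ is the Chern class occurring in the definition of $\alpha_i$. Substituting $j=n$ and $j=n+1$ and combining with the previous step yields
\[
a(\tilde{\alpha}_{i,k})=\tilde{\alpha}_{i,k}^*(a_{2n})\oplus\tilde{\alpha}_{i,k}^*(a_{2n+2})=ku^2\otimes\Sigma^{-2}c_{n-1}(\alpha_i)\oplus ku^2\otimes\Sigma^{-2}c_n(\alpha_i),
\]
which is the asserted formula.

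Everything concerning compatibility of the adjunction and swap isomorphisms with the cohomology suspension, and the fact that looping and adjointing a lift through $W_n\hookrightarrow BSU(n)$ yields a lift through $\Omega W_n\to SU(n)$ in fibration~(\ref{exact seq_U fibration}), is word-for-word the same as in the proof of Lemma~\ref{lemma_lifting}, and I would simply cite it. The one step that needs genuine care is the first: verifying that the scalar $k$ coming from $\sm{k\imath,\one}=k\sm{\imath,\one}$ may be extracted while $\partial'_k$ still lies in the group $Im(q^*)$ and not yet in $\map^*_0(\C\PP^2,BSU(n))$, which carries no H-space structure, so that the identification $\xi_k(\alpha_i)=k\cdot\xi_1(\alpha_i)$ is legitimate. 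After that, the argument is pure bookkeeping with the two cells of $\C\PP^2\wedge C$.
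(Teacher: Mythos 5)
Your proposal is correct and follows essentially the same route as the paper: lift $\alpha_i$ to $BSU(n)$, take its adjoint $\Sigma C\to SU(n)$, identify $(\partial'_k)_*(\alpha_i)$ with the adjoint of the composite through $\Sigma(k\imath)\wedge\alpha_i$ and $[ev,ev]$, apply Lemma~\ref{lemma_lifting} (with the factor $k$ entering via $(k\imath)^*(x_3)=ku_3$), and convert $\alpha_i^*(x_{2j-3})$ to $\Sigma^{-2}c_{j-1}(\alpha_i)$ using $x_{2j-3}=\sigma(c_{j-1})$. The point you flag about extracting the scalar $k$ is handled exactly as you suggest, at the level of the map $k\imath:S^3\to SU(n)$ before any non-H-space target is involved.
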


\begin{proof}
For dimension and connectivity reasons, $\alpha_i:\Sigma^2C\to BSU(\infty)$ lifts through $BSU(n)\to BSU(\infty)$. Label the lift $\Sigma^2C\to BSU(n)$ by $\alpha_i$ as well. Let $\alpha'_i:\Sigma C\to SU(n)$ be the adjoint of $\alpha_i$. Then $(\partial'_k)_*(\alpha_i)$ is the adjoint of the composition
\[
\C\PP^2\wedge\Sigma C\overset{q\wedge\one}{\longrightarrow}\Sigma S^3\wedge\Sigma C\overset{\Sigma k\imath\wedge\alpha'_i}{\longrightarrow}\Sigma SU(n)\wedge SU(n)\overset{[ev,ev]}{\longrightarrow}BSU(n).
\]
By Lemma~\ref{lemma_lifting}, $(\partial'_k)_*(\alpha_i)$ has a lift $\tilde{\alpha}_{i,k}$ such that $\tilde{\alpha}_{i,k}^*(a_{2j})=ku^2\otimes\Sigma^{-1}(\alpha')^*(x_{2j-3})$. Since $\sigma(c_{j-1})=x_{2j-3}$, we have $\tilde{\alpha}_{i,k}^*(a_{2j})=ku^2\otimes\Sigma^{-2}c_{j-1}(\alpha_i)$ and
\[
a(\tilde{\alpha}_{i,k})=ku^2\otimes\Sigma^{-2}c_{n-1}(\alpha_i)\oplus ku^2\otimes\Sigma^{-2}c_{n}(\alpha_i).
\]
\end{proof}

By Lemma~\ref{lemma_lift of alpha}, $(\partial'_k)_*(\alpha_1)$ and $(\partial'_k)_*(\alpha_2)$ have lifts
\[
\begin{array}{c c c}
\tilde{\alpha}_{1,k}=(n-2)!ku^2v_{2n-4}+\frac{(n-1)!}{2}ku^2v_{2n-2}
&\text{and}
&\tilde{\alpha}_{2,k}=(n-1)!ku^2v_{2n-2}.
\end{array}
\]
We represent their coordinates by the matrix
\[
M_{\partial}=kL
\begin{pmatrix}
1	&0	&\frac{n-3}{4}\\
0	&0	&\frac{n-1}{2}
\end{pmatrix}.
\]
Let $\tilde{\partial}_k=span\{\tilde{\alpha}_{1,k}, \tilde{\alpha}_{2,k}\}$ be the preimage of $Im(\partial'_k)_*$ in $[\C\PP^2\wedge C,\Omega W_n]$. Then $\tilde{\partial}_k$ is spanned by the row vectors of $M_{\partial}$.

\begin{lemma}\label{lemma_im xi 4m+3}
When $n$ is odd, the order of $Im(\partial'_k)_*$ is
\[
|Im(\partial'_k)_*|=\frac{\frac{1}{2}n(n^2-1)}{(\frac{1}{2}n(n^2-1), k)}\cdot\frac{n}{(n,k)}.
\]
\end{lemma}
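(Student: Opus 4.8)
The plan is to convert $|Im(\partial'_k)_*|$ into a lattice index and evaluate it.

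First I locate $Im(\partial'_k)_*$ inside a cokernel I can compute. Since $\KK^1(\C\PP^2\wedge C)=0$, exact sequence~(\ref{exact seq_necessary CP2 C}) gives $[\C\PP^2\wedge C,SU(n)]\cong[\C\PP^2\wedge C,\Omega W_n]/Im(p_*)$, and since $a$ is injective and $\Phi=a\circ p_*$, this further induces an isomorphism $[\C\PP^2\wedge C,SU(n)]\cong Im(a)/Im(\Phi)$. By Lemma~\ref{lemma_lift of alpha} the generators $(\partial'_k)_*(\alpha_1),(\partial'_k)_*(\alpha_2)$ of $Im(\partial'_k)_*$ lift to $\tilde{\alpha}_{1,k},\tilde{\alpha}_{2,k}$, so under the isomorphism $Im(\partial'_k)_*$ corresponds to the subgroup of $Im(a)/Im(\Phi)$ generated by the classes of $a(\tilde{\alpha}_{1,k})$ and $a(\tilde{\alpha}_{2,k})$. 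By the second isomorphism theorem this gives
\[
|Im(\partial'_k)_*|=[a(\tilde{\partial}_k)+Im(\Phi):Im(\Phi)]=[a(\tilde{\partial}_k):a(\tilde{\partial}_k)\cap Im(\Phi)].
\]

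Next I reduce to a rank-$2$ computation. The matrix $M_{\partial}$ shows that $\tilde{\alpha}_{1,k}$ and $\tilde{\alpha}_{2,k}$ both have vanishing $uv_{2n-2}$-coordinate, so $a(\tilde{\partial}_k)$ lies in the rank-$2$ sublattice $V\subseteq Im(a)$ of elements with zero $uv_{2n-2}$-coordinate; hence $a(\tilde{\partial}_k)\cap Im(\Phi)=a(\tilde{\partial}_k)\cap(Im(\Phi)\cap V)$. Reading off $M_{\Phi}$, a combination $\sum_i c_i\Phi(L_i)$ lands in $V$ exactly when $c_1=-2c_2$; substituting this and simplifying — expressing the $\Phi(L_i)$ in terms of $\sigma_1:=\tilde{\alpha}_{1,1}$ and $\sigma_2:=\tilde{\alpha}_{2,1}$ — yields
\[
Im(\Phi)\cap V=\langle\, n(n-1)\sigma_1+n\sigma_2,\ \tfrac{1}{2}n(n+1)\sigma_2\,\rangle,
\]
a sublattice of $\Lambda:=\langle\sigma_1,\sigma_2\rangle$, with every coefficient an integer because $n$ is odd. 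Using $\sigma_1,\sigma_2$ as a basis of $\Lambda$, and noting $a(\tilde{\partial}_k)=k\Lambda\subseteq V$ and $\Gamma:=Im(\Phi)\cap V\subseteq\Lambda$, we obtain
\[
|Im(\partial'_k)_*|=[k\Lambda:k\Lambda\cap\Gamma]=[k\Lambda+\Gamma:\Gamma]=|k(\Lambda/\Gamma)|.
\]

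Finally I compute $\Lambda/\Gamma$ by the Smith normal form of the matrix with rows $(n(n-1),n)$ and $(0,\tfrac{1}{2}n(n+1))$: its first elementary divisor is $\gcd(n(n-1),\,n,\,\tfrac{1}{2}n(n+1))=n$ (this is where $n$ odd is used) and its second is $\tfrac{1}{2}n^2(n^2-1)/n=\tfrac{1}{2}n(n^2-1)$, so $\Lambda/\Gamma\cong\Z/n\oplus\Z/\tfrac{1}{2}n(n^2-1)$. Since $|k(\Z/m)|=m/(m,k)$ for each cyclic summand,
\[
|Im(\partial'_k)_*|=\frac{n}{(n,k)}\cdot\frac{\frac{1}{2}n(n^2-1)}{(\frac{1}{2}n(n^2-1),k)},
\]
which is the assertion of the lemma. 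The main obstacle is the middle paragraph: keeping the change to the basis $(1,0,1),(0,1,1),(0,0,2)$ of $Im(a)$ straight while intersecting $Im(\Phi)$ with $V$, and verifying that every coefficient produced is integral — this integrality is precisely the place where the hypothesis that $n$ is odd enters. The remaining steps are routine lattice arithmetic.
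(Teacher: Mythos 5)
Your proof is correct; I verified the key middle step: the three generators of $Im(\Phi)\cap V$, namely $-2\Phi(L_1)+\Phi(L_2)=-n(n-1)\sigma_1+\tfrac{1}{2}n(n-1)\sigma_2$, $\Phi(L_3)=n(n-1)\sigma_1+n\sigma_2$ and $\Phi(L_4)=n(n+1)\sigma_2$, do reduce to the two you state, with all coefficients integral precisely because $n$ is odd. The framework is the same as the paper's --- both pass through the injective map $a$, Theorem~\ref{thm_Phi}, and the lifts supplied by Lemma~\ref{lemma_lift of alpha}, and both end in a Smith-normal-form computation --- but your execution of the final lattice arithmetic is genuinely different and cleaner. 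The paper puts the full $4\times3$ matrix $M_{\Phi}$ into Smith normal form inside the rank-$3$ lattice $Im(a)$, which forces an explicit change of basis, a separate congruence-counting argument for the cosets hit by $\tilde{\partial}_k$, and a case split between $n=4m+3$ and $n=4m+1$. By first observing that $a(\tilde{\partial}_k)$ lies in the rank-$2$ sublattice $V$ with vanishing $uv_{2n-2}$-coordinate and replacing $Im(\Phi)$ by $Im(\Phi)\cap V$, you reduce everything to the index $|k(\Lambda/\Gamma)|$ for a single $2\times2$ matrix, uniformly in odd $n$, and the two cyclic factors $\Z/n$ and $\Z/\tfrac{1}{2}n(n^2-1)$ appear transparently. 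The only point worth flagging is notational: you set $\sigma_i=\tilde{\alpha}_{i,1}$ but then use $\sigma_i$ for the cohomology class $a(\tilde{\alpha}_{i,1})$; since $a$ is injective this is harmless, but it should be said.
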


\begin{proof}
Suppose $n=4m+3$ for some integer $m$. Then
\[
M_{\Phi}=(4m+3)L
\begin{pmatrix}
2m+1	&2m+1	&2m^2+m\\
0		&4m+2	&4m^2+4m+1\\
4m+2	&0		&4m^2+4m+1\\
0		&0		&8m^2+12m+4
\end{pmatrix}
\]
and
\[
M_{\partial}=kL
\begin{pmatrix}
1	&0	&m\\
0	&0	&2m+1
\end{pmatrix}.
\]
Transform $M_{\Phi}$ into Smith normal form
\[
A\cdot M_{\Phi}\cdot B=(4m+3)L
\begin{pmatrix}
(2m+1)	&	&\\
	&(2m+1)	&\\
	&	&(2m+1)(4m+4)\\
	&	&0
\end{pmatrix},
\]
where
\[
\begin{array}{c c c}
A=
\begin{pmatrix}
1		&0		&0			&0\\
-2		&0		&1			&0\\
4m+2	&1		&-(2m+1)	&0\\
4		&-2		&-2			&1
\end{pmatrix}
&\text{and}
&B=
\begin{pmatrix}
1	&-m	&-(2m+1)\\
0	&0	&1\\
0	&1	&2
\end{pmatrix}.
\end{array}
\]
The matrix $B$ represents a basis change in $Im(a)$ and $A$ represents a basis change in $Im(\Phi)$. Therefore $[\C\PP^2\wedge C, SU(n)]$ is isomorphic to
\[
\frac{\Z}{\frac{1}{2}(4m+3)!\Z}\oplus\frac{\Z}{\frac{1}{2}(4m+3)!\Z}\oplus\frac{\Z}{\frac{1}{2}(4m+4)!\Z}.
\]
We need to find the representation of $\tilde{\partial}_k$ under the new basis represented by $B$. The new coordinates of $\tilde{\alpha}_{1,k}$ and $\tilde{\alpha}_{2,k}$ are the row vectors of the matrix
\[
M_{\partial}
\cdot
\begin{pmatrix}
1	&-m	&-(2m+1)\\
0	&0	&1\\
0	&1	&2
\end{pmatrix}
=
\begin{pmatrix}
kL	&0			&-kL\\
0	&(2m+1)kL	&(4m+2)kL
\end{pmatrix}.
\]
Apply row operations to get
\[
\begin{pmatrix}
1		&0\\
4m+2	&1
\end{pmatrix}
\cdot
\begin{pmatrix}
kL	&0			&-kL\\
0	&(2m+1)kL	&(4m+2)kL
\end{pmatrix}
=
\begin{pmatrix}
kL			&0			&-kL\\
(4m+2)kL	&(2m+1)kL	&0
\end{pmatrix}.
\]
Let $\mu=(kL,0,-kL)$ and $\nu=((4m+2)kL, (2m+1)kL,0)$. Then
\[
\tilde{\partial}_k=\{x\mu+y\nu\in[\C\PP^2\wedge C,\Omega W_n]|x, y\in\Z\}.
\]
If $x\mu+y\nu$ and $x'\mu+y'\nu$ are the same in $Im(\Phi)$, then we have
\[\left\{
\begin{array}{r c l l}
xkL+(4m+2)ykL	&\equiv &x'kL+(4m+2)y'kL	&\pmod{(2m+1)(4m+3)L}\\
(2m+1)ykL		&\equiv	&(2m+1)y'kL			&\pmod{(2m+1)(4m+3)L}\\
xkL				&\equiv	&x'kL				&\pmod{(2m+1)(4m+3)(4m+4)L}
\end{array}\right.
\]
These conditions are equivalent to
\[\left\{
\begin{array}{r c l l}
xk	&\equiv &x'k	&\pmod{(2m+2)(4m+3)(4m+2)}\\
yk	&\equiv	&y'k	&\pmod{(4m+3)}
\end{array}\right.
\]
This implies that there are $\displaystyle{\frac{(2m+2)(4m+3)(4m+2)}{((2m+2)(4m+3)(4m+2), k)}}$ distinct values of $x$ and $\displaystyle{\frac{4m+3}{(4m+3,k)}}$ distinct values of $y$, so we have
\[
|Im(\partial'_k)_*|=\frac{(2m+2)(4m+3)(4m+2)}{((2m+2)(4m+3)(4m+2), k)}\cdot\frac{4m+3}{(4m+3,k)}.
\]

When $n=4m+1$, we can repeat the calculation above to obtain
\[
|Im(\partial'_k)_*|=\frac{2m(4m+2)(4m+1)}{(2m(4m+2)(4m+1), k)}\cdot\frac{4m+1}{(4m+1,k)}.
\]
\end{proof}

\subsection{The order of $Im(\partial'_k)_*$ for $n$ even}

When $n$ is even, $C$ is $S^{2n-2}\vee S^{2n-4}$. For $1\leq i\leq4$, let $L_i$ be the generators of $\KK^0(\C\PP^2\wedge C)\cong\Z^{\oplus 4}$ with Chern characters
\[
\begin{array}{l l}
ch(L_1)=(u+\frac{1}{2}u^2)v_{2n-4}
&ch(L_2)=u^2v_{2n-4}\\[15pt]
ch(L_3)=(u+\frac{1}{2}u^2)v_{2n-2}
&ch(L_4)=u^2v_{2n-2}.
\end{array}
\]
By Theorem~\ref{thm_Phi}, we have
\begin{eqnarray*}
\Phi(L_1)&=&\frac{n!}{2}u^2v_{2n-4}\\
\Phi(L_2)&=&n!u^2v_{2n-4}\\
\Phi(L_3)&=&n!uv_{2n-2}+\frac{(n+1)!}{2}u^2v_{2n-2}\\
\Phi(L_4)&=&(n+1)!u^2v_{2n-2}.
\end{eqnarray*}

By Lemma~\ref{lemma_im lambda}, $Im(a)$ is spanned by $(1, 0, 0), (0, 2, 0)$ and $(0, 0, 1)$. Under this basis, the coordinates of the $\Phi(L_i)$'s are
\[
\begin{array}{l l}
\Phi(L_1)=(\frac{n!}{2},0,0),
&\Phi(L_2)=(n!,0,0),\\[15pt]
\Phi(L_3)=(0,\frac{n!}{2},\frac{(n+1)!}{2}),
&\Phi(L_4)=(0,0,(n+1)!).
\end{array}
\]
We represent the coordinates of $\Phi(L_i)$'s by the matrix
\[
M_{\Phi}=\frac{n(n-1)}{2}L
\begin{pmatrix}
1	&0	&0\\
2	&0	&0\\
0	&1	&n+1\\
0	&0	&2n+2
\end{pmatrix}
\]
Then $Im(\Phi)$ is spanned by the row vectors of $M_{\Phi}$.

In exact sequence~(\ref{exact seq_necessary Sigma C}) $\KK^0(\Sigma^2C)$ is $\Z\oplus\Z$. Let $\alpha_1$ and $\alpha_2$ be its generators with Chern classes
\[
\begin{array}{l l}
c_{n-1}(\alpha_1)=(n-2)!\Sigma^2v_{2n-4}	&c_n(\alpha_1)=0\\
c_{n-1}(\alpha_2)=0							&c_n(\alpha_2)=(n-1)!\Sigma^2v_{2n-2}.
\end{array}
\]
By Lemma~\ref{lemma_lift of alpha}, $(\partial'_k)_*(\alpha_1)$ and $(\partial'_k)_*(\alpha_2)$ have lifts
\[
\begin{array}{c c c}
\tilde{\alpha}_{1, k}=(n-2)!ku^2v_{2n-4}
&\text{and}
&\tilde{\alpha}_{2, k}=(n-1)!ku^2v_{2n-2}.
\end{array}
\]
We represent their coordinates by a matrix
\[
M_{\partial}=kL
\begin{pmatrix}
1	&0	&0\\
0	&0	&n-1
\end{pmatrix}.
\]
Then the preimage $\tilde{\partial}_k=span\{\tilde{\alpha}_{1, k}, \tilde{\alpha}_{2, k}\}$ of $Im(\partial'_k)_*$ is spanned by the row vectors of $M_{\partial}$. We calculate as in the proof of Lemma~\ref{lemma_im xi 4m+3} to obtain the following lemma.

\begin{lemma}\label{lemma_im xi even}
When $n$ is even, the order of $Im(\partial'_k)_*$ is
\[
|Im(\partial'_k)_*|=\frac{\frac{1}{2}n(n-1)}{(\frac{1}{2}n(n-1), k)}\cdot\frac{n(n+1)}{(n(n+1),k)}.
\]
\end{lemma}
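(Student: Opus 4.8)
The plan is to carry out the same Smith-normal-form computation as in the proof of Lemma~\ref{lemma_im xi 4m+3}, which is in fact shorter here because the matrices involved are small. By exactness in~(\ref{exact seq_necessary CP2 C}) together with the vanishing $[\C\PP^2\wedge C, SU(\infty)]\cong\KK^1(\C\PP^2\wedge C)=0$, the group $[\C\PP^2\wedge C, SU(n)]$ is $Coker(p_*)$; since $\Phi=a\circ p_*$ and $a$ is injective, $Im(\partial'_k)_*$ is identified with the image of the preimage $\tilde\partial_k$ in $Im(a)/Im(\Phi)$. So it suffices to compute $\bigl|(\tilde\partial_k+Im(\Phi))/Im(\Phi)\bigr|$ inside $Im(a)\cong\Z^{\oplus3}$, using the coordinates of Lemma~\ref{lemma_im lambda} in which $Im(\Phi)$ is the row span of $M_\Phi$ and $\tilde\partial_k$ is the row span of $M_\partial$.

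First I would diagonalise $M_\Phi$. Subtracting twice the first row from the second kills it, and the column operation $C_3\mapsto C_3-(n+1)C_2$ clears the remaining off-diagonal entry, so there are $A\in GL_4(\Z)$ and $B\in GL_3(\Z)$ with
\[
A\,M_\Phi\,B=\tfrac12 n(n-1)L\cdot\mathrm{diag}(1,1,2n+2)
\]
(together with an appended zero row). Using $\tfrac12 n(n-1)(2n+2)=n(n^2-1)$, this yields
\[
Im(a)/Im(\Phi)\;\cong\;\frac{\Z}{\tfrac12 n(n-1)L\,\Z}\;\oplus\;\frac{\Z}{\tfrac12 n(n-1)L\,\Z}\;\oplus\;\frac{\Z}{n(n^2-1)L\,\Z}
\]
in the basis of $Im(a)$ determined by $B$, which also recovers the stated decomposition of $[\C\PP^2\wedge C,SU(n)]$.

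Next I would transport $\tilde\partial_k$ through the same change of basis. Since the first two columns of the rows of $M_\partial$ vanish, $M_\partial B=kL\begin{pmatrix}1&0&0\\0&0&n-1\end{pmatrix}$, so in the new coordinates $\tilde\partial_k$ is spanned by $(kL,0,0)$ and $(0,0,(n-1)kL)$. These two vectors are supported on disjoint cyclic summands of $Im(a)/Im(\Phi)$, hence the order of the subgroup they generate is the product of the two individual orders. Cancelling the common factor $L$, the first vector has order $\tfrac12 n(n-1)/(\tfrac12 n(n-1),k)$; for the second, cancelling $(n-1)L$ and using $n(n^2-1)=(n-1)n(n+1)$ gives order $n(n+1)/(n(n+1),k)$. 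Multiplying these yields exactly the claimed formula.

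As in Lemma~\ref{lemma_im xi 4m+3}, the only delicate point is bookkeeping: one must keep the column change of basis on $Im(a)\cong\Z^{\oplus3}$ consistent with the row operations on $\KK^0(\C\PP^2\wedge C)\cong\Z^{\oplus4}$, and then check that the two spanning vectors of $\tilde\partial_k$ genuinely land in independent summands, so that the orders multiply rather than merely bound the answer. I do not expect any genuinely hard step; the freeness of $[\C\PP^2\wedge C,\Omega W_n]$ from Lemma~\ref{lemma_[CP CP] is free} is precisely what makes the $n$ even case tractable here, in contrast to the $S^4$ situation.
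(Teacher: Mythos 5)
Your proposal is correct and follows exactly the route the paper intends: the paper's own ``proof'' of this lemma is the single sentence that one repeats the Smith-normal-form calculation of Lemma~\ref{lemma_im xi 4m+3}, and your computation (diagonalising $M_\Phi$ to $\tfrac12 n(n-1)L\cdot\mathrm{diag}(1,1,2n+2)$, transporting $M_\partial$ through the basis change, and multiplying the orders $\tfrac12 n(n-1)/(\tfrac12 n(n-1),k)$ and $n(n+1)/(n(n+1),k)$ coming from the independent summands) is precisely that calculation carried out explicitly. The key cancellation $\gcd((n-1)n(n+1),(n-1)k)=(n-1)\,(n(n+1),k)$ is handled correctly, so nothing is missing.
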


\subsection{Proof of Theorem~\ref{thm_necessary condition}}

Before comparing the orders of $Im(\partial'_k)_*$ and $Im(\partial'_k)_*$, we prove a preliminary lemma.

\begin{lemma}\label{lemma_gcd prime argument}
Let $n$ be an even number and let $p$ be a prime. Denote the $p$-component of $t$ by $\nu_p(t)$. If there are integers $k$ and $l$ such that
\[
\nu_p(\frac{1}{2}n,k)\cdot\nu_p(n,k)=\nu_p(\frac{1}{2}n,l)\cdot\nu_p(n,l),
\]
then $\nu_p(n, k)=\nu_p(n,l)$.
\end{lemma}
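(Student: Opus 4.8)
The statement is elementary number theory, so the plan is to analyze the $p$-adic valuations directly. Write $a=\nu_p(n)$, so that $\nu_p(\tfrac12 n)=a$ when $p$ is odd and $\nu_p(\tfrac12 n)=a-1$ when $p=2$ (here $a\geq 1$ since $n$ is even). For any integer $t$, set $b=\nu_p(t)$; then $\nu_p(n,t)=\min(a,b)$ and $\nu_p(\tfrac12 n,t)=\min(a,b)$ for $p$ odd, while $\nu_p(\tfrac12 n,t)=\min(a-1,b)$ for $p=2$.

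First I would dispose of the odd prime case: if $p$ is odd, then $\nu_p(\tfrac12 n,t)=\nu_p(n,t)$ for every $t$, so the hypothesis reads $\nu_p(n,k)^2=\nu_p(n,l)^2$, and since these are non-negative integers we immediately get $\nu_p(n,k)=\nu_p(n,l)$. Here I should be slightly careful about what ``$p$-component'' and the product mean --- I read $\nu_p(t)$ as $p^{\,\mathrm{ord}_p(t)}$ (the actual $p$-primary factor, consistent with the notation $\nu_p(a)$ used in Theorem~\ref{thm_counting lemma S4}), so the equation is between powers of $p$ and cancellation is still valid because both sides are positive powers of $p$; either reading gives the same conclusion.

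The substantive case is $p=2$. Write $k$-side exponents $\beta=\nu_2(k)$, $\gamma=\nu_2(l)$ (meaning $2^{\beta}\|k$, etc.), and recall $a=\nu_2(n)\geq 1$. The hypothesis becomes
\[
\min(a-1,\beta)+\min(a,\beta)=\min(a-1,\gamma)+\min(a,\gamma)
\]
(in additive/exponent form). Define $g(x)=\min(a-1,x)+\min(a,x)$ for $x\geq 0$; the goal is to show $g(\beta)=g(\gamma)$ forces $\min(a,\beta)=\min(a,\gamma)$. The key observation is that $g$ is non-decreasing and its jumps are controlled: $g(x)=2x$ for $0\le x\le a-1$, $g(a-1)=2a-2$, $g(a)=2a-1$, and $g(x)=2a-1$ for all $x\ge a$. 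So $g$ is strictly increasing on $[0,a-1]$, takes the single new value $2a-1$ at $x=a$, and is constant thereafter; in particular $g$ is injective on $\{0,1,\dots,a-1,a\}$ and the only non-injectivity is that all $x\ge a$ share the value $2a-1$. Hence $g(\beta)=g(\gamma)$ implies either $\beta=\gamma$ (both $\le a$, or trivially equal) or both $\beta,\gamma\ge a$; in every case $\min(a,\beta)=\min(a,\gamma)$, i.e. $\nu_2(n,k)=\nu_2(n,l)$.

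The main obstacle is purely bookkeeping: getting the $p=2$ correction to $\nu_p(\tfrac12 n)$ right and then checking the handful of cases in the piecewise description of $g$; there is no conceptual difficulty. I would present the odd case in one line, then tabulate $g$ on the four relevant ranges ($x<a-1$, $x=a-1$, $x=a$, $x>a$) and read off injectivity, concluding $\min(a,\beta)=\min(a,\gamma)$ and hence $\nu_p(n,k)=\nu_p(n,l)$.
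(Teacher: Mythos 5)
Your proof is correct and follows essentially the same route as the paper: reduce to exponents, observe that for odd $p$ the hypothesis becomes $\nu_p(n,k)^2=\nu_p(n,l)^2$, and for $p=2$ analyse $\min(a-1,x)+\min(a,x)$, where the decisive point is the same parity obstruction (the value $2a-1$ at $x\geq a$ is odd while the values $2x$ for $x\leq a-1$ are even). Your packaging of the case analysis as injectivity of the function $g$ on $\{0,\dots,a\}$ is a slightly tidier presentation of the paper's four explicit cases, but the argument is the same.
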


\begin{proof}
Suppose $p$ is odd. If $p$ does not divide $n$, then $\nu_p(n,k)=\nu_p(n,l)=1$, so the lemma holds. If $p$ divides $n$, then $\nu_p(\frac{1}{2}n,k)=\nu_p(n,k)$. The hypothesis becomes $\nu_p(n,k)^2=\nu_p(n,l)^2$, implying that $\nu_p(n, k)=\nu_p(n,l)$.

Suppose $p=2$. Let $\nu_2(n)=2^r$, $\nu_2(k)=2^t$ and $\nu_2(l)=2^s$. Then the hypothesis implies
\begin{equation}\label{eqn_nu lemma}
min(r-1,t)+min(r,t)=min(r-1,s)+min(r,s).
\end{equation}
To show $\nu_2(n,k)=\nu_2(n,l)$, we need to show $min(r,t)=min(r,s)$. Consider the following cases: (1)~$t,s\geq r$, (2)~$t,s\leq r-1$, (3)~$t\leq r-1,s\geq r$ and (4)~$s\leq r-1,t\geq r$.

Case (1) obviously gives $min(r,t)=min(r,s)$. In case (2), when $t,s\leq r-1$, equation~(\ref{eqn_nu lemma}) implies $2t=2s$. Therefore $t=s$ and $min(r,t)=min(r,s)$.

It remains to show cases (3) and (4). For case (3) with $t\leq r-1,s\geq r$, equation~(\ref{eqn_nu lemma}) implies
\[
2t=min(r-1, s)+r.
\]
Since $s\geq r$, $min(r-1,s)=r-1$ and the right hand side is $2r-1$ which is odd. However, the left hand side is even, leading to a contradiction. This implies that this case does not satisfy the hypothesis. Case (4) is similar. Therefore $\nu_2(n, k)=\nu_2(n, l)$ and the asserted statement follows.
\end{proof}

\begin{proof}[Proof of Theorem~\ref{thm_necessary condition}]
In exact sequence~(\ref{exact seq_necessary Sigma C}), $[C,\G_k(\C\PP^2)]$ is $Coker(\partial'_k)_*$. By hypothesis,~$\G_k(\C\PP^2)$ is homotopy equivalent to $\G_l(\C\PP^2)$, so $|Im(\partial'_k)_*|=|Im(\partial'_k)_*|$. The $n$ odd and $n$ even cases are proved similarly, but the even case is harder.

When $n$ is even, by Lemma~\ref{lemma_im xi even} the order of $Im(\partial'_k)_*$ is
\[
|Im(\partial'_k)_*|=\frac{\frac{1}{2}n(n-1)}{(\frac{1}{2}n(n-1),k)}\cdot\frac{n(n+1)}{(n(n+1),k)},
\]
so we have
\begin{equation}\label{eqn_necessary pf condition even}
(\frac{1}{2}n(n-1),k)\cdot(n(n+1),k)=(\frac{1}{2}n(n-1),l)\cdot(n(n+1),l).
\end{equation}
We need to show that 
\begin{equation}\label{eqn_necessry pf target even}
\nu_p(n(n^2-1),k)=\nu_p(n(n^2-1),l)
\end{equation}
for all primes $p$. Suppose $p$ does not divide $\frac{1}{2}n(n^2-1)$. Equation~(\ref{eqn_necessry pf target even}) holds since both sides are 1. Suppose $p$ divides $\frac{1}{2}n(n^2-1)$. Since $n-1$, $n$ and $n+1$ are coprime, $p$ divides only one of them. If $p$ divides $n-1$, then $\nu_p(\frac{1}{2}n,k)=\nu_p(n,k)=\nu_p(n+1,k)=1$. Equation~(\ref{eqn_necessary pf condition even}) implies $\nu_p(n-1,k)=\nu_p(n-1,l)$. Since
\[
\nu_p(n(n^2-1),k)=\nu_p(n-1,k)\cdot\nu_p(n,k)\cdot\nu_p(n+1,k),
\]
this implies equation~(\ref{eqn_necessry pf target even}) holds. If $p$ divides $n+1$, then equation~(\ref{eqn_necessry pf target even}) follows from a similar argument. If $p$ divides $n$, then equation~(\ref{eqn_necessary pf condition even}) implies $\nu_p(\frac{1}{2}n,k)\cdot\nu_p(n,k)=\nu_p(\frac{1}{2}n,l)\cdot\nu_p(n,l)$. By Lemma~\ref{lemma_gcd prime argument} $\nu_p(n, k)=\nu_p(n,l)$, so equation~(\ref{eqn_necessry pf target even}) holds.

When $n$ is odd, by Lemma~\ref{lemma_im xi 4m+3} the order of $Im(\partial'_k)_*$ is
\[
|Im(\partial'_k)_*|=\frac{\frac{1}{2}n(n^2-1)}{(\frac{1}{2}n(n^2-1),k)}\cdot\frac{n}{(n,k)},
\]
so we have
\[
(\frac{1}{2}n(n^2-1),k)\cdot(n,k)=(\frac{1}{2}n(n^2-1),l)\cdot(n,l).
\]
We can argue as above to show that for all primes $p$,
\[
\nu_p(\frac{1}{2}n(n^2-1),k)=\nu_p(\frac{1}{2}n(n^2-1),l).
\]
\end{proof}

\end{document}